\theoremstyle{thmstyleone}%
\newtheorem{theorem}{Theorem}
\newtheorem{proposition}[theorem]{Proposition}%
\newtheorem{corollary}[theorem]{Corollary}
\newtheorem{lemma}[theorem]{Lemma}
\theoremstyle{thmstyletwo}%
\newtheorem{conjecture}{Conjecture}
\theoremstyle{thmstylethree}%
\newenvironment{customthm}[1]
  {\innercustomthm}
  {\endinnercustomthm}
\newenvironment{customprop}[1]
  {\innercustomprop}
  {\endinnercustomprop}
\newcommand{\Proj}{\text{Proj}}
\newcommand{\cdeg}{\text{c-}\deg}
\newcommand{\fqb}{\overline{\mathbb{F}}_q}
\newcommand{\pmfqb}{\PPP^m(\fqb)}
\newcommand{\amfqb}{\mathbb{A}^m(\fqb)}
\newcommand{\pmfq}{\PPP^m(\fq)}
\newcommand{\N}{\mathbb{N}}
\newcommand{\PPP}{\mathbb{P}}
\newcommand{\F}{\mathbb{F}}
\newcommand{\fq}{\F_q}
\newcommand{\HH}{\mathcal{H}}
\newcommand{\im}{\text{Im}}
\newcommand{\Spec}{\text{Spec}}
\newcommand{\A}{\mathbb{A}}
\begin{document}

\title[On a conjecture of Beelen, Datta and Ghorpade]{On a conjecture of Beelen, Datta and Ghorpade for the number of points of varieties over finite fields}


\author*[1]{\fnm{Deepesh} \sur{Singhal}\orcid{https://orcid.org/0000-0003-1317-5523}}\email{singhald@uci.edu}
\equalcont{These authors contributed equally to this work.}

\author[2]{\fnm{Yuxin} \sur{Lin}\orcid{https://orcid.org/0000-0001-9230-7728}}\email{yuxinlin@caltech.edu}
\equalcont{These authors contributed equally to this work.}

\affil*[1]{\orgdiv{Department of Mathematics}, \orgname{University of California Irvine}, \orgaddress{\street{Rowland Hall}, \city{Irvine}, \postcode{92697}, \state{California}, \country{USA}}}

\affil[2]{\orgdiv{Department of Mathematics}, \orgname{California Institute of Technology}, \orgaddress{\street{1200 E California Blvd}, \city{Pasadena}, \postcode{91125}, \state{California}, \country{USA}}}


\abstract{Consider a finite field $\fq$ and positive integers $d,m,r$ with $1\leq r\leq \binom{m+d}{d}$. Let $S_d(m)$ be the $\fq$ vector space of all homogeneous polynomials of degree $d$ in $X_0,\dots,X_m$. Let $e_r(d,m)$ be the maximum number of $\fq$-rational points in the vanishing set of $W$ as $W$ varies through all subspaces of $S_d(m)$ of dimension $r$. Beelen, Datta, and Ghorpade conjectured an exact formula of $e_r(d,m)$ when $q\geq d+1$. We prove that their conjectured formula is true when $q$ is sufficiently large in terms of $m$, $d$, $r$. The problem of determining $e_r(d,m)$ is equivalent to the problem of computing the $r^{th}$ generalized Hamming weight of the projective Reed-Muller code $PRM_q(d,m)$. It is also equivalent to the problem of determining the maximum number of points on sections of Veronese varieties by linear subvarieties of codimension $r$.}

\keywords{Projective Reed-Muller codes, Generalized Hamming weights, Beelen-Datta-Ghorpade conjecture, Boguslavsky-Tsfasman conjecture}


\pacs[MSC Classification]{14N10, 14G15, 11G25, 14G05, 11H71, 11T06, 05E14}

\maketitle

\section{Introduction}
We begin by introducing some notation that we will use throughout this paper. Our first goal is to introduce the terminology necessary to state the conjecture of Beelen, Datta, and Ghorpade \cite{beelen2022combinatorial} that is the primary motivation for our work.

Let $\N$ be the set of nonnegative integers. We fix a finite field $\fq$. When we say that $X$ is an (affine) variety, we mean that $X$ is an irreducible projective (affine) variety defined over $\fqb$. When we say $X$ is an (affine) algebraic set, we mean that it is a projective (affine) algebraic set defined over $\fqb$. The set of $\fq$-rational points of $X$ will be denoted by $X(\fq)$. When we say $X$ is irreducible, we mean that it is irreducible over $\fqb$.
For a homogeneous ideal $I$ of $\fqb[X_0,\dots,X_{m+1}]$, we denote its vanishing set by $V(I)\subseteq \pmfqb$. For an ideal of $\fqb[x_1,\dots,x_m]$, we denote its zero set by $Z(I)\subseteq\amfqb$. We denote $\pi_m=|\pmfq|=\sum_{i=0}^m q^i$. Note that $\pi_0=1$ and we set $\pi_m=0$ for $m<0$.
For an algebraic set $X$, Lachaud and Rolland in \cite{lachaud2015number} define $\deg_i(X)$ to be the sum of the degrees of the irreducible components of $X$ having dimension $i$, and define $\cdeg(X)$ to be the sum of the degrees of all the irreducible components of $X$. In particular, if $k=\dim(X)$, then $\deg(X)=\deg_k(X)$ and $\cdeg(X)=\sum_{i=0}^{k}\deg_i(X)$.

Let $S(m)=\fq[x_0,\dots,x_m]$ and denote by $S_d(m)$ its $d^{th}$ graded component. Given a positive integer $r$ satisfying $r \leq \binom{m+d}{d}=\dim_{\fq}(S_d)$, Beelen, Datta, and Ghorpade in \cite{beelen2018maximum} define $e_r(d,m)$ as the maximal number of $\fq$-rational points among all projective algebraic sets defined by $r$ many degree $d$ homogeneous polynomials with $m+1$ variables. More precisely,
$$e_r(d,m)=\max\{|V(F_1,\dots,F_r)(\fq)| \mid F_1,\dots, F_r \in S_d(m)  \text{ are linearly independent}\}.$$
Let $T(m)=\fq[x_1, \dots, x_m]$ and $T_{\leq d}(m)$ be the degree $\leq d$ part. They analogously define $e_r^{\A}(d,m)$ as the maximal number of zeros of an affine variety defined by $r$ many polynomials of degree at most $d$ and $m$ variables. That is,
$$e_r^{\A}(d,m)=\max\{|Z(f_1,\dots,f_r)(\fq)| \mid f_1,\dots, f_r \in T_{\leq d}(m) \text{ are linearly independent}\}.$$
Beelen, Datta and Ghorpade in \cite{beelen2018maximum} define the set $\Omega(d,m)$,
$$\Omega(d,m)=\Big\{(\gamma_1,\dots,\gamma_{m+1})\in \N^{m+1}\mid \sum_{i=1}^{m+1} \gamma_i=d\Big\}.$$
For $1\leq r\leq \tbinom{m+d}{d}$, let $\omega_r(d,m)=(\beta_1,\dots,\beta_{m+1})$ be the $r^{th}$ largest element in $\Omega(d,m)$ under the lexicographical ordering. Then, they define the quantity
$$H_r(d,m)=\sum_{i=1}^m \beta_iq^{m-i}.$$
When $r=0$, set $H_0(d,m)=q^m$. When $r>\tbinom{m+d}{d}$, set $H_r(d,m)=0$.

Heijnen and Pellikaan in \cite{heijnen1998generalized} obtain an exact formula for $e_r^{\A}(d,m)$.
\begin{theorem}\cite[Theorem 6.8]{heijnen1998generalized}\label{Affine result}
Given $m,d\geq 1$, $1 \leq r \leq \binom{m+d}{d}$ and $q\geq d+1$, we have
$$e_r^{\A}(d,m)=H_r(d,m).$$
\end{theorem}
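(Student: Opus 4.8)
The plan is to establish the two bounds $e_r^{\A}(d,m)\le H_r(d,m)$ and $e_r^{\A}(d,m)\ge H_r(d,m)$ separately. For the upper bound I would use a Gr\"obner/footprint argument. Fix the monomial order refining the lexicographic order used to define $\omega_r(d,m)$, and note that since every monomial of degree $\le d$ has each exponent at most $d\le q-1$, distinct such monomials restrict to linearly independent functions on $\A^m(\fq)$. Given linearly independent $f_1,\dots,f_r\in T_{\le d}(m)$, row reduction on leading terms lets me assume their leading monomials $x^{a^{(1)}},\dots,x^{a^{(r)}}$ are pairwise distinct and of degree $\le d$. Adjoining the field equations $x_i^q-x_i$ and letting $I$ be the ideal they generate together with the $f_j$, this ideal is radical with finite $\fq$-rational zero set, so $|V(f_1,\dots,f_r)(\fq)|=\dim_{\fq}\fq[x]/I$ equals the number of standard monomials. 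Since $\mathrm{in}(I)$ contains $x^{a^{(1)}},\dots,x^{a^{(r)}}$ and $x_1^q,\dots,x_m^q$, this is at most the number $N(a^{(1)},\dots,a^{(r)})$ of monomials in the box $B=\{0,\dots,q-1\}^m$ divisible by none of the $x^{a^{(j)}}$.

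It then remains to show that $\max N=H_r(d,m)$ as the $a^{(j)}$ range over $r$-subsets of degree-$\le d$ exponent vectors, and that the maximum is attained by the $r$ lexicographically largest monomials, namely the vectors obtained from $\omega_1(d,m),\dots,\omega_r(d,m)$ by deleting the last coordinate. A single $x^a$ forbids the up-set $\{b\in B:b\ge a\}$ of size $\prod_i(q-a_i)$; for $r=1$, concavity of $a\mapsto\log(q-a)$ forces $\prod_i(q-a_i)$ to be minimized at the extreme point $a=(d,0,\dots,0)$, giving $N=dq^{m-1}=H_1(d,m)$. For general $r$ I would run an induction on $m$ driven by the recursion $H_r(d,m)=\beta_1q^{m-1}+H_{r'}(d-\beta_1,m-1)$, where $(\beta_1,\dots,\beta_{m+1})=\omega_r(d,m)$ and $r'$ is the lexicographic rank of $(\beta_2,\dots,\beta_{m+1})$ inside $\Omega(d-\beta_1,m-1)$; this mirrors slicing $\A^m(\fq)$ along the hyperplanes $x_1=c$, where $\beta_1$ slices can be taken to be full copies of $\A^{m-1}(\fq)$ and the remaining slices reduce the problem to $m-1$ variables. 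Proving that the lex-largest configuration is \emph{globally} optimal---ruling out every spread-out choice of leading monomials uniformly in $q$---is the step I expect to be the main obstacle; the base case $m=1$ is easy, since there $e_r^{\A}(d,1)=d-r+1$ follows from the fact that an $r$-dimensional space of polynomials of degree $\le d$ in one variable has a gcd of degree $\le d-r+1$.

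For the lower bound I would make the footprint estimate tight by an explicit construction. Writing $\omega_r(d,m)=(\beta_1,\dots,\beta_{m+1})$, choose for each $i\le m$ a subset $A_i\subseteq\fq$ with $|A_i|=\beta_i$ and an element $c_i\in\fq\setminus A_i$, which is possible because $\beta_i\le d\le q-1$. The set $Z^{*}=\bigsqcup_{i=1}^m\{b:b_1=c_1,\dots,b_{i-1}=c_{i-1},\ b_i\in A_i\}$ is a disjoint union whose $i$-th piece has $\beta_iq^{m-i}$ points, so $|Z^{*}|=H_r(d,m)$. I would then take the $r$ polynomials to be suitably nested products of linear forms $x_i-\lambda$ whose leading monomials are exactly the $r$ lex-largest degree-$\le d$ monomials; these are linearly independent, have degree $\le d$, and one checks that their common zero locus is precisely $Z^{*}$. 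For such polynomials the standard monomials coincide with the non-divisible box monomials, so the footprint bound of the first step becomes an equality and $|V(\fq)|=N=H_r(d,m)$. Together with the upper bound and the base case this closes the induction and gives $e_r^{\A}(d,m)=H_r(d,m)$.
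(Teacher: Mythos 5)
The paper does not prove this statement at all: it is quoted verbatim from Heijnen and Pellikaan with a citation, so there is no internal proof to compare against, and your attempt has to stand on its own.

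On its own terms, your write-up assembles the correct machinery but leaves the theorem unproved. The footprint reduction is sound: adjoining the field equations $x_i^q-x_i$ makes $\fq[x_1,\dots,x_m]/I$ isomorphic to the ring of $\fq$-valued functions on the common zero set, so $|Z(f_1,\dots,f_r)(\fq)|$ is indeed bounded by the number $N(S)$ of box monomials divisible by none of the $r$ distinct leading monomials, and your explicit construction for the lower bound is the standard one and works. But the whole theorem then rests on the combinatorial claim that, as $S$ ranges over all $r$-element sets of exponent vectors of degree at most $d$, the maximum of $N(S)$ is $H_r(d,m)$ and is attained by the lexicographic segment. You verify this only for $r=1$ (via concavity of $a\mapsto\log(q-a)$), sketch a slicing recursion for an induction on $m$, and then explicitly concede that ruling out every ``spread-out'' choice of leading monomials is the main obstacle. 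That concession is the gap: this optimality statement is not a technical afterthought but the entire content of Heijnen--Pellikaan's result, proved there by a delicate induction comparing an arbitrary configuration of monomials to the lex segment. Moreover, the recursion $H_r(d,m)=\beta_1q^{m-1}+H_{r'}(d-\beta_1,m-1)$ you propose to drive the induction does not obviously apply to an adversarial $S$, because the $r$ leading monomials of an arbitrary subspace need not be compatible with any single slicing by $x_1=c$; organizing an arbitrary $S$ so that the induction closes is precisely where the difficulty lives. As written, the argument establishes the lower bound and reduces the upper bound to an unproven combinatorial theorem, so it is not a complete proof.
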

In fact, Heijnen and Pellikaan find a formula for $e_r^{\A}(d,m)$, without the requirement that $q\geq d+1$. This more general formula becomes $H_r(d,m)$ when $q\geq d+1$.
Boguslavsky and Tsfasman in \cite{boguslavsky1997number} conjectured an exact formula for $e_r(d,m)$.

\begin{conjecture}[Boguslavsky-Tsfasman conjecture]\label{TBC}\cite[Conjecture 3, Corollary 5]{boguslavsky1997number}\\
Suppose that we are given $m,d\geq 1$, $1 \leq r \leq \binom{m+d}{d}$ and $q\geq d+1$. Denote $w_r(d,m)=(\beta_1,\dots,\beta_{m+1})$ and $l=\min\{i\mid \beta_i\neq 0\}$. Then, for $q\geq d+1$, we have
$$e_r(d,m)=\sum_{i=l}^{m}\beta_{i}(\pi_{m-i}-\pi_{m-i-l}) +\pi_{m-2l}.$$
\end{conjecture}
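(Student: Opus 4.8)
The plan is to establish the two inequalities $e_r(d,m)\geq B$ and $e_r(d,m)\leq B$ separately, where $B=\sum_{i=l}^{m}\beta_{i}(\pi_{m-i}-\pi_{m-i-l})+\pi_{m-2l}$ denotes the conjectured value. Since the common zeros of an $r$-dimensional subspace $W\subseteq S_d(m)$ are exactly the positions on which the corresponding $r$-dimensional subcode of $PRM_q(d,m)$ vanishes, this is the same as computing the $r^{th}$ generalized Hamming weight $\pi_m-e_r(d,m)$, and I will pass freely between the two languages. The lower bound is constructive and I expect it to hold for every $q\geq d+1$; the upper bound is the substantial direction, and it is there that the hypothesis $q\geq d+1$ must be used in full strength.

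For the lower bound I would exhibit an explicit $r$-dimensional $W$ whose vanishing set attains $B$ points. Writing $w_r(d,m)=(\beta_1,\dots,\beta_{m+1})$ with $l=\min\{i\mid\beta_i\neq 0\}$, this vector encodes a staircase of exponents, and the shape of $B$, a main sum $\sum_{i=l}^{m}\beta_i(\pi_{m-i}-\pi_{m-i-l})$ together with a core term $\pi_{m-2l}$, indicates that the extremal locus is a nested arrangement adapted to the coordinate flag $\PPP^0\subset\dots\subset\PPP^m$, whose deepest common stratum is a linear subspace isomorphic to $\PPP^{m-2l}$ and whose $i^{th}$ outer stratum is cut by a product of $\beta_i\leq d$ distinct $\fq$-rational hyperplanes. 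I would take $W$ to be spanned by suitable products of such linear forms and verify by a direct stratified count that $|V(W)(\fq)|=B$; the requirement that the $\beta_i$ hyperplanes at each step be distinct and defined over $\fq$ is precisely what forces $q\geq d+1$. The case $r=1$, where $W=\langle\prod_{j=1}^{d}(X_0-a_jX_1)\rangle$ for distinct $a_j\in\fq$ yields $dq^{m-1}+\pi_{m-2}=B$, calibrates the construction.

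For the upper bound I would induct on $m$, decomposing $\pmfq=H(\fq)\sqcup\A^m(\fq)$ for a hyperplane $H$ chosen adversarially against a given $W$. Dehomogenization along the affine chart is a linear isomorphism $S_d(m)\to T_{\leq d}(m)$, so the $\fq$-points of $V(W)$ lying off $H$ are common zeros of an $r$-dimensional space of degree $\leq d$ polynomials in $m$ variables, hence at most $e_r^{\A}(d,m)=H_r(d,m)$ by Theorem \ref{Affine result}; meanwhile the points on $H$ are the common zeros in $\PPP^{m-1}$ of the restricted forms, a space of some dimension $r'\leq r$, to which the inductive hypothesis bounds the count by $e_{r'}(d,m-1)$. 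The combinatorial task is to choose $H$, and to control the possible drop $r'\leq r$, so that $H_r(d,m)+e_{r'}(d,m-1)$ does not exceed $B$; this should reduce to showing that the piecewise-linear-in-$q$ optimum of this sum over admissible flags is governed by the same lex-maximal composition $w_r(d,m)$ that defines $B$.

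The hardest step, and the reason the threshold matters, is tightness of this induction near $q=d+1$. A naive pairing of the affine bound $H_r(d,m)$ with the boundary bound $e_{r'}(d,m-1)$ overshoots $B$: already for $r=1$ the boundary contribution of the extremal configuration is only $\pi_{m-2}$, whereas $e_1(d,m-1)=dq^{m-2}+\pi_{m-3}$ is strictly larger by $(d-1)q^{m-2}$ when $d\geq 2$. Closing this gap requires a monovariant that penalizes components meeting $H$ in high-dimensional pieces by exactly the right amount, and the natural candidate is an induction that simultaneously tracks $\deg_i(V(W))$ and $\cdeg(V(W))$ in the spirit of the degree bounds of Lachaud and Rolland \cite{lachaud2015number}, bounding $|V(W)(\fq)|$ against $\cdeg$ and dimension and then bounding the degrees from the structure of $W$. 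I expect that forcing the inductive error terms to vanish uniformly down to $q=d+1$, rather than merely for $q$ large, is the genuine obstacle, since it is precisely in this regime that the lower-order terms $\pi_{m-i-l}$ and the competing choices of restricted dimension $r'$ enter at the same order as the main term.
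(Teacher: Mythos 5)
You have set out to prove a statement that the paper records only as a conjecture, and which the paper explicitly states is \emph{false}: immediately after quoting Conjecture~\ref{TBC}, the authors note that Datta and Ghorpade disproved it for every $r>m+1$, and that the correct formula in that range is the different expression of Conjecture~\ref{Conj: complete GDC}. The two formulas agree only for $r\le m+1$. Consequently no proof of the statement as written can be correct, and the specific step of your proposal that collapses is the ``constructive lower bound.'' Take $(d,m,r)=(2,3,5)$, so $r>m+1$. Here $w_5(2,3)=(0,2,0,0)$, $l=2$, and the conjectured value is $B=2(\pi_1-\pi_{-1})+\pi_{-1}=2q+2$, the point count of two skew lines in $\PPP^3$. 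But the quadrics in $\PPP^3$ vanishing on two fixed skew lines form only a $4$-dimensional subspace of $S_2(3)$ (each line imposes three independent conditions), so no $5$-dimensional $W$ can have $V(W)$ containing two skew lines; in fact $e_5(2,3)=2q+1$, attained by two intersecting lines, which is what the corrected conjecture predicts. Your proposed ``nested arrangement adapted to the coordinate flag'' presumes that the configuration encoded by $w_r(d,m)$ can always be cut out by an $r$-dimensional space of degree-$d$ forms; that dimension count is exactly what fails once $r>m+1$. (Your calibration case $r=1$ is fine, which is why the failure is invisible there.)

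Beyond this fatal issue, you should be aware that the paper does not attempt to prove this statement at all: it proves the corrected Beelen--Datta--Ghorpade formula, for $q$ large in terms of $(m,d,r)$, by a route quite different from your hyperplane-slicing induction. The paper first bounds $\dim V(W)$ and $\deg_{m-l}V(W)$ via Hilbert-function estimates (Propositions~\ref{dimension} and~\ref{degree}), bounds the contribution of low-dimensional components by a weighted-degree argument (Proposition~\ref{Prop: Components of codim geq l}), and then splits into two cases according to whether $V(W)$ contains an $(m-l)$-dimensional $\fq$-rational linear subspace, using Cafure--Matera point counts in one case and a stratified application of the affine Heijnen--Pellikaan theorem in the other. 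If you want a tractable target, restrict to $r\le m+1$ (where the statement is a theorem of Datta--Ghorpade) or reorient your argument toward Conjecture~\ref{Conj: complete GDC}.
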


The case $r=1$ of Conjecture~\ref{TBC} was previously shown by Serre in \cite{serre1991lettre} and S\o{}rensen in \cite{sorensen1991projective}. The case $r=2$ was proven by Boguslavsky in \cite{boguslavsky1997number}. Zanella in \cite{zanella1998linear} explicitly computed $e_r(2,m)$. Datta and Ghorpade in \cite{datta2017number} prove Conjecture~\ref{TBC} for $r\leq m+1$.
However, Datta and Ghorpade in \cite{datta2015conjecture} disproved Conjecture~\ref{TBC} for $r> m+1$ and proposed a new conjectured formula for $e_r(d,m)$ valid when $1\leq r\leq \binom{m+d-1}{d-1}$.

\begin{conjecture}[Incomplete Datta-Ghorpade conjecture]\cite[Conjecture 6.6]{datta2017number}\label{conjecture dqr}\\
Given $m,d\geq 1$, $1 \leq r \leq \binom{m+d-1}{d-1}$ and $q\geq d+1$, we have
$$e_r(d,m)=H_r(d-1,m)+\pi_{m-1}.$$
\end{conjecture}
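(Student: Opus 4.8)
The plan is to prove, for all $q\geq d+1$, the equivalent identity
\[
e_r(d,m)=e_r^{\A}(d-1,m)+\pi_{m-1}.
\]
Since $q\geq d+1>(d-1)+1$, Theorem~\ref{Affine result} gives $e_r^{\A}(d-1,m)=H_r(d-1,m)$, so the conjecture follows from this identity. The range $r\leq\binom{m+d-1}{d-1}=\dim_{\fq}S_{d-1}(m)$ is precisely what guarantees that an $r$-dimensional space of degree $d-1$ forms exists, and the whole problem reduces to a \emph{tight} comparison between projective configurations of degree $d$ forms and affine configurations of degree $d-1$ polynomials.

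For the lower bound, let $g_1,\dots,g_r\in T_{\leq d-1}(m)$ be linearly independent with $|Z(g_1,\dots,g_r)(\fq)|=e_r^{\A}(d-1,m)$. Homogenize each $g_i$ to a form $G_i\in S_{d-1}(m)$; homogenization to a fixed degree is injective, so the $G_i$ stay independent, and so do the $F_i:=X_0G_i\in S_d(m)$. Then $V(F_1,\dots,F_r)=\{X_0=0\}\cup V(G_1,\dots,G_r)$: the hyperplane $\{X_0=0\}$ supplies $\pi_{m-1}$ rational points, while the points with $X_0\neq0$ are exactly the affine zeros of the $g_i$, supplying $e_r^{\A}(d-1,m)$ more. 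Hence $e_r(d,m)\geq e_r^{\A}(d-1,m)+\pi_{m-1}$.

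The content is the matching upper bound. Given independent $F_1,\dots,F_r\in S_d(m)$, write $V=V(F_1,\dots,F_r)$, fix $H_\infty=\{X_0=0\}$, and split $|V(\fq)|=|(V\cap H_\infty)(\fq)|+|V_{\mathrm{aff}}(\fq)|$, where $V_{\mathrm{aff}}=Z(f_1,\dots,f_r)$ with $f_i=F_i(1,x_1,\dots,x_m)$ of degree $\leq d$. The points at infinity are the common zeros in $\PPP^{m-1}$ of the leading forms $F_i|_{H_\infty}$. If these all vanish — equivalently every $f_i$ has degree $\leq d-1$ — then each $F_i=X_0G_i$ with $G_i\in S_{d-1}(m)$, the dehomogenizations $f_i$ are independent of degree $\leq d-1$, and Theorem~\ref{Affine result} gives $|V(\fq)|\leq\pi_{m-1}+e_r^{\A}(d-1,m)$, meeting the lower bound. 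The task is therefore to rule out every other configuration: one must show that letting some leading forms survive — which shrinks $|(V\cap H_\infty)(\fq)|$ below $\pi_{m-1}$ but permits $V_{\mathrm{aff}}$ to attain effective degree $d$ — never yields a net gain in the range $r\leq\binom{m+d-1}{d-1}$.

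I would attack this by induction on $m$ (with $d$ fixed), the base case $m=1$ being elementary. Let $s=\dim\langle F_1|_{H_\infty},\dots,F_r|_{H_\infty}\rangle\geq1$ be the number of independent leading forms: the slice $V\cap H_\infty\subseteq\PPP^{m-1}$ is cut by $s$ independent degree $d$ forms, so the inductive hypothesis bounds $|(V\cap H_\infty)(\fq)|$, while the remaining $r-s$ independent combinations of the $f_i$ drop to degree $\leq d-1$ and constrain $V_{\mathrm{aff}}$. The main obstacle is to make this exchange sharp for \emph{every} $q\geq d+1$: a crude degree/dimension estimate controlling $|V_{\mathrm{aff}}(\fq)|$ by $\cdeg$ (Serre or Lachaud--Rolland) closes the gap only once $q$ is large, and one must also manage the slices whose index $s$ leaves the range of the inductive formula. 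The delicate point is thus to replace the asymptotic estimate by the exact combinatorial count underlying Theorem~\ref{Affine result} — a projective footprint argument that simultaneously tracks the leading-form locus and the affine part and shows that the lexicographic (Heijnen--Pellikaan) configuration with a single repeated hyperplane factor dominates throughout the range. This sharp, small-$q$ optimization is where the difficulty concentrates.
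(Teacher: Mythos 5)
Your lower-bound construction is correct and is exactly the standard one (it is Proposition~\ref{Lower bound for erdm} in the case $l=1$). But the upper bound, which you correctly identify as ``the content,'' is not proved: your final paragraph is a plan, and the step you defer --- showing that for \emph{every} $q\geq d+1$ no configuration with surviving leading forms beats the hyperplane-plus-affine configuration --- is precisely the open content of the statement. Note that the statement is labelled a conjecture in this paper for a reason: the paper does not prove it for all $q\geq d+1$. What the paper actually establishes is the weaker Theorem~\ref{Thm l=1 with e}, whose corollary gives the formula only for $q\geq (d-1)^2$, and Theorem~\ref{Thm: complete GDC conjecture}, which gives it for $q$ large in terms of $(m,d,r)$. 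So a complete proof under the hypothesis $q\geq d+1$ would be a genuinely new result, and nothing in your sketch supplies it.

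Concretely, two things are missing from your inductive scheme. First, when you slice by $H_\infty$ you get $s$ independent degree-$d$ forms on $\PPP^{m-1}$, and $s$ can exceed $\binom{m-1+d-1}{d-1}$, so the inductive hypothesis in the form you state it does not apply to the slice; the paper handles this by stratifying according to the parameter $t_W=\max_L\dim(W\cap LS_{d-1}(m))$ and the auxiliary index $e$ with $\binom{m+e}{e}<r\leq\binom{m+e+1}{e+1}$, treating the ranges $t_W\leq\binom{m+e-1}{e-1}$, $\binom{m+e-1}{e-1}<t_W\leq\binom{m+e}{e}$, $\binom{m+e}{e}<t_W<r$, and $t_W=r$ by entirely different arguments (Lemmas~\ref{small t}, \ref{medium t}, \ref{large t}, \ref{linear factor}), including a $\gcd$ decomposition $W=G\,W_1$ and the Homma--Kim bound (Proposition~\ref{no linear divisor hypersurface}) for the factor with no linear divisor. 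Second, even after all of that case analysis the inequalities only close under hypotheses like $q\geq d+e+\frac{e^2-1}{d-(e+1)}$ and $q\geq d-1+e^2-e$, which is how the threshold $q\geq(d-1)^2$ arises; your hope of replacing these estimates by an exact ``projective footprint'' count sharp down to $q=d+1$ is exactly the part no one has carried out. As written, the proposal does not constitute a proof of the statement.
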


It is seen that for $r\leq m+1$, Conjecture \ref{TBC} and Conjecture \ref{conjecture dqr} give the same formula, but they differ for $r>m+1$. Moreover, in \cite{datta2017number} Beelen, Datta, and Ghorpade prove Conjecture~\ref{conjecture dqr} for the case $d=1$ and also for the case $m=1$. In \cite{beelen2018maximum}, they prove Conjecture~\ref{conjecture dqr} for $r\leq \binom{m+2}{2}$, and in \cite{beelen2022combinatorial} they extended the conjecture to cover all values of $r$, that is, $1\leq r\leq \tbinom{m+d}{d}$.

\begin{conjecture}[Complete Beelen-Datta-Ghorpade conjecture]\cite[Equation 7]{beelen2022combinatorial}\label{Conj: complete GDC}\\
Suppose that we are given $m,d\geq 1$, $1 \leq r \leq \binom{m+d}{d}$ and $q\geq d+1$. Let $l$ be the unique integer such that $1\leq l\leq m+1$ and
$$\tbinom{m+d}{d}-\tbinom{m+d+1-l}{d} <r \leq \tbinom{m+d}{d}-\tbinom{m+d-l}{d}.$$
Let $j=r-\binom{m+d}{d}+\binom{m+d+1-l}{d}$, so $0< j\leq \binom{m+d-l}{d-1}$.
Then we have
$$e_r(d,m)=H_j(d-1,m-l+1) +\pi_{m-l}.$$
\end{conjecture}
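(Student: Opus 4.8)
The plan is to prove the conjectured identity by establishing matching lower and upper bounds, with the exact value emerging once $q$ is large. For the \textbf{lower bound}, which I expect to hold for all $q\ge d+1$, I would exhibit an explicit $r$-dimensional subspace $W\subseteq S_d(m)$ whose common zero set attains $H_j(d-1,m-l+1)+\pi_{m-l}$. The shape of the formula dictates the construction: fix a linear subspace $L\cong\PPP^{m-l}$, cut out by $l$ independent linear forms $\ell_1,\dots,\ell_l$. Every degree-$d$ form vanishing on $L$ lies in the ideal $(\ell_1,\dots,\ell_l)$ and hence equals $\sum_i \ell_i G_i$ with $\deg G_i=d-1$; this is precisely where the shift to degree $d-1$ originates. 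Using Theorem~\ref{Affine result}, take an affine configuration of $j$ independent polynomials of degree $\le d-1$ in $m-l+1$ variables attaining $e_j^{\A}(d-1,m-l+1)=H_j(d-1,m-l+1)$, homogenize it into forms supported transversally to $L$, and combine with forms ensuring $L$ is contained in the zero set. One then checks that the resulting forms are linearly independent, that exactly $r$ of them are used (this is the role of the binomial inequalities defining $l$ and $j$), and that the point count is $\pi_{m-l}$ (the points of $L$) plus $H_j(d-1,m-l+1)$ (the transverse affine contribution).

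The real content is the \textbf{upper bound}, and here the hypothesis that $q$ is large is essential. I would work in the projective-system picture recalled in the abstract: an $r$-dimensional $W$ corresponds to a codimension-$r$ linear subvariety $\Lambda$ of $\PPP^{\binom{m+d}{d}-1}$, and $|V(F_1,\dots,F_r)(\fq)|$ equals the number of $\fq$-points of the Veronese variety lying on $\Lambda$. Writing $X=V(F_1,\dots,F_r)$ and $s=\dim X$, I would invoke the Lachaud--Rolland point-counting estimates (the reason $\cdeg$ and $\deg_i$ are introduced) in the form $|X(\fq)|\le \deg_s(X)\,q^s+\cdeg(X)\,\pi_{s-1}$, together with the accompanying bounds on lower-dimensional components. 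For $q$ large these estimates are sharp enough to force any near-extremal $X$ into a rigid shape: its top-dimensional part must be a union of linear subspaces passing through a common linear subspace, exactly the Serre-type extremizer already visible in the case $r=1$, where the formula specializes to $d\,q^{m-1}+\pi_{m-2}$.

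With the extremal geometry pinned down, the remaining step is a combinatorial optimization. I would parametrize candidate configurations by the dimension $s$ of the top-dimensional part and by how the $r$ forms are distributed between cutting down dimension and raising the degree along a fixed flag, and show that the maximal attainable point count over all such profiles is governed by the lexicographic order on $\Omega(d-1,m-l+1)$, precisely the order defining $\omega_j$ and hence $H_j(d-1,m-l+1)$. Dehomogenizing along a well-chosen hyperplane at infinity, so that $|X(\fq)|$ splits as an affine count plus a count on $\PPP^{m-1}$, lets one feed in the exact affine value from Theorem~\ref{Affine result} and close the argument by induction on $m$, the optimization determining which pair $(l,j)$ occurs.

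The step I expect to be the main obstacle is this upper bound, for two linked reasons. First, because the target formula is \emph{exact} rather than a leading-order asymptotic (it contains the full polynomial $\pi_{m-l}=q^{m-l}+\dots+1$), the large-$q$ estimates must control \emph{all} lower-order terms; this requires proving the rigidity statement that the extremal top-dimensional part is genuinely a union of linear spaces, and that higher-degree irreducible components or stray lower-dimensional components cannot contribute spurious points. Second, the combinatorial optimization translating the geometric constraints into the lexicographic quantity $H_j$ must be carried out uniformly across all ranges of $r$, equivalently across all $(l,j)$, and quantifying exactly how large $q$ must be, uniformly in $m,d,r$, to guarantee both the rigidity and the exactness is the delicate heart of the proof.
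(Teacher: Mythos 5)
Your overall architecture --- cite the known lower bound, then prove a matching upper bound by splitting into a ``linear'' extremal case handled via Theorem~\ref{Affine result} plus combinatorics and a ``non-linear'' case killed by point-counting estimates for large $q$ --- is essentially the strategy of the paper (note that neither you nor the paper proves the conjecture as stated for all $q\geq d+1$; both only reach it for $q$ large in terms of $m,d,r$). But there are two concrete gaps in your upper-bound argument.

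First, you never explain how the number $r$ of forms controls the dimension and degree of $X=V(F_1,\dots,F_r)$. You write $s=\dim X$ and invoke Lachaud--Rolland with $\deg_s(X)$, but without Propositions~\ref{dimension} and~\ref{degree} --- which the paper derives from lower bounds on the Hilbert function of a subscheme in terms of its dimension and degree (Proposition~\ref{Prop: dim deg bound on S_t}), using that $F_1,\dots,F_r\in I(X)_d$ forces $\dim S_d(X)\leq\binom{m+d}{d}-r$ --- there is no link between $r$ and the pair $(\dim X,\deg_{m-l}X)$, hence no way for your estimates to produce the specific quantity $H_j(d-1,m-l+1)+\pi_{m-l}$ attached to the range of $r$. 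This Hilbert-function step is the missing engine of the whole upper bound. Second, the rigidity you want (``the top-dimensional part must be a union of linear subspaces through a common linear subspace'') is both stronger than needed and not obtainable from Lachaud--Rolland, whose bound $\deg_s(X)q^s+\cdots$ cannot distinguish a union of $\delta$ linear spaces from one irreducible component of degree $\delta$. What actually works, and what the paper uses, is the Weil-type estimate of Cafure--Matera: a geometrically irreducible component of degree $\delta\geq 2$ has only $q^k+O(\delta^{13/3}q^{k-1/2})$ points rather than $\delta q^k$, and a component irreducible over $\fq$ but not over $\fqb$ has $O(\delta^2q^{k-1})$ points. With these, the mere \emph{absence} of a single $(m-l)$-dimensional linear subspace over $\fq$ already forces $|X(\fq)|<cq^{m-l}$, which is strictly below the target by Lemma~\ref{Lem starts with c q m-l}; no classification of the extremal configuration is required. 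In the complementary case the paper does not classify $X$ either: it slices $X$ minus the linear subspace along the flag $V(X_0,\dots,X_{i-1})$ into $l$ affine pieces, bounds each by $H_{r_i}(d-1,m+1-i)$ via Theorem~\ref{Affine result}, and closes with a purely combinatorial superadditivity lemma for the $H$-function (Lemma~\ref{Lem: sum of Hrk}); your ``combinatorial optimization over profiles'' gestures at this but would need to be made precise along these lines.
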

Note that Conjecture~\ref{conjecture dqr} covers the values of $r$ for which $l=1$.
Conjecture~\ref{Conj: complete GDC} was proven for $\binom{m+d}{d}-d\leq r\leq \binom{m+d}{d}$ by Datta and Ghorpade in \cite{datta2017remarks}. This range of $r$ corresponds to $l\in \{m,m+1\}$.
Beelen, Datta, and Ghropade in \cite{beelen2022combinatorial} show that their conjectured formula is a lower bound for $e_r(d,m)$.

\begin{proposition}\cite[Theorem 2.3]{beelen2022combinatorial}\label{Lower bound for erdm}\\
Suppose that we are given $1 \leq r \leq \binom{m+d}{d}$ and $q\geq d+1$. Furthermore, suppose that
$$\tbinom{m+d}{d}-\tbinom{m+d+1-l}{d} <r \leq \tbinom{m+d}{d}-\tbinom{m+d-l}{d},$$
and $j=r-\binom{m+d}{d}+\binom{m+d+1-l}{d}$. Then we have
$$e_r(d,m)\geq H_j(d-1,m-l+1) +\pi_{m-l}.$$
\end{proposition}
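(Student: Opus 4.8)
The plan is to prove this lower bound by an explicit construction: I will exhibit a single $r$-dimensional subspace $W\subseteq S_d(m)$ whose common zero locus has at least $H_j(d-1,m-l+1)+\pi_{m-l}$ rational points, so that the inequality follows directly from the definition of $e_r(d,m)$. The guiding idea is to realize the two summands geometrically and disjointly: $\pi_{m-l}=|\PPP^{m-l}(\fq)|$ should come from a linear subspace $\Pi\cong\PPP^{m-l}$ contained in $V(W)$, while $H_j(d-1,m-l+1)=e_j^{\A}(d-1,m-l+1)$ (equal by Theorem~\ref{Affine result}, since $q\geq d+1\geq d$) should come from transplanting the optimal affine configuration of Heijnen and Pellikaan into an affine chart of $\PPP^m$ that is disjoint from $\Pi$.

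First I would fix the bookkeeping. Writing $N_{\geq k}$ for the number of degree-$d$ monomials in $X_0,\dots,X_m$ that involve at least one of $X_0,\dots,X_{k-1}$, counting complements gives $N_{\geq l-1}=\binom{m+d}{d}-\binom{m+d+1-l}{d}$ and $N_{\geq l}=\binom{m+d}{d}-\binom{m+d-l}{d}$, since $\binom{m+d+1-l}{d}$ and $\binom{m+d-l}{d}$ count the degree-$d$ monomials supported on $X_{l-1},\dots,X_m$ and on $X_l,\dots,X_m$ respectively. Thus the hypothesis reads $N_{\geq l-1}<r\leq N_{\geq l}$, and $j=r-N_{\geq l-1}$ satisfies $0<j\leq\binom{m+d-l}{d-1}$, the last quantity being the dimension of the space of degree-$d$ forms in $X_{l-1},\dots,X_m$ divisible by $X_{l-1}$. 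Now let $B$ be the set of all $N_{\geq l-1}$ monomials involving at least one of $X_0,\dots,X_{l-2}$; each such monomial vanishes identically on $\{X_0=\cdots=X_{l-2}=0\}$. Applying Theorem~\ref{Affine result} with parameters $(d-1,m-l+1)$, choose linearly independent $g_1,\dots,g_j\in T_{\leq d-1}(m-l+1)$ in affine variables $u_l,\dots,u_m$ whose common zero set has exactly $H_j(d-1,m-l+1)$ points in $\A^{m-l+1}(\fq)$. Homogenize each $g_i$ to a degree-$(d-1)$ form $G_i$ in $X_{l-1},\dots,X_m$ with respect to $X_{l-1}$, set $F_i=X_{l-1}G_i$, and let $W=\operatorname{span}_{\fq}\big(B\cup\{F_1,\dots,F_j\}\big)$. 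Because the elements of $B$ involve $X_0,\dots,X_{l-2}$ while the $F_i$ are supported on $X_{l-1},\dots,X_m$, linear independence of the whole family reduces to that of the $F_i$, which in turn follows from linear independence of the $g_i$ since homogenization and multiplication by $X_{l-1}$ are injective; hence $\dim_{\fq}W=N_{\geq l-1}+j=r$.

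Finally I would count the points of $V(W)(\fq)$. Consider $\Pi=\{X_0=\cdots=X_{l-1}=0\}\cong\PPP^{m-l}$ and the set $U$ of points $[0:\cdots:0:1:u_l:\cdots:u_m]$ as $(u_l,\dots,u_m)$ ranges over the $H_j(d-1,m-l+1)$ common zeros of $g_1,\dots,g_j$; these lie in the complementary loci $X_{l-1}=0$ and $X_{l-1}\neq 0$, so $\Pi\cap U=\emptyset$. Every monomial of $B$ vanishes on $\Pi\cup U$ because $X_0=\cdots=X_{l-2}=0$ holds there. Each $F_i=X_{l-1}G_i$ vanishes on $\Pi$ since $X_{l-1}=0$, and at a point of $U$ it evaluates to $G_i(1,u_l,\dots,u_m)=g_i(u_l,\dots,u_m)=0$. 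Hence $\Pi\cup U\subseteq V(W)(\fq)$, and therefore $e_r(d,m)\geq|V(W)(\fq)|\geq\pi_{m-l}+H_j(d-1,m-l+1)$.

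Since the argument is a construction rather than a hard estimate, there is no deep obstacle; the delicate points are organizational. The step needing the most care is the combinatorial identification of $r$ with $N_{\geq l-1}+j$ together with the simultaneous containment $\Pi\cup U\subseteq V(W)$ — in particular the observation that it is the factor $X_{l-1}$ (rather than $G_i$ alone) that forces the new forms to vanish on the linear subspace $\Pi$ while still cutting out the transplanted affine zeros on the chart. The boundary value $l=m+1$, where $\Pi=\emptyset$, $\pi_{m-l}=\pi_{-1}=0$, and the chart degenerates to the single point $[0:\cdots:0:1]$, should be checked separately but is immediate: there $j=1$, $W=S_d(m)$, and the asserted bound is the trivial $e_r(d,m)\geq 0$.
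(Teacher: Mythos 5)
Your construction is correct: the identification $r=N_{\geq l-1}+j$, the linear independence of $B\cup\{X_{l-1}G_1,\dots,X_{l-1}G_j\}$, and the disjoint containments $\Pi\subseteq V(W)$ and $U\subseteq V(W)$ all check out, and the degenerate case $l=m+1$ is handled. Note, however, that the paper does not prove this proposition at all --- it imports it as Theorem 2.3 of \cite{beelen2022combinatorial} --- so there is no internal proof to compare against; your argument is essentially the standard construction from that reference (a linear $\PPP^{m-l}$ glued to an optimal Heijnen--Pellikaan affine configuration placed on the chart $X_{l-1}\neq 0$), and it is a perfectly valid self-contained justification of the lower bound.
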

The previously proven cases of Conjecture \ref{Conj: complete GDC} cover very specific values for $(m,d,r)$, but allow for any $q \geq d+1$.
In this paper our goal is to prove Conjecture~\ref{Conj: complete GDC} for any possible $(m,d,r)$, but for sufficiently large $q$, and we give an effective bound on $q$ in terms of $(m,d,r)$.
We would also like to point out that the proof of Theorem~\ref{Affine result} and the proofs of existing cases of Conjecture~\ref{Conj: complete GDC} are purely combinatorial. However, we will make use of tools from algebraic geometry in our work.

Notice that in Conjecture~\ref{Conj: complete GDC}, the authors divide the range $1\leq r\leq \binom{m+d}{d}$ into smaller sub-ranges based on the value of $l$, which can vary as $1\leq l\leq m+1$. It turns out that if $r$ belongs to the range corresponding to $l$, then the conjectured formula of $e_r(d,m)$ is a polynomial in $q$ of degree $m-l$. In Theorem~\ref{Thm: complete GDC conjecture}, we further divide these sub-ranges based on values of $l$ and $c$ that can vary as $1\leq l\leq m$ and $1\leq c\leq d$. If $r$ belongs to the range corresponding to $l,c$, then the conjectured polynomial of $e_r(d,m)$ has degree $m-l$ and leading coefficient $c$.

Our main result in this paper is the following.

\begin{theorem}\label{Thm: complete GDC conjecture}
Suppose that we have $m,d\geq 1$ and $1\leq r\leq \binom{m+d}{d}$.
Let $l$ and $c$ be the unique integers such that $1\leq l\leq m$, $1\leq c\leq d$ and
\begin{align*}
\tbinom{m+d}{d}&-\tbinom{m+d+1-l}{d}+\tbinom{m+d-l-c}{d-c-1} <r \leq \tbinom{m+d}{d}-\tbinom{m+d+1-l}{d}+\tbinom{m+d+1-l-c}{d-c}.
\end{align*}
If
$$q\geq \max\Big\{2(m-l+1)c^2+1,\;8\frac{d^{l+1}}{c},\; 164 c^{14/3}\Big\},$$
then we have
$$e_r(d,m)= H_{r-\binom{m+d}{d}+\binom{m+d+1-l}{d}}(d-1,m-l+1)+\pi_{m-l}.$$
\end{theorem}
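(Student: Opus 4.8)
Since Proposition~\ref{Lower bound for erdm} already supplies the inequality ``$\geq$'', the whole of the work is to prove the reverse bound
$$e_r(d,m)\le H_j(d-1,m-l+1)+\pi_{m-l},\qquad j=r-\tbinom{m+d}{d}+\tbinom{m+d+1-l}{d}.$$
So let $W\subseteq S_d(m)$ be an arbitrary subspace with $\dim_{\fq}W=r$, set $X=V(W)\subseteq\pmfqb$, and let me aim to bound $|X(\fq)|$ through the geometry of $X$. The one structural fact that drives everything is that every element of $W$ vanishes on $X$, so $W\subseteq I(X)_d$ and therefore the Hilbert function of $X$ obeys
$$h_X(d):=\dim_{\fqb}\!\big(\fqb[X_0,\dots,X_m]_d/I(X)_d\big)\le \tbinom{m+d}{d}-r.$$
Every bound on the dimension and the degrees of the components of $X$ that I will need is to be squeezed out of this single inequality together with the precise interval defining $l$ and $c$.

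The first task is to bound $\dim X$. A generic linear projection restricts to a finite surjection $X\to\PPP^{\dim X}$, which makes the coordinate ring of $\PPP^{\dim X}$ inject degree-preservingly into that of $X$; hence $h_X(d)\ge\binom{\dim X+d}{d}$, i.e.\ a linear space minimises the Hilbert function in each fixed dimension. Because the lower end of the $r$-interval forces $h_X(d)<\binom{m+d+1-l}{d}=\binom{(m-l+1)+d}{d}$, this comparison immediately yields $\dim X\le m-l$. A refinement of the same idea, now keeping track of how much the top-dimensional part contributes to $h_X(d)$, is what I expect to convert the remaining slack in the $r$-interval into the degree bound $\deg_{m-l}(X)\le c$; this is the step where the definition of $c$ is genuinely used, and where the combinatorial bookkeeping will be most delicate.

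With $\dim X\le m-l$ and $\deg_{m-l}(X)\le c$ secured, I would count points dimension by dimension. For the components of dimension strictly below $m-l$ a B\'ezout estimate bounds their degrees by a power of $d$ of size about $d^{\,l+1}$, so a crude ``degree times $q^{\dim}$'' count caps their contribution at a constant multiple of $q^{m-l-1}$; the hypothesis $q\ge 8d^{\,l+1}/c$ is exactly what makes this lower than the leading term $c\,q^{m-l}$ of the target. For the top-dimensional part I would invoke a Lachaud--Rolland type degree bound (in the notation $\deg_i$, $\cdeg$ introduced above), supplemented by an effective Lang--Weil estimate for those top components that are not linear. The role of the latter is conceptual: a genuinely non-linear component of degree $\ge 2$ carries about $q^{m-l}+O(q^{m-l-1/2})$ rational points, strictly fewer in the lower-order terms than a linear $\PPP^{m-l}$, so the point count is maximised by unions of linear $(m-l)$-planes. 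The thresholds $q\ge 164\,c^{14/3}$ and $q\ge 2(m-l+1)c^2+1$ are the quantitative price of making these error terms provably smaller than the gaps between consecutive values of the target formula.

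The real obstacle is the exact matching of \emph{lower}-order terms in the critical situation $\dim X=m-l$ and $\deg_{m-l}(X)=c$, where none of the crude estimates has any slack left. Here I expect to need a structural description of the top-dimensional part of $X$ as a union of $c$ linear $(m-l)$-planes whose incidences are constrained by the Hilbert-function bound, followed by an inclusion--exclusion count in which the excess over a single $\pi_{m-l}$ is recognised as the maximal number of affine zeros of $j$ polynomials of degree $\le d-1$ in $m-l+1$ variables. Identifying that excess with $H_j(d-1,m-l+1)$ is then precisely Theorem~\ref{Affine result} of Heijnen and Pellikaan. Carrying out this reduction to an affine problem uniformly --- controlling simultaneously the number, the degrees and the incidence pattern of the components, and proving that all accumulated error terms are absorbed by the three explicit lower bounds on $q$ --- is where the substance of the proof resides.
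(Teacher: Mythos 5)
Your outline of the ``non-critical'' parts matches the paper's architecture: the Hilbert-function inequality $h_X(d)\le\binom{m+d}{d}-r$ combined with the lower bound $h_X(d)\ge\binom{\dim X+d+1}{\dim X+1}-\binom{\dim X+d+1-\deg X}{\dim X+1}$ is exactly how Propositions~\ref{dimension} and~\ref{degree} extract $\dim X\le m-l$ and $\deg_{m-l}(X)\le c$ (your generic-projection justification of the base inequality is a legitimate alternative to the paper's induction via hyperplane sections), the lower-dimensional components are indeed controlled by a B\'ezout-type weighted degree sum (Proposition~\ref{Prop: bound on the weighted sum}), and the non-linear top-dimensional components are handled by projectivized Cafure--Matera estimates. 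Up to that point you have reconstructed Sections~\ref{sec: two}--\ref{sec: case when X does not contain a linear subspace} in outline.

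The genuine gap is in the critical case, which you yourself flag as ``where the substance of the proof resides'' but for which you propose a plan that would not go through. You want to show that the top-dimensional part of $X$ is a union of $c$ linear $(m-l)$-planes and then run an inclusion--exclusion over these planes. Neither step is available: the extremal $W$ is constrained as a subspace of $S_d(m)$, so you cannot replace a non-linear component by a linear one ``since it has more points'' without changing $W$, and in the equality case $X$ may simultaneously have linear top components, non-linear top components, and lower-dimensional components, all of whose contributions must be matched exactly against $H_j(d-1,m-l+1)+\pi_{m-l}$ with no slack. The paper's dichotomy is different and is the key idea you are missing: either $X$ contains \emph{no} $\fq$-rational linear $(m-l)$-plane, in which case every $\fqb$-irreducible top component has degree $\ge 2$ and contributes only about $\tfrac{d_i}{2}q^{m-l}$, so the crude bounds already beat $cq^{m-l}$ (Proposition~\ref{X does not have linear subspace}); or $X$ contains at least \emph{one} such plane $V(X_0,\dots,X_{l-1})$, in which case every $F_i$ lies in $\langle X_0,\dots,X_{l-1}\rangle$, one filters $W$ by $W_i=W\cap\langle X_0,\dots,X_{i-1}\rangle$, decomposes $\PPP^m\setminus V(X_0,\dots,X_{l-1})$ into $l$ affine charts on which the relevant polynomials dehomogenize to degree $\le d-1$, applies Heijnen--Pellikaan on each chart to get $\sum_i H_{r_i}(d-1,m+1-i)+\pi_{m-l}$, and then needs the purely combinatorial superadditivity Lemma~\ref{Lem: sum of Hrk} to collapse this sum to $H_{r'}(d-1,m-l+1)$. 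This single-plane slicing argument, together with that combinatorial lemma, is the mechanism that produces the exact $H_j(d-1,m-l+1)$ term; your inclusion--exclusion over $c$ planes does not supply it, and no amount of tuning the three lower bounds on $q$ substitutes for it.
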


Since Proposition~\ref{Lower bound for erdm} shows that the conjectured formula is lower bound for $e_r(d,m)$, we only need to show that it is an upper bound.
We want to show that (for large $q$), given any linearly independent polynomials $F_1,\dots, F_r\in S_d(m)$, $|V(F_1,\dots,F_r)(\fq)|$ is at most the conjectured formula for $e_r(d,m)$. This will complete the proof of Theorem~\ref{Thm: complete GDC conjecture}.

We start by studying the dimension and degree of $V(F_1,\dots ,F_r)$ for linearly independent $F_i\in S_d(m)$ and show the following.

\begin{proposition}\label{dimension}
Suppose that we have $1\leq l\leq m$ and $\binom{m+d}{d}-\binom{m+d+1-l}{d} <r \leq \binom{m+d}{d}$. Given $F_1, \dots, F_r \in S_d(m)$ that are linearly independent, we have
$$\dim(V(F_1, \dots, F_r)) \leq m-l.$$
\end{proposition}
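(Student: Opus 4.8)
The plan is to recast the statement as a lower bound on the Hilbert function of $X := V(F_1,\dots,F_r)$ and to argue by contraposition. Write $k = m-l+1$, so that $\binom{m+d+1-l}{d} = \binom{k+d}{d}$ and the hypothesis reads $r > \binom{m+d}{d} - \binom{k+d}{d}$. I will show that if, on the contrary, $\dim X \ge k$, then necessarily $r \le \binom{m+d}{d}-\binom{k+d}{d}$, which contradicts the hypothesis and forces $\dim X \le m-l$. Since the $F_i$ are linearly independent and all vanish on $X$, the space $W = \langle F_1,\dots,F_r\rangle$ lies inside the degree-$d$ part $I(X)_d$ of the homogeneous ideal of $X$; hence $r = \dim W \le \dim I(X)_d = \binom{m+d}{d} - h_X(d)$, where $h_X(d) = \dim_{\fqb}(S_d(m)/I(X)_d)$. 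Everything thus reduces to the single inequality $h_X(d) \ge \binom{k+d}{d}$ whenever $\dim X \ge k$, i.e.\ to the fact that a $k$-dimensional algebraic set has Hilbert function at least that of a $k$-plane.

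To prove this bound I would first replace $X$ by an irreducible subvariety $Y \subseteq X$ of dimension exactly $k$ (available since $\dim X \ge k$, by cutting a suitable component with generic hyperplanes); because $I(X) \subseteq I(Y)$ we have $h_X(d) \ge h_Y(d)$, so it suffices to bound $h_Y(d)$. The key geometric input is a generic linear projection. Choosing a generic linear subspace $\Lambda \cong \PPP^{m-k-1}(\fqb)$ disjoint from $Y$ (possible since $\dim\Lambda + \dim Y = m-1 < m$ and the base field is infinite), projection from $\Lambda$ gives a morphism $\pi : Y \to \PPP^k(\fqb)$, which after a linear change of coordinates we may take to be $[x_0:\dots:x_m]\mapsto [x_0:\dots:x_k]$ with $\Lambda = V(x_0,\dots,x_k)$. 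This $\pi$ is projective with finite fibers: a fiber is $Y\cap M$ for a linear space $M\supseteq\Lambda$ of dimension $m-k$, and a positive-dimensional such intersection would meet the hyperplane $\Lambda$ of $M$ for dimension reasons, contradicting $\Lambda\cap Y=\emptyset$. Hence $\pi$ is finite, so its image is closed of dimension $k$ in $\PPP^k(\fqb)$, i.e.\ $\pi$ is surjective.

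Surjectivity of $\pi$ produces the forms we need: if $g(x_0,\dots,x_k)$ is a degree-$d$ form in the first $k+1$ variables vanishing on $Y$, then it vanishes on $\pi(Y) = \PPP^k(\fqb)$, so $g=0$; consequently the $\binom{k+d}{d}$ monomials $x_0^{a_0}\cdots x_k^{a_k}$ of degree $d$ are linearly independent modulo $I(Y)_d$, giving $h_Y(d) \ge \binom{k+d}{d}$. The cleanest way to finish is to observe directly that these monomials together with $F_1,\dots,F_r$ are linearly independent in $S_d(m)$: in any relation $\sum_i c_i F_i + \sum_\alpha b_\alpha x^\alpha = 0$ the first sum vanishes on $Y$, forcing $\sum_\alpha b_\alpha x^\alpha \in I(Y)_d$ and hence all $b_\alpha=0$, after which linear independence of the $F_i$ kills the $c_i$. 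Counting dimensions in $S_d(m)$ then yields $r + \binom{k+d}{d} \le \binom{m+d}{d}$, the desired contradiction. I expect the main obstacle to be the geometric step of the second paragraph, namely producing a linear projection that is genuinely surjective onto $\PPP^k(\fqb)$, since that is where irreducibility, the disjointness of the center $\Lambda$, and the dimension bookkeeping must all be handled with care; once surjectivity is in hand, the remainder is pure linear algebra.
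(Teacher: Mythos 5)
Your proof is correct, but it reaches the key inequality by a genuinely different route than the paper. Both arguments reduce the proposition to the same linear-algebra fact: if $\dim X\geq k$ then $\dim_{\fqb} S_d(X)\geq \tbinom{k+d}{d}$, hence $r\leq \tbinom{m+d}{d}-\tbinom{k+d}{d}$. The paper proves this Hilbert-function lower bound (its Proposition~\ref{Prop: dim deg bound on S_t}) by induction on dimension, slicing with a hyperplane containing no component and using the multiplication-by-$x_0$ exact sequence, with a separate counting argument for the zero-dimensional base case. You instead pass to an irreducible $k$-dimensional $Y\subseteq X$ and use a finite, surjective linear projection $\pi:Y\to\PPP^k(\fqb)$ to exhibit the $\tbinom{k+d}{d}$ degree-$d$ monomials in $x_0,\dots,x_k$ as linearly independent modulo $I(Y)_d$; all the steps there (existence of a disjoint center $\Lambda$, finiteness via the observation that a positive-dimensional fiber would meet $\Lambda$, surjectivity from closedness of the image) are sound over $\fqb$. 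Your argument is more self-contained and geometric for this one statement. What the paper's approach buys is the refined bound $\dim S_t(X)\geq \tbinom{t+k+1}{k+1}-\tbinom{t+k+1-c}{k+1}$ involving the degree $c=\deg(X)$, which is exactly what is needed for the companion Proposition~\ref{degree} bounding $\deg_{m-l}(X)$; your projection argument sees only the image $\PPP^k$ and so carries no degree information, and would need additional work (e.g., tracking the zero-dimensional fibers or reintroducing the hyperplane-section induction) to yield that second proposition.
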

\begin{proposition}\label{degree}
Suppose that we have $1\leq l\leq m$, $1\leq c\leq d$ and
$$\tbinom{m+d}{d}-\tbinom{m+d+1-l}{d}+\tbinom{m+d-l-c}{d-c-1} <r \leq \tbinom{m+d}{d}.$$
Then given $F_1, \dots, F_r \in S_d(m)$ that are linearly independent, we have:
$$\deg_{m-l}(V(F_1, \dots, F_r)) \leq c.$$
\end{proposition}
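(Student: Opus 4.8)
The plan is to convert the desired degree bound into a lower bound on a Hilbert function. Set $n=m-l$ and let $Z$ be the union of the $(m-l)$-dimensional irreducible components of $V(F_1,\dots,F_r)$; by Proposition~\ref{dimension} these are exactly its top-dimensional components, and $\deg_{m-l}(V(F_1,\dots,F_r))=\deg Z=:\delta$. If $\delta=0$ there is nothing to prove, so I assume $\delta\geq 1$ and argue by contradiction, supposing $\delta\geq c+1$. Writing $S=\fqb[X_0,\dots,X_m]$ and letting $I(Z)$ be the saturated homogeneous ideal of $Z$ with Hilbert function $h_Z(t)=\dim_{\fqb}(S/I(Z))_t$, the point is that each $F_i$ vanishes on $Z$, so $\langle F_1,\dots,F_r\rangle\subseteq I(Z)_d$ and therefore
$$r\le \dim_{\fqb} I(Z)_d=\binom{m+d}{d}-h_Z(d).$$

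The heart of the argument is the following lower bound on the Hilbert function of a subscheme in terms of its dimension and degree: if $Z$ is equidimensional of dimension $n$ and degree $\delta$, then
$$h_Z(d)\ge \binom{n+1+d}{d}-\binom{n+1+d-\delta}{d-\delta},$$
the right-hand side being exactly the Hilbert function (in degree $d$) of a degree-$\delta$ hypersurface in a linearly embedded $\PPP^{n+1}$, which is the extremal, maximally degenerate configuration. I would prove this by descending induction on $n$ via generic hyperplane sections. For the saturated ideal of a scheme $W$ of dimension at least $1$, a generic linear form $\ell$ is a nonzerodivisor on $S/I(W)$ (saturatedness means the irrelevant ideal is not associated), so the sequence $0\to (S/I(W))(-1)\xrightarrow{\ell}S/I(W)\to S/(I(W),\ell)\to 0$ yields $h_W(t)\ge \sum_{s\le t}h_{W\cap H}(s)$ for $H=V(\ell)$, where $W\cap H$ is equidimensional of dimension one less and, for generic $H$, of the same degree $\delta$. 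Iterating $n$ times reduces $Z$ to a set $P$ of $\delta$ points, so that $h_Z(d)$ is bounded below by the $n$-fold cumulative sum of $h_P$. Finally, the minimal Hilbert function of a reduced zero-dimensional scheme of degree $\delta$ satisfies $h_P(s)\ge \min(s+1,\delta)$ — the value for $\delta$ collinear points — because $h_P(0)=1$, $h_P$ is nondecreasing, and it increases strictly until it stabilizes at $\delta$. Taking the $n$-fold cumulative sum of $\min(s+1,\delta)$ reproduces the claimed binomial expression, since coning the $\PPP^1$-hypersurface Hilbert function $n$ times yields the $\PPP^{n+1}$-hypersurface one.

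With the lemma in hand the proposition follows quickly. The displayed lower bound is increasing in $\delta$: Pascal's identity shows the subtracted term $\binom{n+1+d-\delta}{d-\delta}$ is nonincreasing in $\delta$. Hence from $\delta\ge c+1$ and $n=m-l$,
$$h_Z(d)\ge \binom{n+1+d}{d}-\binom{n+d-c}{d-c-1}=\binom{m+d+1-l}{d}-\binom{m+d-l-c}{d-c-1}.$$
Combining this with $r\le \binom{m+d}{d}-h_Z(d)$ gives $r\le \binom{m+d}{d}-\binom{m+d+1-l}{d}+\binom{m+d-l-c}{d-c-1}$, which contradicts the hypothesis on $r$. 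Therefore $\delta\le c$, that is, $\deg_{m-l}(V(F_1,\dots,F_r))\le c$.

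I expect the main obstacle to be the key lemma, and specifically the hyperplane-section bookkeeping: guaranteeing that a generic linear form is a nonzerodivisor at every stage (which is why one must work with the saturated ideal and with equidimensional $Z$), that the dimension drops by exactly one while the degree is preserved, and that the final zero-dimensional section is reduced so that the classical minimal-Hilbert-function-of-points estimate applies. The combinatorial identity relating the iterated cumulative sum of $\min(s+1,\delta)$ to the hypersurface Hilbert function is routine by comparison of first differences, but it must be checked with the convention that binomial coefficients with negative lower argument vanish, so that the formula remains correct in the range $\delta>d$ where $c=d$.
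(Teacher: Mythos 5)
Your proposal is correct and follows essentially the same route as the paper: pass to the top-dimensional part $Z$, bound $r\le\binom{m+d}{d}-h_Z(d)$, and invoke a lower bound $h_Z(d)\ge\binom{n+1+d}{n+1}-\binom{n+1+d-\delta}{n+1}$ for an equidimensional scheme of dimension $n$ and degree $\delta$, proved by induction on dimension via hyperplane sections down to the zero-dimensional case (this is exactly the paper's Proposition~\ref{Prop: dim deg bound on S_t} and Lemma~\ref{base case r}). The only cosmetic difference is that the paper proves the zero-dimensional base case directly for arbitrary (not necessarily reduced) subschemes, which sidesteps your concern about the final section being reduced.
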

Proposition~\ref{dimension} and Proposition~\ref{degree} are proven in Section~\ref{sec: two}.
Note that the dimension and degree of $X$ only give us information about the highest-dimensional components of $X$.
Next, we find a bound on the number of $\fq$-rational points on components of $X$ of dimension smaller than $m-l$. These will be referred to as low-dimensional components of $X$.
\begin{proposition}\label{Prop: Components of codim geq l}
Suppose $X$ is the vanishing set of a collection of homogeneous polynomials in $\fqb[X_0,\dots,X_m]$, each having degree at most $d$. Let $Y$ be the union of the irreducible components of $X$ that have dimensions at most $k$. Then, for $q\geq d$, we have
$$|Y(\fq)|\leq d^{m-k} \pi_{k}.$$
\end{proposition}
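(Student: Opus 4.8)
The plan is to combine a point-counting bound for individual varieties with a weighted B\'ezout-type estimate on the degrees of the components of $X$, and then to optimize. Two ingredients drive the argument. First, for any irreducible projective variety $Z\subseteq\pmfqb$ one has $|Z(\fq)|\le \deg(Z)\,\pi_{\dim Z}$ (a consequence of the estimates of Lachaud and Rolland in \cite{lachaud2015number}); summing over the irreducible components of $Y$ gives
\[
|Y(\fq)|\le \sum_{j=0}^{k}\deg_j(Y)\,\pi_j .
\]
Second, I claim the \emph{weighted} degree estimate
\[
\sum_{j=0}^{m}\deg_j(X)\,d^{\,j}\le d^{\,m}.
\]
This is considerably stronger than the per-dimension bounds $\deg_j(X)\le d^{\,m-j}$, and it is exactly what is needed: the naive estimate obtained by pairing $\deg_j(X)\le d^{m-j}$ with $\pi_j$ overshoots $d^{m-k}\pi_k$ by a factor growing like $k+1$ when $q$ is close to $d$. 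The point of the weighted bound is precisely to rule out large-degree components occurring simultaneously in several dimensions.

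To prove the weighted estimate, write $X=V(f_1,\dots,f_s)$ with $\deg f_i\le d$ and induct on $s$, setting $X_i=V(f_1,\dots,f_i)$ and $X_0=\pmfqb$. For $X_0$ the left-hand side equals $d^m$. Passing from $X_{i-1}$ to $X_i=X_{i-1}\cap V(f_i)$, consider each irreducible component $W$ of $X_{i-1}$, of dimension $j$ and degree $\delta$. If $W\subseteq V(f_i)$ then $W$ survives and contributes $\delta\, d^{\,j}$ as before. If $W\not\subseteq V(f_i)$, then $W\cap V(f_i)$ is a hypersurface section of $W$, pure of dimension $j-1$ and of total degree at most $(\deg f_i)\,\delta\le d\,\delta$, so its contribution is at most $d\delta\cdot d^{\,j-1}=\delta\,d^{\,j}$. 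In either case the contribution of $W$ does not increase, and since every component of $X_i$ is contained in one of these pieces, $\sum_j\deg_j(X_i)\,d^{\,j}\le\sum_j\deg_j(X_{i-1})\,d^{\,j}$ (overcounting only helps the upper bound). Hence $\sum_j\deg_j(X)\,d^{\,j}\le d^m$, and because $Y$ is a union of components of $X$ we also get $\sum_{j=0}^{k}\deg_j(Y)\,d^{\,j}\le d^m$.

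Finally I would optimize. Among all $(a_0,\dots,a_k)$ with $a_j\ge 0$ and $\sum_{j}a_j d^{\,j}\le d^m$, the quantity $\sum_j a_j\pi_j$ is maximized by placing the entire budget on the index with the largest ratio $\pi_j/d^{\,j}$. Here the hypothesis $q\ge d$ enters: from $\pi_{j+1}=q\pi_j+1$ one gets $\pi_{j+1}/d^{\,j+1}\ge (q/d)\,\pi_j/d^{\,j}\ge \pi_j/d^{\,j}$, so the ratio is nondecreasing and the maximum is attained at $j=k$, giving $a_k=d^{\,m-k}$ and optimal value $d^{\,m-k}\pi_k$. Applying this with $a_j=\deg_j(Y)$ yields $|Y(\fq)|\le\sum_{j=0}^k\deg_j(Y)\,\pi_j\le d^{\,m-k}\pi_k$, as desired. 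The main obstacle is the weighted degree estimate: the per-dimension B\'ezout bounds alone are too weak, and the key insight is that the $d^{\dim}$-weighted total degree is monotone nonincreasing under successive hypersurface sections.
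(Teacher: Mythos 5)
Your proof is correct and takes essentially the same approach as the paper: your weighted degree estimate $\sum_{j}\deg_j(X)\,d^{\,j}\le d^{\,m}$ is exactly the paper's Proposition~\ref{Prop: bound on the weighted sum} (after the reindexing $j\mapsto m-j$), proved by the same induction on successive hypersurface sections, and your final optimization step is the paper's observation that $\pi_j/d^{\,j}$ is nondecreasing for $q\ge d$, combined with the Lachaud--Rolland bound.
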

Proposition \ref{Prop: Components of codim geq l} is proven in Section~\ref{sec: lower dim components} using intersection theory. The bound provided by Proposition~\ref{Prop: Components of codim geq l} is not sharp and is independent of the number of polynomials.

In Section~\ref{sec: case when X does not contain a linear subspace}, we prove the case of Theorem \ref{Thm: complete GDC conjecture} where $X$ does not contain a linear subspace of dimension $(m-l)$ defined over $\fq$ (recall that $l$ is determined by which range $r$ is in). We use results of Cafure and Matera from \cite{cafure2006improved} that bound the number of $\fq$-rational points of a variety in terms of its dimension and degree. Along with Proposition~\ref{dimension} and Proposition~\ref{degree} we obtain a bound on the number of $\fq$-rational points on components of $X$ of dimension $(m-l)$. We use Proposition~\ref{Prop: Components of codim geq l} to bound the number of $\fq$-rational points on the lower-dimensional components of $X$. The assumption that $X$ does not have a linear subspace of dimension $m-l$ defined over $\fq$ allows us to get a very good bound on the number of $\fq$-rational points on the $(m-l)$-dimensional components of $X$. Assuming that $q$ is sufficiently large, we can prove the case of Theorem \ref{Thm: complete GDC conjecture} where $X$ does not contain a linear subspace of dimension $m-l$ defined over $\fq$.

In Section~\ref{sec: if X contains a linear subspace}, we prove the case of Theorem~\ref{Thm: complete GDC conjecture} when $X$ contains a linear subspace of dimension $(m-l)$ defined over $\fq$. In this case, equality can actually hold, so we need to be very precise in dealing with components of all dimensions. We consider the complement of the $(m-l)$-dimensional linear subspace in $X$. We are able to divide this complement into a number of affine algebraic sets and apply Theorem~\ref{Affine result} to them. This leads to very precise estimates on components of all dimensions. With a technical combinatorial argument, we complete the proof of the case of Theorem~\ref{Thm: complete GDC conjecture} when $X$ contains a linear subspace of dimension $(m-l)$ defined over $\fq$.
Section~\ref{sec: case when X does not contain a linear subspace} and Section~\ref{sec: if X contains a linear subspace} together complete the proof of Theorem~\ref{Thm: complete GDC conjecture}.

We also show that Conjecture~\ref{conjecture dqr} is true for $q\geq (d-1)^2$. This corresponds to the case of Conjecture \ref{Conj: complete GDC} where $l=1$. This is done in Section~\ref{sec: proof for incomplete gdc}.

\begin{theorem}\label{Thm l=1 with e}
Suppose $m\geq 2$, $d\geq 2$, $0\leq e\leq d-2$ and $\binom{m+e}{e}< r \leq \binom{m+e+1}{e+1}$.
If $q\geq \max\{d+e+\frac{e^2-1}{d-(e+1)}, d-1+e^2-e\}$, then we have
$$e_r(d,m)= H_r(d-1,m)+\pi_{m-1}.$$
\end{theorem}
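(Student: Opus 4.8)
The plan is to prove the matching upper bound, since Proposition~\ref{Lower bound for erdm}, applied in the $l=1$ situation so that $j=r$, already gives $e_r(d,m)\ge H_r(d-1,m)+\pi_{m-1}$. Concretely, I would fix linearly independent $F_1,\dots,F_r\in S_d(m)$, set $X=V(F_1,\dots,F_r)$, and show $|X(\fq)|\le H_r(d-1,m)+\pi_{m-1}$. The hypothesis $\binom{m+e}{e}<r\le\binom{m+e+1}{e+1}$ places $r$ in the $l=1$ range, and a short binomial computation identifies the relevant degree parameter as $c=d-e-1$ (so that the conjectured formula has leading term $c\,q^{m-1}$): Proposition~\ref{dimension} then yields $\dim X\le m-1$, and Proposition~\ref{degree} yields $\deg_{m-1}(X)\le c$. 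Following the dichotomy used for the main theorem, I would split according to whether or not $X$ contains a hyperplane defined over $\fq$.

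If $X$ contains a hyperplane $H=V(L)$ with $L$ a linear form over $\fq$, then every $F_i$ vanishes on $H$; since $(L)$ is prime and radical, the Nullstellensatz forces $L\mid F_i$, so $F_i=L\,G_i$ with $G_i\in S_{d-1}(m)$. Multiplication by $L$ is injective, so the $G_i$ remain linearly independent, and $X=H\cup V(G_1,\dots,G_r)$. After an $\fq$-linear change of coordinates sending $L$ to $x_0$, the points of $X$ lying off $H$ are exactly the affine $\fq$-zeros of the dehomogenizations $g_i(x_1,\dots,x_m)=G_i(1,x_1,\dots,x_m)$, which are linearly independent in $T_{\le d-1}(m)$ because dehomogenization is a linear isomorphism. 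Hence $|X(\fq)|=\pi_{m-1}+|Z(g_1,\dots,g_r)(\fq)|\le \pi_{m-1}+e_r^{\A}(d-1,m)=\pi_{m-1}+H_r(d-1,m)$ by Theorem~\ref{Affine result}, valid as soon as $q\ge d$. This reproduces the conjectured value exactly, consistent with equality being attained precisely in this case.

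The remaining case, where $X$ contains no $\fq$-hyperplane, is where the real work lies. Here I would separate $X$ into its codimension-one part (of degree $\le c$ by Proposition~\ref{degree}) and its components of dimension $\le m-2$; the latter contribute at most $d^{2}\pi_{m-2}$ points by Proposition~\ref{Prop: Components of codim geq l}. For the codimension-one part, the key observation is that every absolutely irreducible component defined over $\fq$ now has degree at least $2$, since a degree-one such component would be an $\fq$-hyperplane, while $\fq$-points cannot accumulate on components not defined over $\fq$ (such points would lie on lower-dimensional intersections with Galois conjugates). Applying the Cafure--Matera estimates from \cite{cafure2006improved} to each such component, the leading term of the codimension-one count becomes at most $\lfloor c/2\rfloor\,q^{m-1}$ rather than $c\,q^{m-1}$, leaving a gap of at least $\lceil c/2\rceil\,q^{m-1}$ below the conjectured leading term; the point is to verify that this gap dominates all error terms once $q$ exceeds the stated threshold.

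I expect the main obstacle to be pinning down the explicit bound on $q$, and the extreme cases drive the estimate. When $c=1$, that is $e=d-2$, the codimension-one part must be empty in this case, so $\dim X\le m-2$ and the whole count is governed by the term $d^{2}\pi_{m-2}$ of Proposition~\ref{Prop: Components of codim geq l}; comparing this with the conjectured leading term $q^{m-1}$ forces $q$ to have size roughly $d^{2}$, which is exactly the origin of the worst-case bound $(d-1)^2$. For general $e$ one must balance three contributions --- the reduced codimension-one count, the Cafure--Matera error terms, and the $d^{2}\pi_{m-2}$ bound for the low-dimensional part --- against the full conjectured polynomial $H_r(d-1,m)+\pi_{m-1}$, retaining its lower-order coefficients and not merely its leading term. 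Carrying out this comparison for the worst $r$ in the range is what produces the two explicit quantities in $q\ge\max\{d+e+\tfrac{e^2-1}{d-e-1},\,d-1+e^2-e\}$. The delicate part should be the uniform bookkeeping of error terms across the range of $r$, rather than any fresh conceptual difficulty, since the two cases above already supply all the geometric input.
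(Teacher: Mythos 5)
Your first case (where $X$ contains an $\fq$-hyperplane $V(L)$) is correct and is exactly the paper's Lemma~\ref{linear factor}: $L$ divides every $F_i$, and Theorem~\ref{Affine result} applied to the dehomogenized $G_i$ gives precisely $H_r(d-1,m)+\pi_{m-1}$. The gap is in your second case. The Cafure--Matera input you want to use is only valid under $q>2(k+1)\delta^2$, which for an $(m-1)$-dimensional component reads $q>2m\delta^2$ and therefore depends on $m$; worse, the error term in Corollary~\ref{Cor bound} is $3.2\,\delta^{13/3}q^{m-3/2}$, and making this smaller than a fixed fraction of $q^{m-1}$ forces $q\gtrsim\delta^{14/3}$. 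This is exactly why Theorem~\ref{Thm: complete GDC conjecture} carries the hypothesis $q\geq\max\{2(m-l+1)c^2+1,\,8d^{l+1}/c,\,164c^{14/3}\}$. The whole point of Theorem~\ref{Thm l=1 with e} is to replace that by the $m$-independent bound $q\geq(d-1)^2$ (roughly), and your route cannot reach it. Even your crucial extreme case fails numerically: when $e=d-2$ the target value can be as small as $\pi_{m-1}$ (since $H_r(d-1,m)$ can vanish at the bottom of that range), while Proposition~\ref{Prop: Components of codim geq l} only gives $d^2\pi_{m-2}$ for the low-dimensional part, and at $q=(d-1)^2<d^2$ one has $d^2\pi_{m-2}>q\pi_{m-2}+1=\pi_{m-1}$, so the comparison you propose does not close.

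The paper avoids Lang--Weil-type estimates entirely in this regime. It inducts on $m$ using the invariant $t_W=\max_L\dim(W\cap LS_{d-1}(m))$ and splits into four ranges of $t_W$: the case $t_W=r$ is your hyperplane case; small $t_W$ is handled by cutting with a hyperplane and invoking the inductive hypothesis together with Lemma~\ref{any hyperplane}; large $t_W$ uses the recursion of Lemma~\ref{rec2}; and the intermediate range (Lemma~\ref{medium t}) factors out $G=\gcd(W)$ and controls $|V(G)(\fq)|$ by the elementary Homma--Kim bound (Proposition~\ref{no linear divisor hypersurface}), which needs no largeness of $q$ at all. It is this combinatorial/inductive machinery, not a refined bookkeeping of Cafure--Matera error terms, that produces the quantities $d+e+\frac{e^2-1}{d-(e+1)}$ and $d-1+e^2-e$. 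To salvage your approach you would have to either accept a bound on $q$ depending on $m$ and growing like $d^{14/3}$ (i.e., reprove only a weak form of the theorem), or supply a genuinely new argument for the no-hyperplane case.
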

By varying the range of $e$, Theorem \ref{Thm l=1 with e} leads to the following corollary:
\begin{corollary}
Suppose that we are given $m,d\geq 1$, $1\leq r\leq \binom{m+d-1}{d-1}$. Then for $q\geq (d-1)^2$, we have
$$e_r(d,m)=H_r(d-1,m)+\pi_{m-1}.$$    
\end{corollary}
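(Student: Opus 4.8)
The plan is to deduce the corollary directly from Theorem~\ref{Thm l=1 with e}, whose hypotheses allow $e$ to range over $0\le e\le d-2$. The first observation is that the half-open intervals $\big(\binom{m+e}{e},\binom{m+e+1}{e+1}\big]$, as $e$ runs from $0$ to $d-2$, are consecutive and exactly tile $\{2,3,\dots,\binom{m+d-1}{d-1}\}$, since the upper endpoint for the value $e$ coincides with the lower endpoint for $e+1$. Hence, for $m,d\ge 2$, every $r$ with $2\le r\le \binom{m+d-1}{d-1}$ lies in a unique such interval, and Theorem~\ref{Thm l=1 with e} yields the formula $e_r(d,m)=H_r(d-1,m)+\pi_{m-1}$ provided $q$ is at least the stated maximum for that particular $e$. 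The content of the corollary is then that the single threshold $q\ge(d-1)^2$ dominates all of these $e$-dependent thresholds at once.

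Before checking that, I would dispose of the cases not covered by the theorem. The value $r=1$ corresponds to a single degree-$d$ hypersurface; here $\omega_1(d-1,m)=(d-1,0,\dots,0)$, so $H_1(d-1,m)+\pi_{m-1}=(d-1)q^{m-1}+\pi_{m-1}=dq^{m-1}+\pi_{m-2}$, which is exactly the Serre--S\o{}rensen value of $e_1(d,m)$, settling this case for all $q$. The degenerate cases $d=1$ (where the range forces $r=1$ and the formula reduces to $\pi_{m-1}$, the count on a hyperplane) and $m=1$ are already known, being the instances of Conjecture~\ref{conjecture dqr} proved by Beelen, Datta and Ghorpade. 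Thus it remains only to justify the uniform threshold.

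The main technical point is therefore to show that $q\ge (d-1)^2$ forces $q\ge\max\{d+e+\tfrac{e^2-1}{d-(e+1)},\,d-1+e^2-e\}$ for every $e$ with $0\le e\le d-2$; equivalently, that both expressions are bounded above by $(d-1)^2$ on this range. For the second expression this is the inequality $e^2-e\le (d-1)(d-2)$, which holds since $e\le d-2$. For the first, I would rewrite the condition (after clearing the positive denominator $d-1-e$) as $(d-1)^2(d-1-e)\ge (d+e)(d-1-e)+e^2-1$ and simplify; I expect the difference of the two sides to factor as $d(d-2)(d-2-e)$, which is nonnegative precisely because $d\ge 2$ and $0\le e\le d-2$. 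The binding case is $e=d-2$, where the first expression equals $(d-1)^2$ exactly; this is what pins the clean threshold at $(d-1)^2$ and is the only place the bound is tight. I anticipate the only obstacle to be bookkeeping rather than conceptual: verifying that one clean bound dominates the whole family of $e$-dependent bounds from Theorem~\ref{Thm l=1 with e}, while correctly absorbing the boundary values $r=1$, $d=1$ and $m=1$ that lie outside the theorem's hypotheses.
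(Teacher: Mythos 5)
Your proposal is correct and follows exactly the route the paper intends: the paper's entire justification is the sentence ``by varying the range of $e$, Theorem~\ref{Thm l=1 with e} leads to the following corollary,'' and you have simply supplied the details — the tiling of $\{2,\dots,\binom{m+d-1}{d-1}\}$ by the intervals indexed by $e$, the verification that $(d-1)^2$ dominates both $e$-dependent thresholds (with the correct factorization $d(d-2)(d-2-e)$ and tightness at $e=d-2$), and the disposal of the boundary cases $r=1$, $d=1$, $m=1$ via known results.
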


We defer the proofs of many technical lemmas to Appendix~\ref{Sec: Appendix}.

\section{Dimension and degree}\label{sec: two}

When we write that $X$ is a projective subscheme of $\mathbb{P}^m$, we mean that $X$ is of the form $\Proj(\fqb[x_0,\dots,x_m]/I)$ for some homogeneous ideal $I$. If $X=\Proj(\fqb[x_0,\dots,x_m]/I)$, then $I(X)$ is the saturation of $I$ and $\Proj(\fqb[x_0,\dots,x_m]/I)=\Proj(\fqb[x_0,\dots,x_m]/I(X))$. See \cite[Section 5]{Gathmann2002} for a reference.
The coordinate ring of $X$ is the graded ring $S(X)=\fqb[x_0,\dots,x_m]/I(X)$ and its degree $t$ part is denoted as $S_t(X)$. We have
\[\dim_{\fqb}((\fqb[x_0,\dots,x_m]/I)_t) \geq \dim_{\fqb}(S_t(X)).\]

In this section, our goal is to prove Proposition~\ref{dimension} and Proposition~\ref{degree}.
These two propositions are saying that if $\dim(I(X)_{t})$ is big, then the dimension and degree of $X$ are small. Note that $\dim(I(X)_{t})$ being big is the same as $\dim(S(X)_{t})$ being small. Therefore, the contrapositive statement is that if the dimension and degree of $X$ are big, then $\dim(S(X)_{t})$ is also big.
We will thus consider projective subschemes of $\pmfqb$ of given dimension and degree, and find lower bounds for the Hilbert function in terms of the dimension and degree. We start by considering the zero-dimensional case.

\begin{lemma}\label{base case r}
Let $X$ be a zero-dimensional projective subscheme of $\pmfqb$ of degree $c$.
\begin{enumerate}
    \item If $0\leq t \leq c-1$, then $\dim_{\fqb}(S_{t}(X)) \geq t+1$.
    \item If $t \geq c-1$, then $\dim_{\fqb}(S_t(X))=c$.
\end{enumerate}    
\end{lemma}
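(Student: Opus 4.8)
The plan is to analyze the Hilbert function $h(t):=\dim_{\fqb}(S_t(X))$ of the coordinate ring $A:=S(X)=\fqb[x_0,\dots,x_m]/I(X)$ by cutting $X$ with a generic hyperplane. Since $X$ is zero-dimensional, $A$ is a graded ring of Krull dimension $1$, and because $I(X)$ is saturated the irrelevant ideal $\mathfrak{m}=(x_0,\dots,x_m)$ is not an associated prime of $A$. Consequently every associated prime of $A$ is a homogeneous prime strictly contained in $\mathfrak{m}$, so it meets the degree-one part $A_1$ in a proper subspace. As $\fqb$ is infinite, a general linear form $\ell$ avoids this finite union of proper subspaces and is therefore a non-zero-divisor on $A$. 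Geometrically, $\ell$ is any linear form whose hyperplane misses the finitely many points of $X$.

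First I would record the short exact sequence of graded modules
\[
0 \to A(-1) \xrightarrow{\ \ell\ } A \to B \to 0, \qquad B:=A/\ell A,
\]
and read off its degree-$t$ piece to obtain $h_B(t)=h(t)-h(t-1)$, where $h_B(t):=\dim_{\fqb}(B_t)$. Because $\ell$ is a non-zero-divisor, $B$ has Krull dimension $0$, hence is a finite-dimensional standard graded $\fqb$-algebra with $B_0=\fqb$. Telescoping the first-difference identity, using $h(t)=0$ for $t<0$ and $h(t)=\deg(X)=c$ for $t\gg 0$ (the Hilbert polynomial of a zero-dimensional scheme of degree $c$ is the constant $c$), gives $\dim_{\fqb}(B)=\sum_{t\ge 0}h_B(t)=c$.

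The key structural input about $B$ is that, being standard graded, it is generated in degree one, so $B_{t+1}=B_1\cdot B_t$; hence once $h_B(t)=0$ it vanishes in all larger degrees. Thus the nonzero values of $h_B$ occupy an initial segment $\{0,1,\dots,\sigma\}$ of degrees, each contributing at least $1$ to the total $\sum_t h_B(t)=c$, so $\sigma\le c-1$, i.e. $h_B(t)=0$ for $t\ge c$. Summing $h(t)=\sum_{s=0}^{t}h_B(s)$ then yields both claims. For part (2), when $t\ge c-1$ every nonzero $h_B(s)$ has already been summed, so $h(t)=\sum_{s\ge 0}h_B(s)=c$. For part (1), if $0\le t\le c-1$ then either $h(t)=c\ge t+1$, or $h(t)<c$; in the latter case $h_B(s)\ge 1$ for all $0\le s\le t$ (a vanishing $h_B(s)$ with $s\le t$ would force $h$ to have already stabilized at $c$ by degree $t$, contradicting $h(t)<c$), giving $h(t)=\sum_{s=0}^{t}h_B(s)\ge t+1$.

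The main obstacle I anticipate is justifying the linear non-zero-divisor cleanly: verifying that saturatedness of $I(X)$ removes $\mathfrak{m}$ from the associated primes and that prime avoidance by linear forms is available over the infinite field $\fqb$, together with the identification $\dim_{\fqb}(B)=\deg(X)$. Once the generic hyperplane section is in hand, the remaining bookkeeping on the first-difference sequence $h_B$ is elementary.
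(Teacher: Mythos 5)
Your proof is correct and is essentially the paper's argument in different packaging: both choose a hyperplane (equivalently, a linear form) missing the points of $X$ and exploit degree-one generation to show that once the first difference $h(t)-h(t-1)$ vanishes it vanishes in all later degrees, so the Hilbert function must climb by at least one per degree until it stabilizes at $c$. The paper tracks this difference via the dehomogenized filtration $R_{\leq t}$ (where it equals $\dim(R_{\leq t}/R_{\leq t-1})$) while you track it via the graded Artinian reduction $B=A/\ell A$; these are the same quantity, and your justification that a generic linear form is a non-zero-divisor (saturation removes the irrelevant ideal from the associated primes, then prime avoidance over the infinite field $\overline{\mathbb{F}}_q$) is sound.
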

\begin{proof}
There is a hyperplane $\HH \subseteq \pmfqb$ such that $\HH$ does not contain any point of $X$ (since $\dim(X)=0$). By a linear change of coordinates, we can assume that $\HH=V(x_0)$. Then $X=X\setminus \HH$, and $X$ is a zero-dimensional affine scheme, $X=\Spec(R)$ with $R=\fqb[x_1, \dots, x_n]/I$, where $I$ is the dehomogenization of $I(X)$ with respect to $x_0$. In addition, $I(X)$ is the homogenization of $I$ with respect to $x_0$ (see \cite[Lemma 6.1.4]{Gathmann2002}). Therefore, $\dim_{\fqb}(I(X)_d)=\dim_{\fqb}(I_{\leq d})$ as $\fqb$ vector spaces. Let $R_{\leq t}=\fqb[x_1, \dots, x_m]_{\leq t}/I_{\leq t}$.
We have an isomorphism of $\fqb$ vector spaces: $S_t(X) \to R_{\leq t}$ given by homogenization and dehomogenization with respect to $x_0$.  

Now, suppose that for some $0\leq t\leq c-1$, we have $\dim(S_t) \leq t$. Then $\dim(R_{\leq t}) \leq t$ and therefore there exists some $0 \leq i \leq t-1$ such that $\dim(R_{\leq i})=\dim(R_{\leq i+1})$. This means that $R_{\leq i}=R_{\leq i+1}$.
We want to show by induction that for any $j \geq i, R_{\leq j}=R_{\leq i}$. Suppose that this is true for $j-1$. Consider a monomial $M_j$ of degree $j$ and write $M_j=M_{i+1}M_{j-i-1}$, where $M_{i+1}$ is a monomial of degree $i+1$. Since $R_{\leq i}=R_{\leq i+1}$, we know that $M_{i+1}$ is congruent mod $I$ to some polynomial of degree $\leq i$ and thus $M_j$ is congruent mod $I$ to some polynomial of degree $\leq j-1$. Since this holds for each monomial $M_j$ of degree $j$, we conclude that $R_{\leq j}=R_{\leq j-1}=R_{\leq i}$. This finishes the inductive step, and hence $R_{\leq j}=R_{\leq i}$ for every $j\geq i$.
Since $X$ is a zero-dimensional subscheme, $\deg(X)=\dim(R)$ is finite and equal to $\dim(R_{\leq d})$ for sufficiently large $d$. Therefore, $c=\deg(X)=\dim(R)=\dim(R_{\leq i}) = \dim(R_{\leq t}) \leq t$. This contradicts the fact that $t\leq c-1$. Therefore, we see that for every $0\leq t\leq c-1$, we have $\dim_{\fqb}(S_t(X))\geq t+1$.

In particular, we have $\dim(S_{c-1}(X))\geq c$. Therefore, for $t\geq c-1$, we have
$$c\leq \dim(R_{\leq c-1})\leq \dim(R_{\leq t})\leq \dim(R)=c.$$
This means that $\dim(S_t(X))=c$.
\end{proof}

Next, we consider projective subschemes of $\pmfqb$ of arbitrary dimension. We will induct on the dimension of the subscheme, and Lemma~\ref{base case r} will serve as the base case.

\begin{proposition}\label{Prop: dim deg bound on S_t}
Let $X$ be a projective subscheme of $\pmfqb$ with $\dim(X)=k$ and $\deg(X)=c$. Then,
\begin{enumerate}
    \item For $t \leq c-1$, we have $\dim(S_t(X)) \geq \binom{t+k+1}{k+1}$.
    \item For $t \geq c$, we have $\dim(S_t(X)) \geq \binom{t+k+1}{k+1}-\binom{t+k+1-c}{k+1}$.
\end{enumerate}
\end{proposition}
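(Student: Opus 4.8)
The plan is to prove Proposition~\ref{Prop: dim deg bound on S_t} by induction on the dimension $k$ of the subscheme $X$. The base case $k=0$ is exactly Lemma~\ref{base case r}: for a zero-dimensional subscheme the bound $\dim(S_t(X))\geq t+1=\binom{t+1}{1}$ for $t\leq c-1$, and $\dim(S_t(X))=c=\binom{t+1}{1}-\binom{t+1-c}{1}$ for $t\geq c$ (using that $\binom{t+1-c}{1}=t+1-c$), match the claimed formulas with $k=0$. So it remains to carry out the inductive step, passing from dimension $k-1$ to dimension $k$.

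For the inductive step, the natural tool is a generic hyperplane section. First I would choose a hyperplane $\HH=V(H)\subseteq\pmfqb$ that meets $X$ properly, so that the scheme-theoretic intersection $X\cap\HH$ has dimension $k-1$ and degree equal to $\deg(X)=c$ (by B\'ezout / the fact that intersecting with a generic hyperplane preserves degree and drops dimension by one). The key algebraic device is the exact sequence relating the Hilbert functions of $X$ and $X\cap\HH$: multiplication by the linear form $H$ gives a map $S_{t-1}(X)\xrightarrow{\cdot H} S_t(X)$ whose cokernel surjects onto $S_t(X\cap\HH)$. If $H$ is chosen to be a non-zerodivisor on the homogeneous coordinate ring (which one can arrange when $X$ has no embedded components of top dimension, or after reducing to that case), then this map is injective and one gets the short exact sequence
\begin{equation*}
0\to S_{t-1}(X)\xrightarrow{\cdot H} S_t(X)\to S_t(X\cap\HH)\to 0,
\end{equation*}
yielding the recursion $\dim S_t(X)=\dim S_{t-1}(X)+\dim S_t(X\cap\HH)$. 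More robustly, even without injectivity one has the inequality $\dim S_t(X)\geq \dim S_{t-1}(X)+\dim S_t(X\cap\HH)$ from the surjection alone together with a lower bound on the image, which is all the proposition requires.

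With the recursion $\dim S_t(X)\geq \dim S_{t-1}(X)+\dim S_t(X\cap\HH)$ in hand, I would apply the induction hypothesis to $X\cap\HH$, which has dimension $k-1$ and degree $c$, and then telescope. For the range $t\leq c-1$ the inductive hypothesis gives $\dim S_t(X\cap\HH)\geq\binom{t+k}{k}$, and summing the recursion from a base value up to $t$ collapses via the hockey-stick identity $\sum_{s=0}^{t}\binom{s+k}{k}=\binom{t+k+1}{k+1}$ to produce exactly $\binom{t+k+1}{k+1}$. For $t\geq c$ one splits the sum at $s=c$: the terms with $s\leq c-1$ contribute the ``full'' binomial growth while the terms with $s\geq c$ contribute the deficit $\binom{s+k}{k}-\binom{s+k-c}{k}$, and a second application of the hockey-stick identity assembles these into $\binom{t+k+1}{k+1}-\binom{t+k+1-c}{k+1}$. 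The arithmetic here is routine once the summation ranges are set up correctly.

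The main obstacle I anticipate is technical rather than conceptual: ensuring that the hyperplane section behaves well scheme-theoretically. Specifically, one must choose $H$ so that $\deg(X\cap\HH)=\deg(X)$ and $\dim(X\cap\HH)=k-1$ \emph{and} so that the multiplication-by-$H$ map gives a clean enough sequence to extract the Hilbert-function inequality. Embedded or lower-dimensional components of $X$, and the fact that $X$ is only a subscheme (not necessarily reduced or equidimensional), can interfere with finding a non-zerodivisor of degree one; over the algebraically closed field $\fqb$ a generic $H$ avoids the finitely many associated primes of top dimension, which suffices for the inequality version of the recursion. Carefully justifying the existence of such a generic $H$, and verifying that the induction hypothesis applies to the section (in particular that its degree is still $c$ and not larger), is where the real care is needed; everything downstream is a binomial-coefficient computation.
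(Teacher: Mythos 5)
Your proposal is correct and follows essentially the same route as the paper: induction on $k$ with Lemma~\ref{base case r} as the base case, a hyperplane section avoiding the components of $X$, the short exact sequence given by multiplication by the linear form to get $\dim S_t(X)-\dim S_{t-1}(X)\geq \dim S_t(X\cap\HH)$, and telescoping via the hockey-stick identity. The technical points you flag (equidimensionality, choosing the hyperplane so the section keeps degree $c$ and drops dimension by one) are exactly the ones the paper handles, by first reducing to the union of top-dimensional components.
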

\begin{proof}
Note that if we prove this result for equidimensional subschemes, then it will automatically follow for all subschemes. This is because we can start with an arbitrary subscheme $X$ and let $X_1$ be the union of the components of $X$ of dimension $\dim(X)$. Note that $\dim(X_1)=\dim(X)$, $\deg(X_1)=\deg(X)$, and $\dim(S_t(X))\geq \dim(S_t(X_1))$. Therefore, if the result is proved for $X_1$, then it will automatically follow for $X$.

Therefore, we will prove this for equidimensional subschemes by induction on $k$, the base case was proven in Lemma \ref{base case r}.
Now suppose that the result is known for $k-1$. Let $X$ be an equidimensional projective subscheme of $\pmfqb$ with $\dim(X)=k$ and $\deg(X)=c$. We choose a hyperplane $\HH$ that does not contain any irreducible component of $X$, then $X \cap \HH$ has degree $c$ and it is equidimensional with $\dim(X \cap \HH)=k-1$. After linear change of variables, assume $\HH=V(x_0)$.
Denote $h_X(t)=\dim(S_t(X))$ and $h_{X\cap \HH}(t)=\dim(S_t(X\cap \HH))$.
Since no irreducible component of $X$ is in $\HH=V(x_0)$, we have the exact sequence 
$$0 \to \fqb[x_0, \dots, x_m]/I(X) \xhookrightarrow{x_0} \fqb[x_0, \dots, x_m]/I(X)  \to \fqb[x_0, \dots, x_m]/\langle I(X)+(x_0)\rangle \to 0.$$
This means that
$$h_X(t)-h_X(t-1) =\dim\Big(\big(\fqb[x_0, \dots, x_m]/\langle I(X)+(x_0)\rangle\big)_t\Big) \geq h_{X \cap \HH}(t).$$
Thus $h_X(t)\geq 1+\sum_{j=1}^t h_{X \cap \HH}(j)$.
By induction hypothesis, we know that 
\begin{itemize}
    \item For $j \leq c-1$, we have $h_{X\cap \HH}(j) \geq \binom{j+k}{k}$.
    \item For $j \geq c$, we have $h_{X\cap \HH}(j) \geq \binom{j+k}{k}-\binom{j+k-c}{k}$.
\end{itemize}

Next, consider some $t \leq c-1$.
For $j \leq t$, we have $h_{X \cap \HH}(j)\geq \binom{j+k}{k}$. Therefore, \[h_X(t) \geq \sum_{j=0}^t \tbinom{j+k}{k}=\tbinom{t+k+1}{k+1}.\]

Next consider some $t \geq c$. We have
\begin{align*}
    h_X(t)&\geq 1+\sum_{j=1}^th_{X \cap \HH}(j) 
     \geq 1+ \sum_{j=1}^{c-1}\tbinom{j+k}{k}+\sum_{j=c}^t\tbinom{j+k}{k}-\tbinom{j+k-c}{k}\\
    &=\sum_{j=0}^{t}\tbinom{j+k}{k}- \sum_{i=0}^{t-c} \tbinom{k+i}{k}
    =\tbinom{t+k+1}{k+1}-\tbinom{t+k+1-c}{k+1}.\qedhere
\end{align*}
\end{proof}

Note that in both cases (whether $t\leq c-1$ or $t\geq c$), we have
\[\dim(S_t(X))\geq \tbinom{t+k+1}{k+1}-\tbinom{t+k+1-c}{k+1}.\]
We are now ready to prove Proposition~\ref{dimension} and Proposition~\ref{degree}. Our proofs will rely on Proposition~\ref{Prop: dim deg bound on S_t}.

\begin{customprop}{\ref{dimension}}
Suppose that we have $1\leq l\leq m$ and $\binom{m+d}{d}-\binom{m+d+1-l}{d} <r \leq \binom{m+d}{d}$. Given $F_1, \dots, F_r \in S_d(m)$ that are linearly independent, we have:
$$\dim(V(F_1, \dots, F_r)) \leq m-l.$$
\end{customprop}
\begin{proof}[Proof of Proposition \ref{dimension}]
Let $X=V(F_1,\dots,F_r)$. If $X=\emptyset$, then we are done, so assume that $X\neq\emptyset$.
Since $F_1,\dots,F_r\in I(X)$, we have
$$\dim(S_{d}(X))\leq \tbinom{m+d}{d}-r< \tbinom{d+m-l+1}{d}.$$
Let $k=\dim(X)$ and $c=\deg(X)$. We have $c\geq 1$, since $X\neq\emptyset$. Then by Proposition~\ref{Prop: dim deg bound on S_t}, we have
\begin{align*}
\dim(S_d(X))&\geq \tbinom{d+k+1}{k+1}-\tbinom{d+k+1-c}{k+1}
\geq \tbinom{d+k+1}{k+1}-\tbinom{d+k+1-1}{k+1}=\tbinom{d+k}{k}=\tbinom{d+k}{d}.
\end{align*}
Now, we have
$$ \tbinom{d+k}{d}\leq \dim(S_d(X)) < \tbinom{d+m-l+1}{d},$$
which means that $k< m-l+1$, that is, $\dim(X)\leq m-l$.
\end{proof}

The proof of Proposition~\ref{degree} is similar.

\begin{customprop}{\ref{degree}}
Suppose that we have $1\leq l\leq m$, $1\leq c\leq d$ and
$$\tbinom{m+d}{d}-\tbinom{m+d+1-l}{d}+\tbinom{m+d-l-c}{d-c-1} <r \leq \tbinom{m+d}{d}.$$
Then given $F_1, \dots, F_r \in S_d(m)$ that are linearly independent, we have:
$$\deg_{m-l}(V(F_1, \dots, F_r)) \leq c.$$
\end{customprop}
\begin{proof}[Proof of Proposition \ref{degree}]
Let $X=V(F_1,\dots,F_r)$. Since $F_1,\dots,F_r\in I(X)$, we have
\begin{align*}
\dim(S_{d}(X))&\leq \tbinom{m+d}{d}-r< \tbinom{d+m-l+1}{d}-\tbinom{m+d-l-c}{d-c-1}
=\tbinom{d+m-l+1}{d}-\tbinom{m+d-l-c}{m-l+1}.
\end{align*}
Now, we know from Proposition \ref{dimension} that $\dim(X)\leq m-l$. If $\dim(X)<m-l$, then $\deg_{m-l}(X)=0$ and we are done. Therefore, assume $\dim(X)=m-l$.
Let $c_1=\deg(X)$. Then by Proposition \ref{Prop: dim deg bound on S_t}, we have
\begin{align*}
\dim(S_d(X))\geq \tbinom{d+m-l+1}{m-l+1}-\tbinom{d+m-l+1-c_1}{m-l+1}.
\end{align*}
Now, we have
$$ \tbinom{m+d-l-c}{m-l+1} < \tbinom{d+m-l+1-c_1}{m-l+1},$$
which means that $m+d-l-c< d+m-l+1-c_1$, that is, $\deg(X)=c_1\leq c$.
\end{proof}

\section{Lower dimensional Components}\label{sec: lower dim components}
In this section, our goal is to prove Proposition~\ref{Prop: Components of codim geq l}. In Proposition~\ref{Prop: Components of codim geq l}, $X$ is the intersection of the vanishing sets of homogeneous polynomials of degree at most $d$. We want to prove an upper bound on the total number of $\fq$-rational points on all low-dimensional components of $X$ and the upper bound does not depend on the number of polynomials.
We recall the following proposition by Lachaud and Rolland, which gives the relation between the number of $\fq$-rational points of an algebraic set in terms of its degree and dimension.
\begin{proposition}\cite[Theorem 2.1]{lachaud2015number}\label{Lachaud bound}
If $Y$ is an algebraic set of dimension $k$, then we have
$$|Y(\fq)|\leq \sum_{i=0}^{k}\deg_i(Y) \pi_i.$$
\end{proposition}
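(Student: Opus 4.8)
The plan is to reduce to the case of an irreducible variety and then induct on its dimension, using a double-counting argument over hyperplane sections. First I would decompose $Y$ into its irreducible components over $\fqb$ and note that $|Y(\fq)|$ is at most the sum, over components $W$, of the number of $\fq$-rational points lying on $W$; grouping the components by dimension, the desired inequality follows once we establish that every irreducible variety $W\subseteq\PPP^m$ of dimension $i$ and degree $\delta$ satisfies $|W(\fq)|\leq \delta\,\pi_i$. This reduced statement is what I would prove by induction on $i$. The base case $i=0$ is immediate: such a $W$ is a single closed point of degree $1$, so $|W(\fq)|\leq 1=\pi_0$.

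For the inductive step, fix $W$ of dimension $i\geq 1$ and degree $\delta$, and count incidences $(P,H)$ where $P\in W(\fq)$ and $H$ ranges over the $\fq$-rational hyperplanes of $\PPP^m$ passing through $P$. Each $\fq$-point lies on exactly $\pi_{m-1}$ such hyperplanes, so the incidence count equals $|W(\fq)|\,\pi_{m-1}$. Summing instead over hyperplanes, each $H$ contributes $|(W\cap H)(\fq)|$. I would separate the $\fq$-hyperplanes into those containing $W$ and those that do not. If $N$ denotes the number of $\fq$-hyperplanes containing $W$, then the $\fq$-rational linear forms vanishing on $W$ span a space of dimension at most $m-i$ (as $\dim W=i$ forces the span of $W$ to have dimension $\geq i$), whence $N\leq\pi_{m-i-1}$. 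For a hyperplane $H\not\supseteq W$, the section $W\cap H$ is pure of dimension $i-1$ with $\deg_{i-1}(W\cap H)\leq\delta$ by B\'ezout, so the induction hypothesis applied componentwise gives $|(W\cap H)(\fq)|\leq\delta\,\pi_{i-1}$.

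Putting these together yields $|W(\fq)|\,(\pi_{m-1}-N)\leq(\pi_m-N)\,\delta\,\pi_{i-1}$, where $\pi_m-N$ is the number of hyperplanes not containing $W$. Since the map $N\mapsto(\pi_m-N)/(\pi_{m-1}-N)$ is increasing on the relevant range and $N\leq\pi_{m-i-1}<\pi_{m-1}$, I would bound the ratio by its value at $N=\pi_{m-i-1}$. The identities $\pi_m-\pi_{m-i-1}=q^{m-i}\pi_i$ and $\pi_{m-1}-\pi_{m-i-1}=q^{m-i}\pi_{i-1}$ make this ratio collapse exactly to $\pi_i/\pi_{i-1}$, so $|W(\fq)|\leq(\pi_i/\pi_{i-1})\,\delta\,\pi_{i-1}=\delta\,\pi_i$, closing the induction.

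The main obstacle is the bookkeeping in the inductive step, and the fact that the target inequality is tight leaves no room for slack. I must verify that $W\cap H$ is genuinely pure of dimension $i-1$ with total degree at most $\delta$, so that the induction hypothesis applies with the correct constant, and I must control $N$ precisely rather than crudely. The delicate point is that the estimate $N\leq\pi_{m-i-1}$, combined with the monotonicity of the ratio, has to reproduce the exact value $\pi_i/\pi_{i-1}$; this is where the specific arithmetic of the sequence $\pi_j$ is essential, and any looseness there would destroy the bound.
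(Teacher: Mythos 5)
Your argument is correct, and it is worth noting that the paper itself supplies no proof of this proposition: it is quoted directly from Lachaud--Rolland \cite[Theorem 2.1]{lachaud2015number}, so your write-up is a self-contained derivation rather than a variant of an argument in the text. Your reduction to the irreducible case is the standard one (and matches how $\deg_i$ is defined), and the inductive step checks out: the incidence count $|W(\fq)|\,\pi_{m-1}=\sum_H |(W\cap H)(\fq)|$, the purity and B\'ezout bound $\deg(W\cap H)\leq\delta$ for $H\not\supseteq W$, the estimate $N\leq \pi_{m-i-1}$ via the linear span of $W$ having dimension at least $i$, and the identities $\pi_m-\pi_{m-i-1}=q^{m-i}\pi_i$ and $\pi_{m-1}-\pi_{m-i-1}=q^{m-i}\pi_{i-1}$ all hold, and the monotonicity of $N\mapsto(\pi_m-N)/(\pi_{m-1}-N)$ (increasing since $\pi_m>\pi_{m-1}$, with denominator positive because $\pi_{m-i-1}<\pi_{m-1}$ for $i\geq 1$) legitimately lets you evaluate at the extreme value of $N$, collapsing the bound exactly to $\delta\pi_i$ with no slack lost. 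Amusingly, your double count over pairs $(P,\HH)$ is precisely the device this paper uses elsewhere for a different purpose --- in Corollary \ref{Cor bound} and Corollary \ref{cor: bound for non-irreducible} the authors projectivize the affine Cafure--Matera bounds by the same averaging over hyperplanes --- so your proof fits the paper's toolkit naturally; what the citation buys the authors is brevity, while your argument buys independence from the reference at the cost of the bookkeeping you describe, all of which you have handled correctly.
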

As we increase $r$, that is, intersect more hypersurfaces, we expect the dimension of the components to decrease and their degrees to increase. We will formalize this and show that a certain weighted sum of degrees of the components of $X$ remains bounded.
Using the bound on the weighted sum of the degrees of the components of the lower dimension of $X$ along with Proposition~\ref{Lachaud bound}, we will bound the number of $\fq$-rational points on components of $X$ with small dimension.

For an algebraic set $X\subseteq\pmfqb$, recall that $\deg_j(X)$ is the sum of the degrees of all the $j$ dimensional irreducible components of $X$. First, we consider how the degree of an equidimensional algebraic set changes when intersected with a degree $d$ hypersurface.
\begin{lemma}\label{degree intersection}
If $X\subseteq\pmfqb$ is an equidimensional algebraic set of dimension $k$ and $F$ is a homogeneous polynomial of degree at most $d$, then we have
$$\deg_{k-1}(X\cap V(F))+ d\deg_{k}(X\cap V(F))\leq d \deg(X).$$
Moreover, all irreducible components of $X\cap V(F)$ have dimension $k$ or $k-1$.
\end{lemma}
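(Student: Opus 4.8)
The plan is to split $X \cap V(F)$ into two cases based on whether $F$ vanishes identically on the irreducible components of $X$. Write $X = X_1 \cup \cdots \cup X_s$ as the union of its irreducible components, each of dimension $k$ with $\deg(X) = \sum_i \deg(X_i)$. For each $i$, either $F$ vanishes identically on $X_i$ (so $X_i \subseteq V(F)$ and $X_i$ contributes a $k$-dimensional component to $X \cap V(F)$), or $F$ does not vanish on $X_i$, in which case $X_i \cap V(F)$ is a proper closed subset of the irreducible $X_i$ and hence has dimension at most $k-1$. This immediately establishes the ``moreover'' clause: every irreducible component of $X \cap V(F)$ sits inside some $X_i \cap V(F)$, which has dimension $k$ (when $X_i \subseteq V(F)$) or $\leq k-1$ (otherwise), so all components have dimension $k$ or $k-1$.

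For the degree inequality, I would reindex so that $X_1, \dots, X_a$ are the components on which $F$ vanishes and $X_{a+1}, \dots, X_s$ are those on which it does not. The first group contributes exactly to $\deg_k(X \cap V(F))$: we get $\deg_k(X \cap V(F)) = \sum_{i=1}^a \deg(X_i)$ (these components appear in the intersection with their full degree, and no $k$-dimensional component can arise from the second group). For the second group, the key input is Bézout-type control on the degree of $X_i \cap V(F)$. For an irreducible variety $X_i$ of dimension $k$ and degree $\deg(X_i)$ intersected with a hypersurface $V(F)$ of degree $\deg(F) \leq d$ not containing $X_i$, the intersection $X_i \cap V(F)$ is $(k-1)$-dimensional of degree at most $\deg(F)\,\deg(X_i) \leq d\,\deg(X_i)$. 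Summing over $i$ from $a+1$ to $s$ and noting that the $(k-1)$-dimensional components of $X \cap V(F)$ all come from this second group gives
\begin{equation*}
\deg_{k-1}(X \cap V(F)) \leq \sum_{i=a+1}^s d\,\deg(X_i).
\end{equation*}

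Combining the two contributions, I would then compute
\begin{align*}
\deg_{k-1}(X \cap V(F)) + d\,\deg_k(X \cap V(F))
&\leq \sum_{i=a+1}^s d\,\deg(X_i) + d\sum_{i=1}^a \deg(X_i)\\
&= d\sum_{i=1}^s \deg(X_i) = d\,\deg(X),
\end{align*}
which is exactly the claimed bound. The weighting by $d$ on the $\deg_k$ term is what makes the estimate clean: the components that survive in full dimension are charged at the ``worst case'' rate $d$, matching the Bézout factor paid by the components that drop dimension, so both groups are uniformly bounded by $d\,\deg(X_i)$ before summing.

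The main obstacle I expect is justifying the Bézout-type degree bound $\deg(X_i \cap V(F)) \leq \deg(F)\,\deg(X_i)$ at the level of algebraic sets (reduced degrees), including the subtlety that $X_i \cap V(F)$ may itself be reducible or non-reduced, and that $\deg(F)$ may be strictly less than $d$. The cleanest route is to invoke a refined Bézout inequality (for instance, the form in Fulton's intersection theory, which bounds the sum of degrees of the irreducible components of an intersection by the product of the degrees) applied to $X_i$ and the hypersurface $V(F)$; one must take care that this bounds the \emph{geometric} degree $\deg_{k-1}$ appearing in our statement and not some intersection-multiplicity-weighted count, and that the inequality degrades gracefully when $\deg(F) < d$, which only helps since $\deg(F) \leq d$. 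Handling the edge cases where $a = 0$ (no component of $X$ lies in $V(F)$) or $a = s$ ($V(F) \supseteq X$) is routine once the two-group decomposition is in place.
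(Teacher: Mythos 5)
Your proof is correct and follows essentially the same route as the paper: split the irreducible components of $X$ according to whether they lie in $V(F)$, count the contained ones toward $\deg_k$, and apply a B\'ezout bound to the others. The only point worth tightening is the justification of the ``moreover'' clause --- ``dimension at most $k-1$'' alone does not rule out components of dimension $<k-1$; you need (as the paper's cited reference supplies) that $X_i\cap V(F)$ is \emph{pure} of dimension $k-1$ whenever $X_i\not\subseteq V(F)$ and $k\geq 1$, and the $k=0$ case should be noted separately since there the uncontained components simply disappear.
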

\begin{proof}
Let $d_1=\deg(F)\leq d$. Let $X_1,\dots,X_t$ be the irreducible components of $X$ that are contained in $V(F)$ and $Y_1,\dots, Y_s$ be the irreducible components of $X$ that are not contained in $V(F)$. 

First, suppose $k\geq 1$.
Then $X_i\cap V(F)=X_i$ and $Y_i\cap V(F)$ is equidimensional of dimension $k-1$ with degree $d_1 \deg(Y_i)$ (see \cite[Theorem 6.2.1]{Gathmann2002}). This means that $\deg_{k}(X\cap V(F))=\sum\deg(X_i)$ and $\deg_{k-1}(X\cap V(F))=d_1\sum\deg(Y_i)$. Therefore
$$\deg_{k-1}(X\cap V(F))+d_1\deg_k(X\cap V(F))=d_1 \deg(X).$$

Next, if $k=0$, then $X\cap V(F)=\bigcup X_i$. Therefore, $\deg_k(X\cap V(F))=\sum \deg(X_i)\leq \deg(X)$ and $\deg_{k-1}(X\cap V(F))=0$.
\end{proof}

Now, we consider the $r$-fold intersection of $V(F_i)$. We show that the 'weighted sum' of degrees from different dimensional components is bounded.
\begin{proposition}\label{Prop: bound on the weighted sum}
Suppose $F_1,\dots,F_r\in \fqb[x_0,\dots,x_m]$ are homogeneous polynomials of degree at most $d$ and let $X=V(F_1,\dots,F_r)$. Then we have
$$\sum_{j=1}^{m} d^{-j}\deg_{m-j}(X) \leq 1.$$
\end{proposition}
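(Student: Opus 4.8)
The plan is to prove the bound $\sum_{j=1}^m d^{-j}\deg_{m-j}(X)\leq 1$ by induction on the number of polynomials $r$, using Lemma~\ref{degree intersection} as the key inductive tool. The idea is that the quantity $\Phi(X):=\sum_{j=1}^m d^{-j}\deg_{m-j}(X)$ should be a kind of potential that does not increase as we intersect with each new hypersurface $V(F_i)$. For the base case $r=0$ we set $X=\PPP^m(\fqb)$, which is irreducible of dimension $m$, so $\deg_{m-j}(X)=0$ for all $j\geq 1$ and $\Phi(X)=0\leq 1$. Alternatively one can start at $r=1$, where $X=V(F_1)$ is a hypersurface of dimension $m-1$ and degree at most $d$, giving $\Phi(X)=d^{-1}\deg_{m-1}(X)\leq d^{-1}\cdot d=1$.

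For the inductive step, suppose the bound holds for $X'=V(F_1,\dots,F_{r-1})$ and set $X=X'\cap V(F_r)$. The main point is to control how $\Phi$ changes under intersection with a single hypersurface of degree at most $d$. I would decompose $X'$ into its equidimensional pieces, writing $X'=\bigcup_{k} X'_{(k)}$ where $X'_{(k)}$ is the union of the $k$-dimensional components, so that $\deg_k(X')=\deg(X'_{(k)})$. Intersecting each piece with $V(F_r)$ and applying Lemma~\ref{degree intersection} to the equidimensional set $X'_{(k)}$ gives, for each $k$,
\begin{equation*}
\deg_{k-1}\big(X'_{(k)}\cap V(F_r)\big)+d\,\deg_k\big(X'_{(k)}\cap V(F_r)\big)\leq d\,\deg_k(X').
\end{equation*}
Since every component of $X=X'\cap V(F_r)$ of dimension $k$ arises either as a $k$-dimensional component of some $X'_{(k')}\cap V(F_r)$ (and by the lemma only $k'=k$ can contribute a $k$-dimensional piece, while $k'=k+1$ contributes via its $(k+1-1)$-dimensional part), I can assemble the contributions dimension by dimension.

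The heart of the argument is then the telescoping estimate. Writing things in terms of $\Phi$, the inequalities above multiplied by the appropriate powers $d^{-(m-k)}$ and summed over $k$ should yield $\Phi(X)\leq \Phi(X')\leq 1$; the factor of $d$ weighting $\deg_k$ against $\deg_{k-1}$ in the lemma is exactly matched to the ratio $d$ between consecutive weights $d^{-(m-k)}$ and $d^{-(m-k+1)}$ in the definition of $\Phi$, which is what makes the sum collapse rather than grow. Concretely, if $a_k:=\deg_k(X')$ and $b_k:=\deg_k(X)$, the lemma gives $b_{k-1}+d\,b_k\leq d\,a_k$ for the contribution of the $k$-dimensional slab, and after summing $d^{-(m-k)}(b_{k-1}+d\,b_k)\leq d^{-(m-k)}d\,a_k=d^{-(m-k-1)}a_k$ the left side reorganizes into consecutive $\Phi$-terms for $X$ while the right side reproduces $\Phi$-terms for $X'$.

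The step I expect to be the main obstacle is the bookkeeping in the inductive step: Lemma~\ref{degree intersection} is stated for an equidimensional $X$, but $X'=V(F_1,\dots,F_{r-1})$ is generally not equidimensional, so I must carefully split it into equidimensional strata, apply the lemma to each, and then correctly recombine components of $X=X'\cap V(F_r)$ that may receive degree from two different strata of $X'$ (the $k$-dimensional components of $X'\cap V(F_r)$ coming from $X'_{(k)}$, and the $(k-1)$-dimensional ones coming as the lower piece of $X'_{(k)}$). I must also handle the edge effects at $k=0$ and $k=m$, and confirm that no component escapes the dimension range covered by the lemma's final claim that all components of $X'_{(k)}\cap V(F_r)$ have dimension $k$ or $k-1$. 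Once the slabs are correctly matched to the weights, the inequality chain telescopes cleanly to $\Phi(X)\leq\Phi(X')\leq 1$.
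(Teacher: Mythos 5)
Your proposal is correct and follows essentially the same route as the paper: induction on the number of polynomials, decomposition of $V(F_1,\dots,F_{r-1})$ into equidimensional strata (the paper's $\Gamma_j$), application of Lemma~\ref{degree intersection} to each stratum, and the same telescoping of the weighted sum where the factor $d$ in the lemma matches the ratio of consecutive weights $d^{-j}$. The bookkeeping you flag is handled in the paper exactly as you describe, via the identity $\deg_{m-j}(X)\leq\deg_{m-j}(\Gamma_j\cap V(F_r))+\deg_{m-j}(\Gamma_{j-1}\cap V(F_r))$.
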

\begin{proof}
We denote $X_k=V(F_1,\dots,F_k)$ and $\alpha(j,k)=\deg_{m-j}(X_k)$. We will prove the result by induction on $k$ ($1\leq k\leq r$) that
$$\sum_{j=1}^{m}d^{-j}\alpha(j,k)\leq 1.$$
When $k=1$, we have $\alpha(1,1)=\deg_{m-1}(V(F_1)) = \deg(F_1)\leq d$ and $\alpha(1,j)=0$ for $j>1$. So we are done with the base case.

Next, suppose that we know the conclusion for $k-1$. Let $\Gamma_j$ be the union of the codimension $j$ irreducible components of $X_{k-1}$. We know by Lemma \ref{degree intersection} that for each $1\leq j\leq k-1$,
$$\deg_{m-j-1}(\Gamma_j\cap V(F_k))+d\deg_{m-j}(\Gamma_j\cap V(F_k)) \leq d\deg(\Gamma_j) =d\alpha(j,k-1).$$
Now, clearly $\alpha(j,k)=\deg_{m-j}(\Gamma_j\cap V(F_k))+\deg_{m-j}(\Gamma_{j-1}\cap V(F_k))$. Therefore,
\begin{align*}
\sum_{j=1}^m d^{-j}\alpha(j,k) 
&=\sum_{j=1}^m d^{-j}\Big(\deg_{m-j}(\Gamma_j\cap V(F_k))+\deg_{m-j}(\Gamma_{j-1}\cap V(F_k))\Big)\\
&=\sum_{j=1}^{m} \Big(d^{-j}\deg_{m-j}(\Gamma_j\cap V(F_k)) +d^{-j-1}\deg_{m-j-1}(\Gamma_{j}\cap V(F_k)) \Big)\\
&=\sum_{j=1}^{m} d^{-j-1}\Big(d\deg_{m-j}(\Gamma_j\cap V(F_k)) +\deg_{m-j-1}(\Gamma_{j}\cap V(F_k)) \Big) \\
&\leq \sum_{j=1}^{m}d^{-j-1} d\alpha(j,k-1) = \sum_{j=1}^{m}d^{-j} \alpha(j,k-1) \leq 1.
\end{align*}
This completes the inductive step.
Now, for $k=r$, we see that
\[\sum_{j=1}^{m}d^{r-j} \deg_{m-j}(X)\leq 1. \qedhere\]
\end{proof}

We now apply Proposition~\ref{Prop: bound on the weighted sum} and Proposition~\ref{Lachaud bound} to our setting and obtain an upper bound on the number of $\fq$-rational points on components of $X$ of small dimension.
\begin{customprop}{\ref{Prop: Components of codim geq l}}
Suppose $X$ is the vanishing set of a collection of homogeneous polynomials in $\fqb[X_0,\dots,X_m]$, each having degree at most $d$. Let $Y$ be the union of the irreducible components of $X$ that have dimensions at most $k$. Then, for $q\geq d$, we have
$$|Y(\fq)|\leq d^{m-k} \pi_{k}.$$
\end{customprop}
\begin{proof}
First, we will show that for $j\geq m-k$, we have $d^j\pi_{m-j}\leq d^{m-k}\pi_{k}$. This is because
\[d^{j+k-m} \pi_{m-j} = d^{j+k-m} \sum_{i=0}^{m-j} q^i
\leq q^{j+k-m} \sum_{i=0}^{m-j} q^i \leq \sum_{i=0}^{k} q^i =\pi_{k}.\]
By Proposition~\ref{Lachaud bound} and Proposition~\ref{Prop: bound on the weighted sum}, we see that
\begin{align*}
|Y(\fq)|&\leq \sum_{j=m-k}^{m}\deg_{m-j}(X)\pi_{m-j} 
=\sum_{j=m-k}^{m}d^{-j}\deg_{m-j}(X) d^{j}\pi_{m-j}\\
&\leq d^{m-k} \pi_{k} \sum_{j=l}^{m} d^{-j}\deg_{m-j}(X) \leq d^{m-k} \pi_{k}. \qedhere
\end{align*}
\end{proof}

\section{If $X$ does not contain a linear subspace}\label{sec: case when X does not contain a linear subspace}
In this section, we prove the case of Theorem~\ref{Thm: complete GDC conjecture}, when $X$ does not contain a $(m-l)$-dimensional linear subspace defined over $\fq$.
In Proposition~\ref{dimension} and Proposition~\ref{degree}, we have bounded the dimension and degree of $X$. We want to turn this into an upper bound on the number of $\fq$-rational points on the $(m-l)$-dimensional components of $X$.
Proposition~\ref{Lachaud bound} tells us that this is at most $c\pi_{m-l}$, however, we need a tighter bound. We will remind the reader that we call an algebraic set irreducible if it is irreducible over $\fqb$ and varieties are irreducible.

We recall the following result of Cafure and Matera that bounds the number of $\fq$-rational points of an affine variety in terms of its degree and dimension. 
\begin{theorem}\cite[Theorem 7.1]{cafure2006improved}\label{thm: affine variety bound}\\
Let $X$ be an affine variety defined over $\fq$ of dimension $k>0$ and degree $\delta$. If $q > 2(k + 1)\delta^2$, then the following estimate holds:
$$||X(\fq)| - q^k| \leq (\delta - 1)(\delta - 2)q^{k- 1/2} + 5\delta^{13/3} q^{k-1}.$$
\end{theorem}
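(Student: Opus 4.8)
The statement is the Cafure--Matera point-counting bound for absolutely irreducible affine varieties, so I would prove it by reducing the count on the $k$-dimensional variety $X$ to counts on plane curves, where Weil's theorem is available. The plan is to fibre $X$ into a $(k-1)$-parameter family of curve sections, apply the $k=1$ estimate fibrewise, and sum.

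\textbf{Base case $k=1$.} Here $X$ is an absolutely irreducible affine curve of degree $\delta$. Let $\widetilde X$ be the smooth projective model of its projective closure; by the genus--degree inequality its genus satisfies $g \le \binom{\delta-1}{2}$. Weil's theorem gives $\big||\widetilde X(\fq)| - (q+1)\big| \le 2g\, q^{1/2} \le (\delta-1)(\delta-2) q^{1/2}$. I would then pass from $\widetilde X$ back to $X$ by discarding the points at infinity and correcting at the singular points of the closure: the number of such points, and of points of $\widetilde X$ lying over them, is $O(\delta^2)$ by B\'ezout. Collecting these corrections (which are bounded uniformly in $q$, hence absorbed into the term $5\delta^{13/3}q^{k-1}=5\delta^{13/3}$ when $k=1$, since $13/3>2$) yields the estimate in the stated form.

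\textbf{Reduction to curves.} For $k \ge 2$, fix $k-1$ generic $\fq$-linear forms $\ell_1,\dots,\ell_{k-1}$ and, for $a=(a_1,\dots,a_{k-1}) \in \fq^{k-1}$, set $C_a = X \cap \{\ell_1 = a_1,\dots,\ell_{k-1}=a_{k-1}\}$. These slices partition the rational points, so $|X(\fq)| = \sum_{a \in \fq^{k-1}} |C_a(\fq)|$. The key geometric input is an effective Bertini irreducibility theorem: for a generic choice of the $\ell_i$, the locus of $a$ for which $C_a$ fails to be an absolutely irreducible curve of degree $\le \delta$ is contained in a proper subvariety $B \subseteq \A^{k-1}$ of explicitly bounded degree. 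For $a \notin B$ I apply the base case to $C_a$, obtaining $|C_a(\fq)| = q + (\delta-1)(\delta-2)q^{1/2} + O(\delta^{13/3})$; summing over the $\le q^{k-1}$ such $a$ produces the main term $q^k$ and an error $(\delta-1)(\delta-2)q^{k-1/2}$, the Weil constant surviving intact because both the main term and the fibrewise error are multiplied by the same count $\approx q^{k-1}$ of good fibres.

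\textbf{Controlling the bad fibres.} It remains to bound $\sum_{a \in B}|C_a(\fq)|$. Each such slice has degree $\le \delta$, so by the Lachaud--Rolland bound (Proposition~\ref{Lachaud bound}) it has at most $\delta\, \pi_{\dim C_a}$ rational points; combined with the estimate $|B(\fq)| = O(\deg(B)\, q^{k-2})$ this contributes $O(\deg(B)\,\delta\, q^{k-1})$, which is the source of the $5\delta^{13/3}q^{k-1}$ term. The hypothesis $q > 2(k+1)\delta^2$ guarantees that the number of good fibres really is close to $q^{k-1}$ and that the main term dominates. The main obstacle is exactly this effective Bertini step: proving that absolute irreducibility and the degree bound are preserved by generic linear sections, \emph{with} an explicit and small bound on $\deg(B)$, is the technical heart of the argument, and it is what pins down the specific exponent $13/3$ and the constant $5$. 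A secondary difficulty is bookkeeping the curve-case corrections so that the leading Weil constant comes out as $(\delta-1)(\delta-2)$ rather than a larger multiple.
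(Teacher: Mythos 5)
First, a point of comparison: the paper does not prove this statement at all. It is quoted verbatim as Theorem 7.1 of Cafure and Matera and used as a black box (via Corollary~\ref{Cor bound}), so there is no internal proof to measure your attempt against; your proposal has to be judged as a reconstruction of the Cafure--Matera argument itself.

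As such a reconstruction, your outline has the right overall shape --- Weil for absolutely irreducible plane curves, then a slicing argument whose error is governed by an effective Bertini theorem --- but there is a genuine gap in two places. First, the effective Bertini statement you invoke (``the locus of $a$ for which $C_a$ fails to be absolutely irreducible of degree $\le \delta$ lies in a subvariety $B$ of explicitly bounded degree'') is precisely the technical content of the theorem: the exponent $13/3$ and the constant $5$ are extracted from an explicit bound on $\deg B$, and you give no argument for it. Asserting that lemma and then reading off the constants is circular as a proof of this particular numerical statement. Second, for $X$ of codimension at least $2$, fibring $X$ directly into curves by parallel translates of a codimension-$(k-1)$ linear space is not how Cafure and Matera proceed, and for good reason: their effective Bertini theorem is established for hypersurfaces (pencils of plane sections reducing to plane curves), and the general case is then handled by a generic linear projection $X \to \A^{k+1}$ that is birational onto a hypersurface $H$ of the same degree, with $\bigl||X(\fq)|-|H(\fq)|\bigr|$ controlled by the degree of the exceptional locus. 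Your route would require an effective Bertini theorem in arbitrary codimension, which you would still have to prove, and which is harder than the hypersurface statement. The base case and the bookkeeping over good fibres are fine in outline (the per-fibre corrections of size $O(\delta^2)\le \delta^{13/3}$ are indeed absorbable, and bounding the bad fibres via Proposition~\ref{Lachaud bound} is reasonable), but as written the proposal names the hard step rather than carrying it out.
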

Note that the main term is $q^k$, regardless of the degree of the affine variety.
So, if $X$ consisted of $c$ irreducible components, each of degree $1$, then the main term can still be $c q^k$. However, if it has components of degree $\delta>1$, then this bound on those components is much tighter than $\delta \pi_k$.

We prove a projective version of this result by applying a counting argument.

\begin{corollary}\label{Cor bound}
Let $X$ be a variety defined over $\fq$ of dimension $k>0$ and degree $\delta\geq 2$. If $q > 2(k + 1)\delta^2$, then the following estimate holds:
$$||X(\fq)| - q^k| < 3.2 \delta^{13/3} q^{k- 1/2}.$$
\end{corollary}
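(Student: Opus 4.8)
The plan is to deduce the projective estimate directly from the affine Theorem~\ref{thm: affine variety bound} by a cone/slicing argument. Given a projective variety $X\subseteq\pmfqb$ of dimension $k>0$ and degree $\delta\geq 2$ defined over $\fq$, I would relate its $\fq$-points to those of an affine variety of the same dimension and degree. The standard device is to choose an $\fq$-rational hyperplane $\HH=V(\ell)$ and split $X(\fq)$ as the disjoint union of $(X\setminus\HH)(\fq)$ and $(X\cap\HH)(\fq)$. The complement $X\setminus\HH$ is (a dense open subset of) an affine variety of dimension $k$ and degree $\delta$, to which Theorem~\ref{thm: affine variety bound} applies, giving $||{(X\setminus\HH)(\fq)}|-q^k|\leq(\delta-1)(\delta-2)q^{k-1/2}+5\delta^{13/3}q^{k-1}$. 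The slice $X\cap\HH$ has dimension $\leq k-1$ and degree $\leq\delta$, so by Proposition~\ref{Lachaud bound} its point count is $O(\delta\,\pi_{k-1})=O(\delta\,q^{k-1})$.

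First I would make the affine reduction rigorous: one must be able to choose the $\fq$-rational hyperplane $\HH$ so that $X\not\subseteq\HH$ (so that $X\setminus\HH$ is a genuine affine variety of full dimension $k$, with the same degree $\delta$, defined over $\fq$). Since $X$ is a single variety of positive dimension and $\fq$ has $q\geq 2(k+1)\delta^2\geq 9$ elements, there are more than enough $\fq$-rational hyperplanes to find one not containing $X$; after a linear change of coordinates we may take $\HH=V(x_0)$. Then $X\setminus\HH$ sits inside $\A^m$ as an affine variety meeting the hypotheses of Theorem~\ref{thm: affine variety bound}. Adding the two contributions gives
\[
\bigl||X(\fq)|-q^k\bigr|\leq (\delta-1)(\delta-2)q^{k-1/2}+5\delta^{13/3}q^{k-1}+\delta\,\pi_{k-1}.
\]

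The remaining work is purely arithmetic bookkeeping: I would bound each lower-order term by a multiple of $\delta^{13/3}q^{k-1/2}$ and collect constants. Using $q>2(k+1)\delta^2\geq 6\delta^2$ one has $q^{k-1}\leq q^{k-1/2}/\sqrt{6}\,\delta^{-1}$, so $5\delta^{13/3}q^{k-1}$ and the $\delta\,\pi_{k-1}\leq\delta\cdot 2q^{k-1}$ slice term are each comfortably smaller than a fixed fraction of $\delta^{13/3}q^{k-1/2}$; likewise $(\delta-1)(\delta-2)\leq\delta^2\leq\delta^{13/3}$ controls the leading error term. Summing the resulting coefficients and checking that the total stays below $3.2$ (this is where the precise constant $3.2$, the hypothesis $\delta\geq 2$, and the quantitative lower bound on $q$ are all used) yields the claimed strict inequality $||X(\fq)|-q^k|<3.2\,\delta^{13/3}q^{k-1/2}$.

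I expect the main obstacle to be the last step rather than the geometry: squeezing all three error contributions into the single clean coefficient $3.2$ requires the lower bound $q>2(k+1)\delta^2$ to be used sharply, and one must verify the worst case is $\delta=2$ (where $\delta^{13/3}$ is smallest relative to the constant terms). A secondary subtlety is confirming that $X\setminus\HH$ has the same degree $\delta$ as $X$ and is itself irreducible over $\fqb$ (so that Theorem~\ref{thm: affine variety bound}, stated for affine varieties, genuinely applies); this follows because removing a hyperplane section from an irreducible projective variety leaves an irreducible affine variety of the same dimension and degree, but it is the point that must be stated carefully.
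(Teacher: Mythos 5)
Your argument is correct, but it takes a genuinely different route from the paper's. You fix a single $\fq$-rational hyperplane $\HH$ with $X\not\subseteq\HH$, apply the Cafure--Matera affine bound to the affine variety $X\setminus\HH$, and control the slice $X\cap\HH$ via the Lachaud--Rolland bound $\delta\pi_{k-1}$. The paper instead double-counts the pairs $(P,\HH)$ with $P\in X(\fq)$ and $P\notin\HH$, obtaining $q^m|X(\fq)|=\sum_{\HH}|(X\setminus\HH)(\fq)|$, and then averages the affine estimate over all $\pi_m$ hyperplanes (after first reducing to the case where no hyperplane contains $X$). The averaging makes the hyperplane section disappear entirely, at the cost only of the harmless factor $\pi_m/q^m\leq 17/16$; your decomposition must carry the extra slice term, but it is of lower order and your bookkeeping closes with room to spare: $(\delta-1)(\delta-2)\leq\delta^2\leq 2^{-7/3}\delta^{13/3}$, $5\delta^{13/3}q^{-1/2}\leq\tfrac{5}{2}\delta^{10/3}\leq\tfrac{5}{4}\delta^{13/3}$, and $\delta\pi_{k-1}$ contributes less than $0.03\,\delta^{13/3}q^{k-1/2}$, for a total well under $3.2\,\delta^{13/3}q^{k-1/2}$. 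Two small points to tighten in your write-up: for $k=1$ the hypothesis only gives $q>2(k+1)\delta^2\geq 4\delta^2$, not $6\delta^2$, so the inequality you should use is $\sqrt{q}>2\delta$ (the estimates still go through); and the existence of an $\fq$-rational hyperplane not containing $X$ should be justified by noting that, since $\dim X\geq 1$, the hyperplanes containing $X$ form a proper linear subspace of the dual projective space, hence at most $\pi_{m-2}<\pi_m$ of the $\fq$-rational hyperplanes contain $X$.
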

\begin{proof}
Suppose $X\subseteq \pmfqb$. We start by assuming that there is no non-zero $h\in S_1(d)$ for which $X\subseteq V(h)$,
because otherwise we can replace $\pmfqb$ by $\PPP^{m-1}(\fqb)$.

Let $S$ be the set of all pairs $(P,\HH)$, where $P\in X(\fq)$ and $\HH$ is a hyperplane of $\pmfq$ that does not contain $P$. We compute the size of $S$ in two ways. We have $|X(\fq)|$ choices for $P$ and once we choose $P$, we have $\pi_m-\pi_{m-1}=q^m$ choices for $\HH$. Therefore, $|S|=q^m |X(\fq)|$.
On the other hand, we also have
$|S|=\sum_{\HH}|(X\setminus \HH) (\fq)|$. There are $\pi_m$ hyperplanes. For each hyperplane $\HH$, we have
\[||(X\setminus \HH) (\fq)| -q^k |
\leq (\delta - 1)(\delta - 2)q^{k- 1/2} + 5\delta^{13/3} q^{k-1}
\leq \delta^2q^{k- 1/2} +5 \frac{\delta^{13/3}}{\sqrt{q}} q^{k-1/2}.\]
Since $q>2(k+1)\delta^2$ and $k\geq 1$, we have $\sqrt{q}\geq 2\delta$. In addition, $\delta\geq 2$ implies $\delta^2<\frac{1}{2}\delta^{10/3}$.
This implies that
\[||(X\setminus \HH) (\fq)| -q^k |<\Big(\frac{1}{2}\delta^{10/3}+\frac{5}{2}\delta^{10/3} \Big) q^{k-1/2} =3\delta^{10/3} q^{k-1/2}.\]
Therefore,
$$||S| - q^k\pi_m|\leq 3\delta^{10/3} q^{k-1/2}\pi_m.$$
We conclude that
\[\Big||X(\fq)|- q^k \frac{\pi_m}{q^m}\Big| =\frac{||S| - q^k\pi_m|}{q^m}
\leq  3\delta^{10/3} q^{k-1/2}\frac{\pi_m}{q^m}.\]
Now
\[\frac{\pi_m}{q^m}<\frac{q^{m+1}}{(q-1)q^m}=1+\frac{1}{q-1}\leq 1+\frac{1}{2(k+1)\delta^2}\leq 1+\frac{1}{16}.\]
The result follows as $3\times 17/16<3.2$.
\end{proof}

Note that Theorem~\ref{thm: affine variety bound} and Corollary~\ref{Cor bound} apply to algebraic sets that are irreducible over $\fq$. Cafure and Matera also have a result for algebraic sets that are irreducible over $\fq$ but not over $\fqb$.
This time, there is no $q^k$ term.

\begin{lemma}\cite[Lemma 2.3]{cafure2006improved}\\
Let $X$ be an affine algebraic set which is defined over $\fq$ and irreducible over $\fq$ but not irreducible over $\fqb$. Denote $\dim(X)=k$ and $\deg(X)=\delta$. Then the following inequality holds
$$|X(\fq)| \leq \frac{\delta^2}{4} q^{k-1}.$$
\end{lemma}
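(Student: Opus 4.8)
The plan is to argue by Galois descent relative to the extension $\fqb/\fq$. First I would decompose $X$ into its irreducible components over $\fqb$, say $X = X_1 \cup \dots \cup X_s$; since $X$ is by hypothesis not irreducible over $\fqb$, we have $s \ge 2$. Because $X$ is defined over $\fq$, the $q$-power Frobenius $\phi \in \mathrm{Gal}(\fqb/\fq)$ (acting on the coordinates) carries $X$ to itself and hence permutes the components $X_1,\dots,X_s$; because $X$ is irreducible over $\fq$, this induced permutation is transitive. In particular the $X_i$ form a single $\phi$-orbit, so they share a common dimension (necessarily $k=\dim X$) and a common degree $\delta_0$. Since the degree of a reduced equidimensional set is the sum of the degrees of its components, $\delta = \deg(X) = \sum_{i=1}^s \deg(X_i) = s\,\delta_0$ with $s \ge 2$.

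The crucial observation is that every $\fq$-rational point of $X$ lies on \emph{all} of the components at once. Indeed, fix $P \in X(\fq)$, so that $\phi(P)=P$. For each index $i$ we then have $P \in X_i \iff \phi(P)=P \in \phi(X_i)$, which shows that the set $\{\, i : P \in X_i \,\}$ is invariant under the permutation induced by $\phi$. This set is nonempty (as $P\in X$), and $\phi$ acts transitively, so it must equal $\{1,\dots,s\}$. In particular $P \in X_1 \cap X_2$, and therefore $X(\fq) \subseteq X_1 \cap X_2$.

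It then remains to bound $|(X_1\cap X_2)(\fq)|$. Since $X_1$ and $X_2$ are distinct irreducible components of the same dimension $k$, neither contains the other, so $\dim(X_1\cap X_2)\le k-1$; and by the refined B\'ezout inequality $\deg(X_1\cap X_2)\le \deg(X_1)\deg(X_2)=\delta_0^2$. Invoking the standard elementary estimate $|Y(\fq)|\le \deg(Y)\,q^{\dim Y}$ for an affine $\fq$-algebraic set $Y$, I would conclude
$$|X(\fq)| \le |(X_1\cap X_2)(\fq)| \le \deg(X_1\cap X_2)\,q^{\,k-1} \le \delta_0^2\, q^{\,k-1}.$$
Finally, since $\delta_0 = \delta/s$ and $s\ge 2$, we have $\delta_0^2 = \delta^2/s^2 \le \delta^2/4$, giving $|X(\fq)| \le \tfrac{\delta^2}{4}\,q^{k-1}$.

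I expect the main obstacle to be purely expository: phrasing the Galois-theoretic dictionary precisely, namely that "$X$ defined over $\fq$ and irreducible over $\fq$ but not over $\fqb$" translates exactly into "$\phi$ permutes the $\fqb$-irreducible components transitively with at least two of them," and citing the correct forms of the two standard inputs (the degree bound on an intersection and the count $|Y(\fq)|\le \deg(Y)q^{\dim Y}$). Once these facts are in place the geometry is elementary, and the factor $1/4$ emerges simply from $s\ge 2$.
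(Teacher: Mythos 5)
The paper does not prove this lemma at all: it is imported verbatim as a citation to Cafure--Matera \cite[Lemma 2.3]{cafure2006improved}, so there is no in-paper argument to compare against. Your Galois-descent proof is correct and is essentially the argument of the cited source: Frobenius permutes the $\fqb$-components transitively, every $\fq$-point is fixed by Frobenius and hence lies on all $s\geq 2$ components, so $X(\fq)\subseteq X_1\cap X_2$, and the refined B\'ezout bound together with $|Y(\fq)|\leq \deg(Y)q^{\dim Y}$ and $\delta_0=\delta/s\leq \delta/2$ gives the factor $\delta^2/4$.
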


We obtain a projective version of this result by a similar counting argument.
\begin{corollary}\label{cor: bound for non-irreducible}
Let $X$ be an algebraic set which is defined over $\fq$ and irreducible over $\fq$ but not irreducible over $\fqb$. Denote $\dim(X)=k$ and $\deg(X)=\delta$. Then the following inequality holds
$$|X(\fq)| < \frac{\delta^2}{2} q^{k-1}.$$
\end{corollary}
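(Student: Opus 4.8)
The plan is to mirror the double-counting argument from the proof of Corollary~\ref{Cor bound}, but to feed it the affine bound $\frac{\delta^2}{4}q^{k-1}$ from the immediately preceding lemma of Cafure and Matera in place of the affine variety estimate. First I would reduce to the case where $X$ is nondegenerate, i.e. where there is no nonzero $h\in S_1(m)$ with $X\subseteq V(h)$: if $X$ is contained in a hyperplane, then (as $X$ is defined over $\fq$) its $\fq$-rational linear span is a proper projective subspace defined over $\fq$, and since the target bound depends only on $k$ and $\delta$ and not on $m$, I can replace $\pmfqb$ by that smaller projective space and assume nondegeneracy.

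Next I would set up the same incidence set $S$ of pairs $(P,\HH)$ with $P\in X(\fq)$ and $\HH$ an $\fq$-rational hyperplane avoiding $P$, and count it two ways. Counting over $P$ first gives $|S| = q^m|X(\fq)|$, since each point lies off exactly $\pi_m-\pi_{m-1}=q^m$ hyperplanes. Counting over $\HH$ first gives $|S| = \sum_{\HH}|(X\setminus\HH)(\fq)|$, a sum over the $\pi_m$ hyperplanes of $\pmfq$.

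The step I expect to require the most care is verifying that, for each $\fq$-rational hyperplane $\HH$, the affine set $X\setminus\HH$ still satisfies the hypotheses of the affine lemma: defined over $\fq$, irreducible over $\fq$, \emph{not} irreducible over $\fqb$, of dimension $k$ and degree $\delta$. Irreducibility over $\fq$ and the dimension/degree claims follow once one knows that $X\setminus\HH$ is a nonempty open subscheme of $X$, so the crux is that no geometric component of $X$ lies in $\HH$. This uses that $\text{Gal}(\fqb/\fq)$ permutes the geometric components of $X$ transitively, because $X$ is irreducible over $\fq$; since $\HH$ is Galois-stable, containing one component would force it to contain all of them, hence $X\subseteq\HH$, contradicting nondegeneracy. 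The same transitivity, together with the fact that $X$ has $t>1$ geometric components, shows $X\setminus\HH$ remains geometrically reducible. Thus the affine lemma applies and gives $|(X\setminus\HH)(\fq)|\leq \frac{\delta^2}{4}q^{k-1}$ for every $\HH$.

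Finally I would combine the two counts to get $q^m|X(\fq)| \leq \pi_m\cdot\frac{\delta^2}{4}q^{k-1}$, so that $|X(\fq)|\leq \frac{\pi_m}{q^m}\cdot\frac{\delta^2}{4}q^{k-1}$, and then bound $\frac{\pi_m}{q^m}=\sum_{j=0}^m q^{-j} < \frac{q}{q-1}\leq 2$, which holds strictly for every $q\geq 2$. This upgrades the constant from $\frac14$ to strictly less than $\frac12$, yielding the claimed $|X(\fq)| < \frac{\delta^2}{2}q^{k-1}$. I note that, unlike Corollary~\ref{Cor bound}, no lower bound on $q$ is needed here, since the affine lemma of Cafure and Matera carries no hypothesis on $q$.
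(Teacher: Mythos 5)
Your proposal is correct and follows essentially the same double-counting argument as the paper's proof: the same incidence set $S$, the same application of the Cafure--Matera affine lemma to each $X\setminus\HH$, and the same final bound $\pi_m/q^m<2$. The extra care you take (reducing to the nondegenerate case and using Galois transitivity of the geometric components to check the lemma's hypotheses for each hyperplane) merely fills in details the paper leaves implicit.
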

\begin{proof}
Suppose $X\subseteq \pmfqb$. Let $S$ be the set of all pairs $(P,\HH)$, where $P\in X(\fq)$ and $\HH$ is a hyperplane of $\pmfq$ that does not contain $P$. We compute the size of $S$ in two ways. We have $|X(\fq)|$ choices for $P$ and once we choose $P$, we have $\pi_m-\pi_{m-1}=q^m$ choices for $\HH$. Therefore, $|S|=q^m |X(\fq)|$.
On the other hand, we also have
$$|S|=\sum_{\HH}|(X\setminus \HH) (\fq)| \leq \sum_{\HH} \frac{\delta^2}{4}q^{k-1} =\pi_m \frac{\delta^2}{4}q^{k-1}.$$
It follows that
\[|X(\fq)|\leq \frac{\pi_m}{q^m}\frac{\delta^2}{4}q^{k-1} <2\frac{\delta^2}{4}q^{k-1}. \qedhere\]
\end{proof}

Furthermore, we can estimate the dominant term of the formula in Theorem \ref{Thm: complete GDC conjecture}.
\begin{lemma}\label{Lem starts with c q m-l}
Suppose we have $1\leq l\leq m$, $1\leq c\leq d$ and
\begin{align*}
\tbinom{m+d}{d}&-\tbinom{m+d+1-l}{d} <r 
\leq \tbinom{m+d}{d}-\tbinom{m+d+1-l}{d}+\tbinom{m+d+1-l-c}{d-c}.
\end{align*}
Then we have
$$c q^{m-l}< H_{r-\binom{m+d}{d}+\binom{m+d+1-l}{d}}(d-1,m-l+1)+\pi_{m-l}.$$
\end{lemma}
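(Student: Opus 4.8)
The plan is to reduce the inequality to a statement about a single integer, namely the first coordinate $\beta_1$ of the tuple $\omega_j(d-1,m-l+1)$, where I set $j = r - \binom{m+d}{d} + \binom{m+d+1-l}{d}$. By the hypothesis on $r$ this index satisfies $1 \leq j \leq \binom{m+d+1-l-c}{d-c}$. Writing out the two quantities as polynomials in $q$, I have $H_j(d-1,m-l+1) = \sum_{i=1}^{m-l+1}\beta_i q^{(m-l+1)-i}$ and $\pi_{m-l} = \sum_{i=0}^{m-l} q^i$, so the right-hand side has leading term $(\beta_1+1)q^{m-l}$ with all remaining coefficients nonnegative. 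Thus it suffices to show $\beta_1 \geq c-1$, together with an argument that at least one lower-order coefficient is strictly positive.

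The heart of the matter is to pin down $\beta_1$ from the constraint on $j$. I would count how many tuples of $\Omega(d-1,m-l+1)$ have first coordinate at least a given value $t$: fixing $\gamma_1 = s$ leaves $m-l+1$ coordinates summing to $d-1-s$, giving $\binom{d-1-s+m-l}{m-l}$ tuples by stars and bars, and summing over $s \geq t$ via the hockey-stick identity yields exactly $\binom{m+d-l-t}{m-l+1}$ tuples with $\gamma_1 \geq t$. Since lexicographic order ranks tuples with larger first coordinate first, $\omega_j$ has $\beta_1 \geq t$ precisely when $j$ does not exceed the number of tuples with $\gamma_1 \geq t$.

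Now I rewrite the upper bound using the symmetry of binomial coefficients as $\binom{m+d+1-l-c}{d-c} = \binom{m+d-l-(c-1)}{m-l+1}$, which by the count above is exactly the number of tuples with $\gamma_1 \geq c-1$. Hence $j \leq \binom{m+d+1-l-c}{d-c}$ forces $\beta_1 \geq c-1$, so the leading coefficient $\beta_1 + 1 \geq c$. If $\beta_1 \geq c$ the inequality is immediate, since the right-hand side is then at least $(c+1)q^{m-l} > c q^{m-l}$. In the boundary case $\beta_1 = c-1$ the leading term is exactly $c q^{m-l}$, and I conclude using the strictly positive lower-order contribution of $\pi_{m-l}$, which supplies the constant term $1$ (and indeed every term $q^i$ for $0 \leq i \leq m-l-1$) whenever $m-l \geq 1$.

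The main obstacle I expect is the combinatorial bookkeeping in the middle step: correctly counting lattice points of $\Omega(d-1,m-l+1)$ stratified by first coordinate, applying the hockey-stick identity, and matching the resulting binomial $\binom{m+d-l-(c-1)}{m-l+1}$ against the rewritten upper bound $\binom{m+d+1-l-c}{d-c}$. Everything else is routine once $\beta_1 \geq c-1$ is established, the final strictness coming entirely from the presence of the lower-order terms of $\pi_{m-l}$.
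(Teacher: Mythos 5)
Your argument is essentially the paper's: both proofs reduce to showing that the leading coefficient $\beta_1$ of $H_j(d-1,m-l+1)$ is at least $c-1$ and then add $\pi_{m-l}$. Your derivation of $\beta_1\geq c-1$ by counting the tuples of $\Omega(d-1,m-l+1)$ with first coordinate at least $c-1$ is correct, and your identity $\binom{m+d-l-(c-1)}{m-l+1}=\binom{m+d+1-l-c}{d-c}$ checks out; the paper instead invokes Lemma~\ref{Lem: Special value of Hrdm}, which gives $H_{\binom{m-l+1+d-c}{d-c}}(d-1,m-l+1)=(c-1)q^{m-l}$, together with the monotonicity of $H_r$ in $r$ --- the same computation in different clothing. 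The one caveat, which you half-acknowledge with your parenthetical ``whenever $m-l\geq 1$'', is the boundary case $l=m$ with $\beta_1=c-1$: this genuinely occurs (take $j=\binom{d+1-c}{d-c}=d+1-c$), and there the right-hand side equals $(c-1)+\pi_0=c=cq^{0}$, so the stated strict inequality degenerates to an equality. The paper's own proof has exactly the same blind spot, since its step ``$q^{m-l}<\pi_{m-l}$'' is an equality when $l=m$; only the non-strict inequality holds in that case, which is still enough for the application in Proposition~\ref{X does not have linear subspace} because the bound $|X(\fq)|<cq^{m-l}$ obtained there is already strict.
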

\begin{proof}
See Appendix~\ref{Sec: Appendix}.
\end{proof}

Proposition~\ref{dimension} and Proposition \ref{degree} bound the degree and dimension of $X$. Together with Corollary~\ref{Cor bound} and Corollary~\ref{cor: bound for non-irreducible}, this gives a tight bound on the number of $\fq$-rational points in the highest-dimensional components of $X$. Under our assumption that $X$ does not have a linear $(m-l)$-dimensional subspace, this bound is of order $\frac{c}{2}q^{m-l}$.
Moreover, Proposition~\ref{Prop: Components of codim geq l} gives an upper bound for the number of $\fq$-rational points coming from lower-dimensional components.
Combining this with the upper bound of $\fq$-rational points on $(m-l)$-dimensional components, we show that for sufficiently large $q$, the number of $\fq$-rational points in $X$ is smaller than $c q^{m-l}$. Then, we are done by Lemma~\ref{Lem starts with c q m-l}.

\begin{proposition}\label{X does not have linear subspace}
Suppose we have $1\leq l\leq m$, $1\leq c\leq d$ and
\begin{align*}
\tbinom{m+d}{d}&-\tbinom{m+d+1-l}{d}+\tbinom{m+d-l-c}{d-c-1} <r 
\leq \tbinom{m+d}{d}-\tbinom{m+d+1-l}{d}+\tbinom{m+d+1-l-c}{d-c}.
\end{align*}
Suppose $F_1,\dots,F_r$ are linearly independent polynomials in $S_d(m)$ and $X=V(F_1,\dots,F_r)$ does not contain a $(m-l)$-dimensional linear subspace defined over $\fq$. If
$$q\geq \max\Big\{2(m-l+1)c^2+1,\;8\frac{d^{l+1}}{c},\; 164 c^{14/3}\Big\},$$
then we have
$$|X(\fq)|< H_{r-\binom{m+d}{d}+\binom{m+d+1-l}{d}}(d-1,m-l+1)+\pi_{m-l}.$$    
\end{proposition}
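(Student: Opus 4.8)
The plan is to combine the dimension and degree bounds from Section~\ref{sec: two} with the point-counting estimates of Cafure and Matera, splitting $X=V(F_1,\dots,F_r)$ according to the dimension of its components and treating the top-dimensional part with particular care.

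First I would record the structural facts. By Proposition~\ref{dimension} we have $\dim(X)\le m-l$, and by Proposition~\ref{degree} we have $\deg_{m-l}(X)\le c$. Write $X'$ for the union of the $(m-l)$-dimensional irreducible components of $X$ and $Y$ for the union of the components of dimension at most $m-l-1$, so that $|X(\fq)|\le |X'(\fq)|+|Y(\fq)|$. The lower-dimensional part is immediately controlled by Proposition~\ref{Prop: Components of codim geq l} with $k=m-l-1$, giving $|Y(\fq)|\le d^{l+1}\pi_{m-l-1}$; using $\pi_{m-l-1}<q^{m-l}/(q-1)$ together with the hypothesis $q\ge 8d^{l+1}/c$ turns this into a bound of the shape $|Y(\fq)|< \tfrac{c}{4}q^{m-l}$, which is a small fraction of the target $cq^{m-l}$.

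The heart of the argument is $X'$. Since $X$ is defined over $\fq$, its $\fqb$-irreducible $(m-l)$-dimensional components are permuted by $\mathrm{Gal}(\fqb/\fq)$, and grouping them into Galois orbits expresses $X'$ as a union of $\fq$-irreducible components $Z_1,\dots,Z_s$ of degrees $\delta_1,\dots,\delta_s$ with $\sum_i\delta_i=\deg_{m-l}(X)\le c$. The hypothesis that $X$ contains no $(m-l)$-dimensional linear subspace defined over $\fq$ is exactly what rules out the dangerous components: no $Z_i$ can be a linear space over $\fq$, so each $Z_i$ is either (i) irreducible over $\fqb$ of degree $\delta_i\ge 2$, or (ii) irreducible over $\fq$ but reducible over $\fqb$, in which case $\delta_i\ge 2$ as well. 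To case (i) I would apply Corollary~\ref{Cor bound} (its hypothesis $q>2(m-l+1)\delta_i^2$ is guaranteed by $q\ge 2(m-l+1)c^2+1$), and to case (ii) Corollary~\ref{cor: bound for non-irreducible}, which contributes only $O(q^{m-l-1})$ since the $\fq$-points of a conjugate union lie on a lower-dimensional intersection.

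Assembling the estimate is then bookkeeping against the budget $cq^{m-l}$. The decisive observation is that only the case-(i) components contribute a main term $q^{m-l}$, each such component has degree at least $2$, and their degrees sum to at most $c$; hence there are at most $\lfloor c/2\rfloor$ of them and the combined main term is at most $\tfrac{c}{2}q^{m-l}$. This is the source of the factor $\tfrac{c}{2}$ advertised in the text, and it leaves a full $\tfrac{c}{2}q^{m-l}$ of room for all error terms. The type-(ii) contributions, bounded by $\sum_i \tfrac{\delta_i^2}{2}q^{m-l-1}\le \tfrac{c^2}{2}q^{m-l-1}$, are negligible once $q\ge 164c^{14/3}\ge 164c^2$. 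One then adds the errors from the case-(i) components and the lower-dimensional bound, checks that their sum stays below $\tfrac{c}{2}q^{m-l}$, and concludes $|X(\fq)|<cq^{m-l}$. Lemma~\ref{Lem starts with c q m-l} finally gives $cq^{m-l}<H_{r-\binom{m+d}{d}+\binom{m+d+1-l}{d}}(d-1,m-l+1)+\pi_{m-l}$, completing the proof.

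I expect the quantitative error control for the case-(i) components to be the main obstacle, and in particular the extremal case of a single irreducible component of degree $\delta=c$. There the main term is only $q^{m-l}$ yet must beat $cq^{m-l}$ with margin, and the crude single-term estimate $3.2\delta^{13/3}q^{m-l-1/2}$ of Corollary~\ref{Cor bound} is too lossy on its own: used alone it would force $q\gtrsim c^{20/3}$. The way through is to retain the full two-term Cafure--Matera estimate from Theorem~\ref{thm: affine variety bound}, whose error contributions scale like $\delta^2q^{m-l-1/2}$ and $\delta^{13/3}q^{m-l-1}$; the exponent $14/3$ in the hypothesis on $q$ is precisely the value that makes both of these scale like $\delta^{-1/3}q^{m-l}$, so that they are dwarfed by the gap $(\delta-1)q^{m-l}$. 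Verifying that this balance holds uniformly over all admissible degree partitions $\delta_1,\dots,\delta_s$ is the technical crux.
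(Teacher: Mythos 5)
Your proposal is correct and follows essentially the same route as the paper: bound the dimension and $\deg_{m-l}$ via Propositions~\ref{dimension} and~\ref{degree}, control the lower-dimensional part with Proposition~\ref{Prop: Components of codim geq l}, split the $(m-l)$-dimensional $\fq$-irreducible components into absolutely irreducible ones (forced to have degree $\geq 2$ by the no-linear-subspace hypothesis and handled by Corollary~\ref{Cor bound}) and non-absolutely-irreducible ones (Corollary~\ref{cor: bound for non-irreducible}), then conclude $|X(\fq)|<cq^{m-l}$ and finish with Lemma~\ref{Lem starts with c q m-l}. The obstacle you flag at the end dissolves in the paper's treatment because the proof of Corollary~\ref{Cor bound} actually establishes the error term $3.2\delta^{10/3}q^{k-1/2}$ (the exponent $13/3$ in its statement appears to be a typo), which is precisely what makes the threshold $164c^{14/3}$ suffice --- numerically equivalent to the two-term balancing you describe.
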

\begin{proof}
We know from Proposition \ref{dimension} and Proposition~\ref{degree} that $\dim(X)\leq m-l$ and $\deg_{m-l}(X)\leq c$.
Let $X_1$ be the union of the irreducible components of $X$ of dimension $m-l$ and $X_2$ be the union of the irreducible components of dimension at most $m-l-1$.
By Proposition \ref{Prop: Components of codim geq l}, we know that
$$|X_2(\fq)|\leq d^{l+1}\pi_{m-l-1}< d^{l+1}q^{m-l-1}\frac{q}{q-1}\leq 2 d^{l+1}q^{m-l-1}.$$
Write $X_1$ as the union components irreducible over $\fq$. Suppose $Y_1,\dots,Y_{s}$ are the components that are also irreducible over $\fqb$ and $Z_1,\dots,Z_t$ are the components that are not irreducible over $\fqb$ (but are irreducible over $\fq$). Denote $\deg(Y_i)=d_i$ and $\deg(Z_i)=e_i$. Since $X$ does not contain a $m-l$ dimensional linear subspace defined over $\fq$, we know that all $d_i\geq 2$. 
Since $q>2(m-l+1)c^2$, by Corollary \ref{Cor bound} we know that
\begin{align*}
|Y_i(\fq)| &< q^{m-l}+ 3.2 d_i^{10/3} q^{m-l-1/2}
\leq \frac{d_i}{2} q^{m-l}+ 3.2 d_i^{10/3} q^{m-l- 1/2}  .
\end{align*}
Next, we know that
$$|Z_i(\fq)|\leq \frac{e_i^2}{2} q^{m-l-1}<\frac{e_i}{2} q^{m-l}.$$
Therefore, it follows that
\begin{align*}
|X_1(\fq)|
&< \Big(\sum d_i/2 +\sum e_i/2\Big) q^{m-l} + 3.2 \Big(\sum d_i\Big)^{10/3} q^{m-l-1/2}\\
&\leq \frac{c}{2} q^{m-l} +3.2 c^{10/3} q^{m-l-1/2}.
\end{align*}
We see that
\begin{align*}
|X(\fq)|
&< \frac{c}{2} q^{m-l} +3.2 c^{10/3} q^{m-l-1/2} +2d^{l+1} q^{m-l-1}.
\end{align*}
Since $q\geq 164 c^{14/3}$, we have $3.2 c^{10/3} q^{m-l-1/2} <\frac{c}{4} q^{m-l}$. Moreover, since $q\geq 8 \frac{d^{l+1}}{c}$, we have $2d^{l+1} q^{m-l-1}\leq \frac{c}{4} q^{m-l}$. We conclude that $|X(\fq)|< c q^{m-l}$.
We are done by Lemma~\ref{Lem starts with c q m-l}.
\end{proof}

\section{If $X$ contains a linear subspace}\label{sec: if X contains a linear subspace}
In this section, we focus on proving the case of Theorem~\ref{Thm: complete GDC conjecture} when $X$ contains a linear subspace of dimension $(m-l)$ defined over $\fq$. In this case, equality can actually hold, meaning the number of $\fq$-rational points in $X$ could be equal to the conjectured formula.
Therefore, we cannot rely on the bounds provided by Corollary~\ref{Cor bound}, Corollary~\ref{cor: bound for non-irreducible} and Proposition~\ref{Prop: Components of codim geq l}.
However, we leverage the existence of a $(m-l)$-dimensional linear subspace, to divide $X$ into parts whose number of $\fq$-rational points can be bounded with the help of Theorem~\ref{Affine result}.

\begin{lemma}\label{Lem: points less than sum of Hrk}
Suppose we have $1\leq l\leq m$ and
\begin{align*}
\tbinom{m+d}{d}-\tbinom{m+d+1-l}{d} <r \leq \tbinom{m+d}{d}-\tbinom{m+d-l}{d}.
\end{align*}
Suppose $F_1,\dots,F_r$ are linearly independent polynomials in $S_d(m)$ and $X=V(F_1,\dots,F_r)$ contains a $(m-l)$-dimensional linear subspace defined over $\fq$. If $q>d$, then there are $r_1,\dots,r_{l}$ such that $r=\sum r_i$ and $0\leq r_i\leq \binom{m+d-i}{d-1}$ such that
$$|X(\fq)| \leq \sum_{i=1}^{l}H_{r_i}(d-1,m+1-i)+\pi_{m-l}.$$
\end{lemma}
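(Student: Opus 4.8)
The plan is to reduce the projective count to several affine counts governed by Theorem~\ref{Affine result}. First I would use that the $(m-l)$-dimensional linear subspace $L\subseteq X$ is defined over $\fq$ to make an $\fq$-linear change of coordinates bringing it to the standard flat $L=V(x_0,\dots,x_{l-1})$, which has dimension $m-l$. Since each $F_j$ vanishes on $L$, the space $W=\langle F_1,\dots,F_r\rangle_{\fq}$ lies in the ideal $(x_0,\dots,x_{l-1})$, so every $F\in W$ has a unique standard form $F=\sum_{k=0}^{l-1}x_k G_k^F$ in which $G_k^F\in\fqb[x_k,\dots,x_m]$ is homogeneous of degree $d-1$ (group the monomials of $F$ by their least index among $x_0,\dots,x_{l-1}$). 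Each map $F\mapsto G_k^F$ is $\fq$-linear, and this factorization is the device that trades degree $d$ for degree $d-1$.

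Next I would set up the bookkeeping that produces the $r_i$. Define the filtration $W=V_0\supseteq V_1\supseteq\dots\supseteq V_l=0$ by $V_i=W\cap(x_i,\dots,x_{l-1})=\{F\in W:G_0^F=\dots=G_{i-1}^F=0\}$, and put $r_i:=\dim_{\fq}(V_{i-1}/V_i)$, so that automatically $\sum_{i=1}^l r_i=\dim_{\fq}W=r$. The quotient $V_{i-1}/V_i$ injects into $\fqb[x_{i-1},\dots,x_m]_{d-1}$ via $F\mapsto G_{i-1}^F$, and dehomogenizing at $x_{i-1}=1$ identifies this with degree $\le d-1$ polynomials in the $m+1-i$ variables $x_i,\dots,x_m$; hence $r_i\le\binom{m+d-i}{d-1}$. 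The hypothesis on $r$ is exactly $\sum_{i=1}^{l-1}\binom{m+d-i}{d-1}<r\le\sum_{i=1}^{l}\binom{m+d-i}{d-1}$, which is consistent with these constraints.

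Then I would stratify $\PPP^m=L\sqcup\bigsqcup_{i=1}^l A_i$ with $A_i=\{x_0=\dots=x_{i-2}=0,\ x_{i-1}\neq0\}\cong\A^{m+1-i}$ (chart $x_{i-1}=1$), giving $|X(\fq)|\le\pi_{m-l}+\sum_{i=1}^l|X\cap A_i(\fq)|$, the term $\pi_{m-l}$ accounting for $X\cap L\subseteq L$. The core claim is the per-stratum estimate $|X\cap A_i(\fq)|\le H_{r_i}(d-1,m+1-i)$, obtained from Theorem~\ref{Affine result} applied in dimension $m+1-i$ to degree $\le d-1$ equations. On $A_i$ the standard form gives $F|_{A_i}=G_{i-1}^F(1,x_i,\dots,x_m)+\sum_{k=i}^{l-1}x_k G_k^F$, whose main part $G_{i-1}^F(1,\cdot)$ has degree $\le d-1$ and whose tail $\sum_{k=i}^{l-1}x_k G_k^F$ has degree $d$ but is supported only on the special directions $x_i,\dots,x_{l-1}$; this tail is what must be controlled. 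I would control it by further fibering $A_i$ over the projection $\PPP^m\setminus L\to\PPP^{l-1}$, $[x]\mapsto[x_0:\dots:x_{l-1}]$: along each fibre the special coordinates are constant, so the restricted system has degree $\le d-1$ in the remaining $m-l+1$ fibre variables, Theorem~\ref{Affine result} applies fibrewise, and one resums the fibre bounds over the stratum.

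The main obstacle is precisely this per-stratum estimate. Dropping the degree from $d$ to $d-1$ is clean only on the fibres of the projection, where the tail is frozen, so the essential work is the combinatorial resummation showing that the sum over the $q^{l-i}$ fibres of the stratum of the fibrewise bounds $H_{\rho(\bar a)}(d-1,m-l+1)$ is at most the single quantity $H_{r_i}(d-1,m+1-i)$, with the fibrewise ranks $\rho(\bar a)$ reconciled against the filtration data $r_i$. This step, rather than the geometry, is where the hypothesis $q>d$ (needed so that Theorem~\ref{Affine result} applies to the degree $\le d-1$ systems that arise) enters and where the bulk of the technical effort lies; it is the analogue here of the technical combinatorial argument promised in the outline.
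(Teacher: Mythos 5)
Your global skeleton matches the paper's: move the linear space to $V(x_0,\dots,x_{l-1})$, stratify $\PPP^m$ by the first nonvanishing coordinate among $x_0,\dots,x_{l-1}$, extract integers $r_i$ from a filtration of $W$, and apply Theorem~\ref{Affine result} stratum by stratum. But the per-stratum estimate $|X\cap A_i(\fq)|\leq H_{r_i}(d-1,m+1-i)$ --- which you correctly single out as the core claim --- is not proved, and the filtration you chose is what creates the obstruction. Filtering by the vanishing of the components $G_0^F,\dots,G_{i-1}^F$ of your ``standard form'' leaves, on the chart $x_0=\dots=x_{i-2}=0$, $x_{i-1}=1$, a residual tail $\sum_{k\ge i}x_kG_k^F$ of degree $d$, so Theorem~\ref{Affine result} cannot be invoked with degree $d-1$ in $m+1-i$ variables. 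Your proposed repair --- fibre $A_i$ over $\PPP^{l-1}$ and resum the fibrewise bounds $H_{\rho(\bar a)}(d-1,m-l+1)$ over the $q^{l-i}$ fibres --- is left entirely open, and it is not clear it can be closed: on an individual fibre the $r_i$ independent polynomials may restrict to a space of much smaller dimension (even to constants or to zero), so there is no evident lower bound on the ranks $\rho(\bar a)$, and without one the target inequality $\sum_{\bar a}H_{\rho(\bar a)}(d-1,m-l+1)\le H_{r_i}(d-1,m+1-i)$ is unsupported. (A smaller point: $W\cap(x_i,\dots,x_{l-1})$ is in general strictly larger than $\{F:G_0^F=\dots=G_{i-1}^F=0\}$, since a monomial divisible by both $x_0$ and $x_i$ contributes to $G_0^F$; only the latter description makes your quotient map $F\mapsto G_{i-1}^F$ well defined.)

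The paper sidesteps the tail entirely by filtering by ideal membership: it sets $W_i=W\cap\langle x_0,\dots,x_{i-1}\rangle$ and $r_i=\dim W_i-\dim W_{i-1}$. A basis $G_1,\dots,G_{r_i}$ of $W_i/W_{i-1}$ consists of polynomials lying in the ideal $\langle x_0,\dots,x_{i-1}\rangle$, so after substituting $x_0=\dots=x_{i-2}=0$ and $x_{i-1}=1$ each becomes a polynomial $g_j$ of degree at most $d-1$ in the $m+1-i$ variables $x_i,\dots,x_m$, with no leftover degree-$d$ part; and the $g_j$ are linearly independent because the kernel of this substitution on $S_d(m)\cap\langle x_0,\dots,x_{i-1}\rangle$ is exactly $S_d(m)\cap\langle x_0,\dots,x_{i-2}\rangle$, which meets $W$ in $W_{i-1}$. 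Then $X\cap A_i$ is identified with a subset of $Z(g_1,\dots,g_{r_i})$ inside $\A^{m+1-i}$, and Theorem~\ref{Affine result} gives the per-stratum bound in one step, with no fibering and no resummation. If you replace your filtration by this one, the rest of your outline goes through essentially verbatim.
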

\begin{proof}
After a linear change of variables, we can assume that $V(X_0,\dots,X_{l-1})\subseteq X$.
Since the original $m-l$ dimensional linear subspace was defined over $\fq$, this change of variables does not change the number of $\fq$-rational points.
Therefore, $F_1,\dots, F_r$ are all inside the ideal generated by $X_0,\dots, X_{l-1}$. Let $W$ be the vector space generated by $F_1,\dots, F_r$ and for $1\leq i\leq l$, let $W_i$ be the intersection of $W$ with the ideal generated by $X_0,\dots,X_{i-1}$. Set $W_0=\{0\}$. So $W_0\subseteq W_1\subseteq \dots \subseteq W_l=W$. Let $r_i=\dim(W_i)-\dim(W_{i-1})$. So we have $\sum r_i =\dim(W)=r$ and
\begin{align*}
0&\leq r_i =\dim(W_i/W_{i-1}) \leq \dim\Big((S_d(m)\cap \langle X_0,\dots,X_{i-1}\rangle) \Big/  (S_d(m)\cap \langle X_0,\dots,X_{i-2}\rangle)\Big)\\
&= \dim(X_{i-1} \fq[X_{i-1},\dots,X_m]_{d-1} )=\tbinom{m+d-i}{d-1}.
\end{align*}
Now we have
$$X=V(X_0,\dots X_{l-1}) \cup \bigcup_{i=1}^{l} X\cap \big( V(X_0,\dots, X_{i-2}) \setminus V(X_{i-1}) \big).$$
We have $|V(X_0,\dots X_{l-1}) (\fq)|=\pi_{m-l}$. For $1\leq i\leq l$, let $G_1,\dots, G_{r_i}$ be polynomials in $W_i$ that form a basis of $W_i/ W_{i-1}$. So
$$X\cap \big( V(X_0,\dots, X_{i-2})\setminus V(X_{i-1})\big) \subseteq V(X_0,\dots, X_{i-2}, G_1,\dots, G_{r_i}) \setminus V(X_{i-1}).$$
Let $g_j$ be the polynomial obtained from $G_j$ by plugging in $X_0=\dots,X_{i-2}=0$ and $X_{i-1}=1$. So $g_1,\dots, g_{r_{i}}$ are linearly independent polynomials in $\fq[X_{i},\dots, X_{m}]$ with degree at most $d-1$. Moreover, the number of points of $V(X_0,\dots, X_{i-2}, G_1,\dots, G_{r_i}) \setminus V(X_{i-1})$ in $\pmfq$ is the same as the number of points of $Z(g_1,\dots,g_{r_i})$ in $\mathbb{A}^{m+1-i}(\fq)$. By Theorem \ref{Affine result}, this is at most $H_{r_i}(d-1,m+1-i)$. The result follows.
\end{proof}

The following is a technical lemma that involves sums of $H_r(d,m)$ terms. Its proof is deferred to Appendix~\ref{Sec: Appendix}.

\begin{lemma}\label{Lem: sum of Hrk}
Suppose we are given $2 \leq l \leq m$ and $r_1, \dots, r_l$ such that $0\leq r_k\leq \binom{m+d-k}{d-1}$ and
\begin{align*}
\tbinom{m+d}{d}-\tbinom{m+d+1-l}{d} <\sum_{k=1}^l r_k \leq \tbinom{m+d}{d}-\tbinom{m+d-l}{d}.
\end{align*}
Let $r=\sum_{k=1}^l r_k$ and $r'=r-\binom{m+d}{d}+\binom{m+d+1-l}{d}$. If $q\geq d$, then we have
\[\sum_{k=1}^l H_{r_k}(d-1,m-k+1) \leq H_{r'} (d-1,m-l+1).\]    
\end{lemma}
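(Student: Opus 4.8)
The plan is to show that, after harmless reductions, all $l$ terms on the left and the single term on the right actually live in the \emph{same} space $\Omega(d-1,m-l+1)$, and that the whole inequality then collapses to a superadditivity property of one counting function. Throughout put $M_k=\tbinom{m+d-k}{d-1}$, and note that $M_k=|\Omega(d-1,m-k+1)|$ and that, by the hockey-stick identity, the hypothesis reads $\sum_{i=1}^{l-1}M_i<r\le\sum_{i=1}^{l}M_i$ with $r'=r-\sum_{i=1}^{l-1}M_i$. Since $q\ge d$, every coordinate satisfies $\gamma_i\le d-1<q$, so for $\gamma\in\Omega(d-1,n)$ the value $\sum_{i=1}^n\gamma_iq^{n-i}$ is a genuine base-$q$ expansion; hence the lexicographic order on $\Omega(d-1,n)$ agrees with the order by value, and $H_s(d-1,n)$ is the value of the $s$-th largest, equivalently the $(M-s+1)$-th smallest, tuple, where $M=|\Omega(d-1,n)|$.

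First I would record that each $r_k\ge1$. Indeed $r_k=r-\sum_{j\ne k}r_j\ge r-\sum_{j\ne k}M_j>\sum_{i=1}^{l-1}M_i-\big(\sum_{i=1}^{l}M_i-M_k\big)=M_k-M_l\ge0$, using that $M_\bullet$ is strictly decreasing; so $r_k\ge1$ and the exceptional convention $H_0=q^{\,\cdot}$ never occurs. Writing $s_k=M_k-r_k\ge0$, the term $H_{r_k}(d-1,m-k+1)$ is the value of the $(s_k+1)$-th \emph{smallest} tuple of $\Omega(d-1,m-k+1)$, and a short computation gives $\sum_{k=1}^{l}s_k=M_l-r'$, so in particular $s_k\le\sum_j s_j=M_l-r'\le M_l-1$.

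Next comes the key collapse. If $\gamma_1=0$ then $\sum_{i=1}^n\gamma_iq^{n-i}$ equals the value of $(\gamma_2,\dots,\gamma_{n+1})\in\Omega(d-1,n-1)$, and the $\gamma_1=0$ tuples are precisely the $|\Omega(d-1,n-1)|$ smallest ones; hence the $t$-th smallest value of $\Omega(d-1,n)$ equals that of $\Omega(d-1,n-1)$ whenever $t\le|\Omega(d-1,n-1)|$. Iterating down to level $m-l+1$, and using $s_k+1\le M_l=|\Omega(d-1,m-l+1)|$, each $H_{r_k}(d-1,m-k+1)$ equals the $(s_k+1)$-th smallest value of $\Omega(d-1,m-l+1)$. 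Thus, writing $w(t)$ for the $(t+1)$-th smallest value of $\Omega(d-1,m-l+1)$ (so $w(0)=0$), the target inequality becomes exactly $\sum_{k=1}^{l}w(s_k)\le w\big(\sum_{k=1}^{l}s_k\big)$, the superadditivity of $w$; by iteration it suffices to treat two summands, $w(a)+w(b)\le w(a+b)$ for $a+b\le M_l-1$.

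The remaining two-term statement I would phrase through counting. The values taken on $\Omega(d-1,m-l+1)$ are exactly the integers in $[0,q^{\,m-l+1})$ whose base-$q$ digit sum is at most $d-1$; let $\mathcal{V}$ be this set and $D(z)=|\mathcal{V}\cap[0,z)|$, so that $w$ enumerates $\mathcal{V}$ increasingly and $D(w(t))=t$. Then $w(a)+w(b)\le w(a+b)$ follows from the subadditivity $D(x+y)\le D(x)+D(y)$: applying it with $x=w(a)$, $y=w(b)$ shows at most $a+b$ elements of $\mathcal{V}$ lie below $w(a)+w(b)$, which forces $w(a)+w(b)\le w(a+b)$. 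Subadditivity in turn reduces to an injection $\{v\in\mathcal V:\,x\le v<x+y\}\hookrightarrow\mathcal V\cap[0,y)$, the natural candidate being $v\mapsto v-x$. I expect this last injection to be the main obstacle: it works verbatim when the base-$q$ subtraction $v-x$ involves no borrow, since then the digit sum only drops, but a borrow can inflate the digit sum past $d-1$. I would resolve the borrow case by inducting on the number of base-$q$ digits—proving $D_s(x+y)\le D_s(x)+D_s(y)$ for every threshold $s$ simultaneously and peeling off the least significant digit—where the only delicate point is a carry in the lowest digits of $x$ and $y$; this is precisely the technical core to be deferred to the appendix.
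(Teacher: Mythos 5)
Your reductions are correct and in fact reproduce the first half of the paper's own argument: the bound $r_k>M_k-M_l\ge 0$ (so $r_k\ge 1$ and the convention $H_0$ never intervenes), the observation that for $q\ge d$ the lexicographic order on $\Omega(d-1,n)$ agrees with the numerical order of $\sum_i\gamma_iq^{n-i}$, and the collapse of each $H_{r_k}(d-1,m-k+1)$ to the $(s_k+1)$-th smallest value of $\Omega(d-1,m-l+1)$ are exactly what the paper establishes by showing that $w_{r_k}(d-1,m-k+1)$ begins with $l-k$ zeros, so that $H_{r_k}(d-1,m-k+1)=H_{s_k}(d-1,m-l+1)$ with $s_k$ as you define it (up to the complementation $s\mapsto M-s$). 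Your reformulation of what remains as two-term superadditivity of $w$, equivalently subadditivity of the counting function $D$ of integers with base-$q$ digit sum at most $d-1$, is an accurate and rather illuminating translation of the paper's Lemma~\ref{Lem: Sum of Hsk(d,m)} for $l=2$, from which both you and the paper deduce the $l$-term version by iteration.

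The problem is that this two-term inequality \emph{is} the lemma: everything before it is bookkeeping, and you do not prove it. You correctly observe that the natural injection $v\mapsto v-x$ fails because borrows can raise the digit sum, but the repair you propose -- peeling off the least significant digit and inducting over all thresholds $s$ simultaneously -- does not obviously close. Writing $x=qx'+x_0$, $y=qy'+y_0$ with a carry $c$, the recursion forces you to bound $\sum_j A_{s-j}(x'+y'+c)$, and when $c=1$ the excess $\sum_j[\mathrm{ds}(x'+y')\le s-j]$ must be absorbed by the boundary terms $\min(x_0,(s-\mathrm{ds}(x')+1)^+)+\min(y_0,(s-\mathrm{ds}(y')+1)^+)$; controlling this requires relating $\mathrm{ds}(x'+y')$ to $\mathrm{ds}(x')$ and $\mathrm{ds}(y')$, which is itself obstructed by higher-order carries. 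This is not a routine verification: the paper's proof of the equivalent statement is an induction on $d$ with a five-case analysis occupying over a page of the appendix. As written, your proposal establishes a correct and clean reduction of Lemma~\ref{Lem: sum of Hrk} to Lemma~\ref{Lem: Sum of Hsk(d,m)}, but leaves the latter -- the actual content -- unproven.
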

\begin{proof}
See Appendix~\ref{Sec: Appendix}.
\end{proof}

\begin{proposition}\label{X Contains a linear subspace}
Suppose we have $1\leq l\leq m$, $1\leq c\leq d$ and
\begin{align*}
\tbinom{m+d}{d}-\tbinom{m+d+1-l}{d} <r \leq \tbinom{m+d}{d}-\tbinom{m+d-l}{d}.
\end{align*}
Suppose $F_1,\dots,F_r$ are linearly independent polynomials in $S_d(m)$ and $X=V(F_1,\dots,F_r)$ contains a $(m-l)$-dimensional linear subspace defined over $\fq$. If $q>d$, then we have
$$|X(\fq)|\leq H_{r-\binom{m+d}{d}+\binom{m+d+1-l}{d}}(d-1,m-l+1)+\pi_{m-l}.$$    
\end{proposition}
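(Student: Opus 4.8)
The plan is to combine the two preceding lemmas, Lemma~\ref{Lem: points less than sum of Hrk} and Lemma~\ref{Lem: sum of Hrk}, after peeling off the degenerate case $l=1$ (which falls outside the range $2\le l\le m$ assumed in Lemma~\ref{Lem: sum of Hrk}). I note at the outset that the parameter $c$ plays no role in this statement; it appears only to match the indexing of Theorem~\ref{Thm: complete GDC conjecture}, so I would simply ignore it here.

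First I would dispose of the case $l=1$. Here the hypothesis on $r$ reads $0<r\le\binom{m+d}{d}-\binom{m+d-1}{d}=\binom{m+d-1}{d-1}$, and the target index is $r'=r-\binom{m+d}{d}+\binom{m+d}{d}=r$. Applying Lemma~\ref{Lem: points less than sum of Hrk} with $l=1$ (using $q>d$) produces a single integer $r_1=r$ together with the bound
$$|X(\fq)|\le H_{r_1}(d-1,m)+\pi_{m-1}=H_r(d-1,m)+\pi_{m-1},$$
which is already the desired inequality. No appeal to Lemma~\ref{Lem: sum of Hrk} is needed in this case.

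For $2\le l\le m$, I would first invoke Lemma~\ref{Lem: points less than sum of Hrk}, which (since $q>d$) furnishes integers $r_1,\dots,r_l$ with $\sum_{i=1}^l r_i=r$, each satisfying $0\le r_i\le\binom{m+d-i}{d-1}$, and the estimate
$$|X(\fq)|\le \sum_{i=1}^l H_{r_i}(d-1,m+1-i)+\pi_{m-l}.$$
These integers meet exactly the hypotheses of Lemma~\ref{Lem: sum of Hrk}: the per-index bound $0\le r_i\le\binom{m+d-i}{d-1}$ is guaranteed by Lemma~\ref{Lem: points less than sum of Hrk}, and the constraint $\binom{m+d}{d}-\binom{m+d+1-l}{d}<\sum_i r_i\le\binom{m+d}{d}-\binom{m+d-l}{d}$ is precisely the hypothesis on $r$ in the proposition. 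Setting $r'=r-\binom{m+d}{d}+\binom{m+d+1-l}{d}$ and applying Lemma~\ref{Lem: sum of Hrk} (valid since $q\ge d$) yields $\sum_{i=1}^l H_{r_i}(d-1,m-i+1)\le H_{r'}(d-1,m-l+1)$. Substituting into the previous display gives $|X(\fq)|\le H_{r'}(d-1,m-l+1)+\pi_{m-l}$, which is the claimed bound.

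Consequently the argument has no genuine obstacle beyond the two lemmas already in hand; the only points requiring care are bookkeeping ones — checking that the indices $r_i$ output by Lemma~\ref{Lem: points less than sum of Hrk} satisfy the exact range inequalities that Lemma~\ref{Lem: sum of Hrk} demands, and remembering that $l=1$ must be handled separately because Lemma~\ref{Lem: sum of Hrk} assumes $l\ge 2$. The substantive content lives entirely in those two results: the geometric decomposition of $X$ around the linear subspace combined with the Heijnen--Pellikaan affine bound (Lemma~\ref{Lem: points less than sum of Hrk}), and the purely combinatorial inequality comparing a sum of quantities $H_{r_i}(d-1,\,\cdot\,)$ with a single such quantity (Lemma~\ref{Lem: sum of Hrk}).
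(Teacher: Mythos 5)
Your proof is correct and follows exactly the same route as the paper, whose proof of this proposition is simply the observation that it follows from Lemma~\ref{Lem: points less than sum of Hrk} and Lemma~\ref{Lem: sum of Hrk}. Your explicit separate treatment of the case $l=1$ (where Lemma~\ref{Lem: sum of Hrk} does not apply but the conclusion of Lemma~\ref{Lem: points less than sum of Hrk} already gives the bound since $r_1=r=r'$) is a bookkeeping detail the paper leaves implicit, and you handle it correctly.
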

\begin{proof}
This follows from Lemma \ref{Lem: points less than sum of Hrk} and Lemma \ref{Lem: sum of Hrk}.
\end{proof}

\begin{proof}[Proof of Theorem \ref{Thm: complete GDC conjecture}]
Proposition~\ref{Lower bound for erdm} shows that the conjectured formula is a lower bound for $e_r(d,m)$. Proposition~\ref{X does not have linear subspace} and Proposition~\ref{X Contains a linear subspace} together show that the conjectured formula is an upper bound for $e_r(d,m)$.
\end{proof}

\section{Case $r\leq {m+d-1 \choose d-1}$}\label{sec: proof for incomplete gdc}

In this section, our goal is to prove Theorem \ref{Thm l=1 with e}. We will do this by induction on $m$. The arguments of this section involve several technical lemmas that will be proven in the Appendix \ref{Sec: Appendix}.

Given a linear subspace $W\subseteq S_d(m)$ with $\dim(W)=r$ and $L\in S_1(m)$, Beelen, Datta and Ghorpade in \cite{beelen2018maximum} define $t_W(L)=\dim(W\cap LS_{d-1}(m))$ and
\[t_W=\max\{t_W(L)\mid L\in S_1(m)\}.\]
Note that $0\leq t_W\leq r$. We will prove Theorem~\ref{Thm l=1 with e} by induction on $m$. We divide the induction step into several lemmas depending on which range $t_W$ is in.

\begin{lemma}
\cite[Lemma 2.9]{beelen2018maximum}\label{linear factor}
Suppose $q \geq d$ and $F_1,\dots,F_r$ are linearly independent in $S_d(m)$. If there is some $L \in S_1(m)$ that divides $F_1, \dots, F_r$, then we have
$$|V(F_1, \dots, F_r)(\fq)| \leq H_r(d-1,m)+\pi_{m-1}.$$
\end{lemma}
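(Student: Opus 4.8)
The plan is to factor out the common linear form $L$ and reduce the count to an affine one governed by Theorem~\ref{Affine result}. First I would write each $F_i = L\,G_i$ with $G_i \in S_{d-1}(m)$; since $S(m)$ is an integral domain, multiplication by $L$ is injective, so the linear independence of the $F_i$ passes to the $G_i$. A point $P \in \pmfq$ lies in $V(F_1,\dots,F_r)$ precisely when $L(P)G_i(P)=0$ for every $i$, which happens if and only if $L(P)=0$ or $G_i(P)=0$ for all $i$. Hence $V(F_1,\dots,F_r)(\fq) = V(L)(\fq) \cup V(G_1,\dots,G_r)(\fq)$, and writing this as the disjoint union of $V(L)(\fq)$ with the points of $V(G_1,\dots,G_r)$ lying off the hyperplane $V(L)$ gives $|V(F_1,\dots,F_r)(\fq)| = \pi_{m-1} + |V(G_1,\dots,G_r)(\fq)\setminus V(L)(\fq)|$, using that $V(L)$ is a hyperplane in $\PPP^m$ and so contributes exactly $\pi_{m-1}$ points.

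It remains to bound the number of $\fq$-points of $V(G_1,\dots,G_r)$ off $V(L)$. After an $\fq$-linear change of coordinates, which preserves $\fq$-rational points, I may assume $L = X_0$, so these points lie in the chart $X_0 \neq 0 \cong \A^m(\fqb)$. Dehomogenizing by setting $X_0=1$ produces $g_i(x_1,\dots,x_m)=G_i(1,x_1,\dots,x_m)$ of degree at most $d-1$, and the points in question are exactly the affine zeros $Z(g_1,\dots,g_r)(\fq)$. The key observation is that dehomogenization with respect to $X_0$ is a linear \emph{isomorphism} from $S_{d-1}(m)$ onto the space of polynomials of degree $\leq d-1$ in $m$ variables (its inverse being homogenization), so the $g_i$ are again linearly independent. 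Because all the $F_i$ lie in $L\cdot S_{d-1}(m)$, a space of dimension $\binom{m+d-1}{d-1}$, we automatically have $r \leq \binom{m+d-1}{d-1}$, so Theorem~\ref{Affine result} applies with $d$ replaced by $d-1$; its hypothesis is $q \geq (d-1)+1 = d$, which is exactly our assumption. This gives $|Z(g_1,\dots,g_r)(\fq)| \leq e_r^{\A}(d-1,m) = H_r(d-1,m)$.

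Combining the two estimates yields $|V(F_1,\dots,F_r)(\fq)| \leq \pi_{m-1} + H_r(d-1,m)$, which is the claim. I do not expect a serious obstacle, since the argument is a clean reduction; the two points that genuinely need care are (i) that dehomogenization preserves linear independence, where one must invoke that it is a bijection onto the degree-$\leq(d-1)$ polynomials rather than merely a surjection, and (ii) that the numerical hypothesis $q \geq d$ lines up precisely with the requirement $q \geq (d-1)+1$ of the affine theorem after the shift $d \mapsto d-1$. A minor secondary point is to carry out both the factorization $F_i = L\,G_i$ and the change of coordinates over $\fq$, so that no $\fq$-rational points are lost; both are immediate because $L$ and the $F_i$ have coefficients in $\fq$.
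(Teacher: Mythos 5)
Your proof is correct and is essentially the argument behind the cited result: the paper itself does not reprove Lemma~\ref{linear factor} but quotes it from Beelen--Datta--Ghorpade, whose proof is exactly this reduction (factor out $L$, pass to the complement of the hyperplane $V(L)$, dehomogenize, and invoke the affine Heijnen--Pellikaan bound with $d-1$ in place of $d$). The only cosmetic remark is that when $d=1$ the affine theorem is being applied with degree parameter $0$, outside its stated hypotheses, but that case is degenerate ($r=1$, $V(F_1)=V(L)$) and the bound holds trivially.
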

The Lemma~\ref{linear factor} states that if $t_W=r$, then $|V(W)(\fq)|\leq H_r(d-1,m)+\pi_{m-1}$.
Next, we will consider the cases:
\begin{enumerate}
    \item $t_W \leq \binom{m+e-1}{e-1}$;
    \item $\binom{m+e}{e}< t_W <r$;
    \item $\binom{m+e-1}{e-1}< t_W \leq\binom{m+e}{e}$.
\end{enumerate}

We start with the case $t_W \leq \binom{m+e-1}{e-1}$, we will use the following lemma.
\begin{lemma}\cite[Lemma 2.5]{beelen2018maximum}\label{any hyperplane}
Let $X$ be any subset of $\mathbb{P}^m(\fq)$. Let $a=\max_{\HH}|X \cap \HH|$, where the max is taken over all hyperplanes. Then $|X| \leq aq+1$. If $X \neq \PPP^m(\fq)$, then $|X| \leq aq$.
\end{lemma}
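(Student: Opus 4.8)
The plan is to prove both inequalities at once by a single incidence double-count between the points of $X$ and the hyperplanes of $\PPP^m(\fq)$. The facts I would record at the outset are the standard point/hyperplane dualities over $\fq$: there are exactly $\pi_m$ hyperplanes in $\PPP^m(\fq)$, each hyperplane has exactly $\pi_{m-1}$ rational points, and dually each point of $\PPP^m(\fq)$ lies on exactly $\pi_{m-1}$ hyperplanes. In particular $a\leq\pi_{m-1}$, since $|X\cap\HH|\leq|\HH(\fq)|=\pi_{m-1}$ for every hyperplane $\HH$.

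First I would count the set $S$ of incident pairs $(P,\HH)$ with $P\in X$ and $P\in\HH$ in two ways. Summing over points gives $|S|=\sum_{P\in X}\#\{\HH:P\in\HH\}=|X|\,\pi_{m-1}$, while summing over hyperplanes gives $|S|=\sum_{\HH}|X\cap\HH|\leq a\,\pi_m$. Hence $|X|\,\pi_{m-1}\leq a\,\pi_m$. Using the identity $\pi_m=q\,\pi_{m-1}+1$ this rearranges to $|X|\leq aq+\frac{a}{\pi_{m-1}}$, and since $a\leq\pi_{m-1}$ the fractional term is at most $1$, which already yields the first bound $|X|\leq aq+1$.

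For the refinement I would assume $X\neq\PPP^m(\fq)$ and split into two cases. If $a<\pi_{m-1}$, then $\frac{a}{\pi_{m-1}}<1$, so the displayed inequality forces $|X|<aq+1$, and integrality of $|X|$ gives $|X|\leq aq$. If instead $a=\pi_{m-1}$, then $aq=q\,\pi_{m-1}=\pi_m-1=|\PPP^m(\fq)|-1$, so the desired bound $|X|\leq aq$ is immediate from the hypothesis $X\neq\PPP^m(\fq)$. In both cases $|X|\leq aq$, completing the proof.

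The argument is essentially routine, and in fact the sketch above is close to a full proof; the only point needing care is the boundary case $a=\pi_{m-1}$ in the refinement, where the fractional slack $\frac{a}{\pi_{m-1}}$ in the double-count equals $1$ and so gives nothing, forcing one to invoke the hypothesis $X\neq\PPP^m(\fq)$ directly instead. I would therefore state the two ingredients $a\leq\pi_{m-1}$ and $\pi_m=q\,\pi_{m-1}+1$ explicitly, since they are exactly what convert the crude incidence estimate $|X|\,\pi_{m-1}\leq a\,\pi_m$ into the sharp ``$aq+1$'' and ``$aq$'' conclusions.
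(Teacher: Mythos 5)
Your argument is correct, but note that the paper does not actually prove this lemma: it is quoted verbatim from Beelen--Datta--Ghorpade with a citation, so there is no in-paper proof to match against. Your incidence double count is a clean, self-contained justification, and every step checks out: $|S|=|X|\,\pi_{m-1}$ because each point lies on exactly $\pi_{m-1}$ hyperplanes, $|S|\leq a\pi_m$ because there are $\pi_m$ hyperplanes, and the two ingredients you isolate ($a\leq\pi_{m-1}$ and $\pi_m=q\pi_{m-1}+1$) are exactly what turn $|X|\pi_{m-1}\leq a\pi_m$ into $|X|\leq aq+a/\pi_{m-1}\leq aq+1$; the case split $a<\pi_{m-1}$ versus $a=\pi_{m-1}$ for the refined bound is handled correctly, and the boundary case is precisely where the hypothesis $X\neq\PPP^m(\fq)$ must be invoked. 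It is worth observing that this is the same technique the present paper uses in Corollary~\ref{Cor bound} and Corollary~\ref{cor: bound for non-irreducible}, except that there the authors count the complementary incidences, i.e.\ pairs $(P,\HH)$ with $P\notin\HH$, of which there are exactly $q^m$ per point; either bookkeeping yields the same estimates, and yours has the small advantage that the extremal case $|X\cap\HH|=a$ for all $\HH$ is visibly where equality occurs. The only caveat, which does not affect any application in the paper (the lemma is used with $m\geq 2$), is that your division by $\pi_{m-1}$ tacitly assumes $m\geq 1$; the case $m=0$ is degenerate and trivially checked separately.
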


\begin{lemma}\label{small t}
Suppose $m\geq 2$, $d\geq 2$, $0\leq e\leq d-2$ and $\binom{m+e}{e}< r \leq \binom{m+e+1}{e+1}$.
Let $W \subseteq S_d(m)$ be a linear subspace of dimension $r$ with $t_W \leq \binom{m+e-1}{e-1}$. If for each $0\leq s\leq \binom{m-1+e+1}{e+1}$ we have $e_{s}(d,m-1)\leq H_{s}(d-1,m-1)+\pi_{m-2}$, then we also have $|V(W)(\fq)| < H_r(d-1,m)+\pi_{m-1}$.
\end{lemma}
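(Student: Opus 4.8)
The plan is to bound $|V(W)(\fq)|$ by a hyperplane-slicing argument fed by the inductive hypothesis, and then to reduce everything to a single combinatorial inequality between the weights $H_\ast$. Write $X=V(W)(\fq)\subseteq\PPP^m(\fq)$. Since $W\neq 0$ and $q$ is large, $X\neq\PPP^m(\fq)$, so Lemma~\ref{any hyperplane} gives $|X|\le aq$, where $a=\max_{\HH}|X\cap\HH|$ and $\HH$ ranges over the hyperplanes of $\PPP^m(\fq)$. It therefore suffices to bound $a$ by $H_{s}(d-1,m-1)+\pi_{m-2}$ for a suitable index $s$ and then show $q\bigl(H_{s}(d-1,m-1)+\pi_{m-2}\bigr)<H_r(d-1,m)+\pi_{m-1}$.

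First I would analyze a single slice. Fix $\HH=V(L)$ with $L\in S_1(m)$ and identify $\HH\cong\PPP^{m-1}$. Restriction of polynomials to $\HH$ is the linear map $S_d(m)\to S_d(m-1)$ whose kernel is $L\,S_{d-1}(m)$, so the image $\overline W$ of $W$ has dimension $r-t_W(L)$, and $X\cap\HH=V(\overline W)(\fq)$; taking a basis of $\overline W$ gives $|X\cap\HH|\le e_{\,r-t_W(L)}(d,m-1)$. Now the hypotheses enter. Since $r>\binom{m+e}{e}$ and $t_W(L)\le t_W\le\binom{m+e-1}{e-1}$, Pascal's rule gives
\[
r-t_W(L)\;>\;\tbinom{m+e}{e}-\tbinom{m+e-1}{e-1}\;=\;\tbinom{m+e-1}{e}\;=\;\tbinom{(m-1)+e}{e},
\]
so every slice dimension lies strictly above the lower endpoint of the $e$-range in $m-1$ variables. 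Because $e_s(d,m-1)$ is non-increasing in $s$, the largest value $t_W(L)=t_W$ controls the maximum, giving $a\le e_{\,r-t_W}(d,m-1)$. Setting $s^{*}=\min\{\,r-t_W,\ \binom{m+e}{e+1}\,\}$ and using monotonicity once more, $a\le e_{s^{*}}(d,m-1)$ with $0\le s^{*}\le\binom{(m-1)+e+1}{e+1}$, i.e.\ $s^{*}$ lies in the range covered by the inductive hypothesis (and indeed in the full $e$-range for $m-1$ variables). Hence $a\le H_{s^{*}}(d-1,m-1)+\pi_{m-2}$.

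Combining the two steps and using $q\,\pi_{m-2}=\pi_{m-1}-1$, we get $|X|\le q\,H_{s^{*}}(d-1,m-1)+\pi_{m-1}-1$, so the desired strict inequality follows once one proves
\[
q\,H_{s^{*}}(d-1,m-1)\;\le\;H_r(d-1,m).
\]
I would establish this from the recursive block structure of the lexicographic ordering. Since both $r$ and $s^{*}$ lie in their respective $e$-ranges, $\omega_r(d-1,m)$ and $\omega_{s^{*}}(d-1,m-1)$ share the leading coordinate $d-e-2$, contributing $(d-e-2)q^{m-1}$ to each side; peeling this term off reduces the inequality to one of the same shape with $d-1$ replaced by $e+1$ and $m$ by $m-1$, while the coupling $r-s^{*}\le\binom{m+e-1}{e-1}$ becomes $r''\le s''$ for the reduced ranks. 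One then inducts on $m$ with an easy base case, the two extremes (both indices minimal, and both maximal) giving exact equality. This step is elementary but bookkeeping-heavy, and I would relegate it to Appendix~\ref{Sec: Appendix}.

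The main obstacle is precisely this last combinatorial inequality together with the range bookkeeping in the second paragraph: one must verify that the clamped index $s^{*}$ always lands inside the interval covered by the inductive hypothesis, and that the resulting $H$-comparison is tight, since equality occurs at both ends and leaves no slack to absorb errors. By contrast, the geometric input — the slicing identity $\dim\overline W=r-t_W(L)$ and the monotonicity of $e_s$ — is routine; the arithmetic of the lexicographic weights is where the real work lies.
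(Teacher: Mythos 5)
Your proposal follows essentially the same route as the paper: slice by hyperplanes, use Lemma~\ref{any hyperplane} to get $|X|\le aq$, bound each slice by $e_{r-t_W(L)}(d,m-1)$ via the quotient $S_d(m)/LS_{d-1}(m)\cong S_d(m-1)$, invoke the hypothesis on $e_s(d,m-1)$, and reduce to the inequality $q\,H_{s}(d-1,m-1)\le H_r(d-1,m)$; your clamp $s^{*}=\min\{r-t_W,\binom{m+e}{e+1}\}$ is just a compact packaging of the paper's two cases. The one slip is in your sketch of the deferred combinatorial inequality: the ``coupling'' $r-s^{*}\le\binom{m+e-1}{e-1}$ holds only when the clamp is inactive ($s^{*}=r-t_W$, which is the paper's Lemma~\ref{Hrdm Hrdm-1}); when $s^{*}=\binom{m+e}{e+1}$ one can have $r-s^{*}$ as large as $\binom{m+e}{e}$, but there the inequality is immediate since $H_{\binom{m+e}{e+1}}(d-1,m-1)=(d-e-2)q^{m-2}$ while $H_r(d-1,m)\ge(d-e-2)q^{m-1}$ (Lemma~\ref{Lem: Special value of Hrdm}), exactly as in the paper's Case~2.
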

\begin{proof}
Consider a hyperplane $\HH$ given by $L=0$ for some non-zero $L\in S_1(m)$. Let $L,L_0,\dots,L_{m-1}$ be a basis of $S_1(m)$. Consider the map $\phi_{\HH}: S_d(m) \to S_{d}(m-1)$ in which a homogeneous polynomial in $\fq[x_0,\dots,x_m]$ is written in terms of $L,L_0,\dots,L_{m-1}$ and then we plug in $L=0$, $L_0=y_0,\dots,L_{m-1}=y_{m-1}$. Note that $\phi_{\HH}$ induces an isomorphism $S_d(m)/(LS_{d-1}(m)) \cong S_{d}(m-1)$. 
 
Denote $t=t_W(L)$, so $t=\dim(W\cap \ker(\phi_{\HH}))$ and $r-t=\dim(\phi_{\HH}(W))$. Note that
\[|V(W)(\fq)\cap \HH|
=|V( W)(\fq)\cap V(L)|
=|V(\phi_{\HH}(W))(\fq)| \leq  e_{r-t}(d,m-1).\]
By Lemma~\ref{any hyperplane}, we have
$|V(W)(\fq)| \leq q e_{r-t}(d,m-1)$.

\begin{itemize}
    \item Case 1: $r-t\leq \binom{m-1+e+1}{e+1}$, then we have $e_{r-t}(d,m-1)\leq H_{r-t}(d-1,m-1)+\pi_{m-2}$.
    Recall that  $t \leq \binom{m+e-1}{e-1}$. By Lemma \ref{Hrdm Hrdm-1}, we have
    \[q(H_{r-t}(d-1,m-1)+\pi_{m-2})
    \leq H_r(d-1,m)+q\pi_{m-2} = H_r(d-1,m)+\pi_{m-1}-1.\]
    \item Case 2: $\binom{m-1+e+1}{e+1}\leq r-t$. Then, by Lemma~\ref{Lem: Special value of Hrdm} we have
    \begin{align*}
    e_{r-t}(d,m-1)
    &\leq e_{\binom{m-1+e+1}{e+1}}(d,m-1)
    \leq H_{\binom{m-1+e+1}{e+1}}(d-1,m-1)+\pi_{m-2}\\
    &=(d-1-(e+1))q^{m-2}+\pi_{m-2}.  
    \end{align*}
    By Lemma~\ref{Lem: Special value of Hrdm}, we see that
    \begin{align*}
    &q((d-e-2)q^{m-2}+\pi_{m-2}) \\
    &= H_{\binom{m+e+1}{e+1}}(d-1,m) +\pi_{m-1}-1
    < H_{r}(d-1,m)+\pi_{m-1}. \qedhere
    \end{align*}
\end{itemize}
\end{proof}

Next, we consider the case where $\binom{m+e}{e}< t_W <r$. We will make use of the following lemma.
\begin{lemma}\label{rec2}
Assume that $1<r \leq \binom{m+d-1}{d-1}$.
Then for any $r$ dimensional subspace $W$ of $S_d(m)$ with $t_W=t$, we have 
$$|V(W)(\fq)| \leq e_{r-t}(d,m-1)+H_t(d-1,m).$$
Moreover if $t\geq 2$ and $\gcd(W)=1$, then we have
$$|V(W)(\fq)|\leq e_{r-t}(d,m-1) +(d-1)^2 q^{m-2}.$$
\end{lemma}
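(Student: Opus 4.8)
The plan is to split $V(W)(\fq)$ along a hyperplane and bound the two pieces separately. Choose $L\in S_1(m)$ achieving the maximum $t_W(L)=t_W=t$, and after an $\fq$-linear change of coordinates assume $L=x_0$; write $\HH=V(L)$. Exactly as in the proof of Lemma~\ref{small t}, the reduction map $\phi_{\HH}\colon S_d(m)\to S_d(m-1)$ identifies $S_d(m)/LS_{d-1}(m)$ with $S_d(m-1)$ and has $\dim(W\cap\ker\phi_{\HH})=t$, so $\dim\phi_{\HH}(W)=r-t$ and the points of $V(W)$ lying on $\HH$ satisfy $|V(W)(\fq)\cap\HH|=|V(\phi_{\HH}(W))(\fq)|\le e_{r-t}(d,m-1)$. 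It therefore remains to bound the points off $\HH$, and both displayed inequalities share this ``on-$\HH$'' term.

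For the points off $\HH$, set $U=W\cap LS_{d-1}(m)$, so $\dim U=t$, and write each element of $U$ as $Lg$ with $g\in S_{d-1}(m)$; since multiplication by $L$ is injective this gives a $t$-dimensional space $G\subseteq S_{d-1}(m)$. If $P\notin\HH$ and $P\in V(W)$, then $L(P)g(P)=0$ with $L(P)\neq0$, so $g(P)=0$ for every $g\in G$. Dehomogenizing with respect to $x_0$ sends these points into $Z(\tilde g_1,\dots,\tilde g_t)\subseteq\A^m(\fq)$ for a basis $g_1,\dots,g_t$ of $G$, where $\deg\tilde g_i\le d-1$ and the $\tilde g_i$ remain linearly independent. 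Hence $|V(W)(\fq)\setminus\HH|\le e_t^{\A}(d-1,m)=H_t(d-1,m)$ by Theorem~\ref{Affine result} (valid once $q\ge d$, as assumed throughout this section). Combined with the on-$\HH$ bound this yields the first inequality.

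For the second inequality, assuming $t\ge2$ and $\gcd(W)=1$, I would improve the off-$\HH$ count to $(d-1)^2q^{m-2}$. Let $h=\gcd(G)$ and $s=\deg h$; since $G$ has dimension $t\ge2$ and consists of forms of degree $d-1$, the case $s=d-1$ would force every element of $G$ to be a scalar multiple of $h$, contradicting $\dim G\ge2$, so $s\le d-2$. Write each $g=hg'$, let $G'$ be the resulting $t$-dimensional space with $\gcd(G')=1$, and split the off-$\HH$ points $P\in V(W)$ according to whether $h(P)\neq0$ or $h(P)=0$. When $h(P)\neq0$ we get $g'(P)=0$ for all $g'\in G'$; since $\gcd(G')=1$, $\dim G'\ge2$, and $q\ge d>\deg g'$, a union-of-proper-subspaces argument produces $g',g''\in G'$ with $\gcd(g',g'')=1$, so $V(g',g'')$ has dimension $\le m-2$ and degree $\le(d-1-s)^2$, contributing at most $(d-1-s)^2q^{m-2}$ affine points via the standard bound $|Z(\fq)|\le(\cdeg Z)\,q^{\dim Z}$ for affine algebraic sets. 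When $h(P)=0$, the hypothesis $\gcd(W)=1$ with the same counting argument (using $q>s$) yields $F_0\in W$ with $\gcd(F_0,h)=1$; then $V(W)\cap V(h)\subseteq V(F_0,h)$ has dimension $\le m-2$ and degree $\le ds$, contributing at most $ds\,q^{m-2}$ points. The elementary identity $(d-1-s)^2+ds=(d-1)^2+s\bigl(s-(d-2)\bigr)\le(d-1)^2$ for $0\le s\le d-2$ then gives $|V(W)(\fq)\setminus\HH|\le(d-1)^2q^{m-2}$, and adding the on-$\HH$ term completes the second inequality.

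I expect the main obstacle to be the second inequality. The difficulty is that $\gcd(W)=1$ does \emph{not} in general imply $\gcd(G)=1$: dividing out $L$ can create a spurious common factor $h$ among the forms of $G$ (one can write down explicit $W$ where this happens), so one cannot simply apply Bézout to two elements of $G$. The crux is therefore the case split on $h$, together with the two finite-field existence statements --- extracting a genuinely coprime pair from a gcd-one subspace and a form $F_0\in W$ coprime to $h$ --- both of which require $q$ to exceed the relevant degrees, hence $q\ge d$. The final bookkeeping reduces to the clean inequality $(d-1-s)^2+ds\le(d-1)^2$, which is exactly what makes the degree-$(d-1)$ forms (rather than the original degree-$d$ generators) the right objects to count with.
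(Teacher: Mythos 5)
Your proof is correct and is essentially the argument the paper points to: the paper omits the proof of Lemma~\ref{rec2} and defers to \cite[Lemma 4.1]{beelen2018maximum}, whose proof is exactly this hyperplane split (on-$\HH$ bounded by $e_{r-t}(d,m-1)$, off-$\HH$ by the affine Heijnen--Pellikaan bound) together with the $\gcd$ case analysis on $h=\gcd(G)$ and the inequality $(d-1-s)^2+ds\le(d-1)^2$. In particular, you correctly identified the one genuine subtlety --- that $\gcd(W)=1$ does not force $\gcd(G)=1$ --- and handled it the same way the cited proof does.
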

\begin{proof}
The proof is very similar to the proof \cite[Lemma 4.1]{beelen2018maximum}, so we omit it here.
\end{proof}

\begin{lemma}\label{large t}
Suppose $m\geq 2$, $d\geq 2$, $0\leq e\leq d-2$ and $\binom{m+e}{e}< r \leq \binom{m+e+1}{e+1}$.
Let $W \subseteq S_d(m)$ be a linear subspace of dimension $r$ with $\binom{m+e}{e}< t_W <r$. If $q\geq d+e$, then $|V(W)(\fq)| \leq H_r(d-1,m)+\pi_{m-1}$.
\end{lemma}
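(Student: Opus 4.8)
The plan is to combine the hyperplane-section recursion of Lemma~\ref{rec2} with the inductive hypothesis in dimension $m-1$, and then reduce the whole statement to a single combinatorial inequality among the quantities $H_\bullet(d-1,\bullet)$. Write $t=t_W$. Since $t>\binom{m+e}{e}\geq 1$ we have $t\geq 2$, and since $t<r$ we have $r-t\geq 1$; moreover $2\leq r\leq\binom{m+e+1}{e+1}\leq\binom{m+d-1}{d-1}$, so the first inequality of Lemma~\ref{rec2} applies with no hypothesis on $\gcd(W)$ and gives $|V(W)(\fq)|\leq e_{r-t}(d,m-1)+H_t(d-1,m)$. To control the first summand I would locate $r-t$ in the inductive range: by Pascal's rule $\binom{m+e+1}{e+1}-\binom{m+e}{e}=\binom{m+e}{e+1}=\binom{(m-1)+(e+1)}{e+1}$, so from $r\leq\binom{m+e+1}{e+1}$ and $t\geq\binom{m+e}{e}+1$ we get $1\leq r-t<\binom{(m-1)+(e+1)}{e+1}$. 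Hence the inductive hypothesis (Theorem~\ref{Thm l=1 with e} in dimension $m-1$, as invoked for $m-1$ exactly as in Lemma~\ref{small t}) yields $e_{r-t}(d,m-1)\leq H_{r-t}(d-1,m-1)+\pi_{m-2}$.

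Substituting and using $\pi_{m-1}=\pi_{m-2}+q^{m-1}$, the lemma reduces to the purely combinatorial inequality
\[
H_{r-t}(d-1,m-1)+H_t(d-1,m)\;\leq\;H_r(d-1,m)+q^{m-1},
\]
valid whenever $\binom{m+e}{e}<t<r\leq\binom{m+e+1}{e+1}$ and $q\geq d+e$. To attack it I would peel off the leading coordinate: for $s$ in the block $\binom{m+e}{e}<s\leq\binom{m+e+1}{e+1}$ the lexicographically $s$-th composition $\omega_s(d-1,m)$ has first entry $\beta_1=d-e-2$, and deleting it identifies the remaining entries with a composition of $e+1$ into $m$ parts, giving $H_s(d-1,m)=(d-e-2)q^{m-1}+H_{s-\binom{m+e}{e}}(e+1,m-1)$. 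Applying this to both $t$ and $r$, and setting $t'=t-\binom{m+e}{e}$, $r'=r-\binom{m+e}{e}$ (so that $r-t=r'-t'$), cancels the common term $(d-e-2)q^{m-1}$ and turns the displayed inequality into
\[
H_{r'-t'}(d-1,m-1)+H_{t'}(e+1,m-1)\;\leq\;H_{r'}(e+1,m-1)+q^{m-1},
\]
with $1\leq t'<r'\leq\binom{m+e}{e+1}$.

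I expect this last inequality, which I would record as a technical lemma proved in Appendix~\ref{Sec: Appendix} in the spirit of Lemma~\ref{Lem: sum of Hrk}, to be the main obstacle. The reason is that both sides are of order $q^{m-1}$, so the estimate is essentially tight and is exactly where the hypothesis $q\geq d+e$ is consumed (already the smallest case $m=3$, $d=3$, $e=0$ forces $q\geq 3=d+e$). To prove it I would compare the relevant lex-order compositions explicitly, inducting on $e$ (equivalently, peeling further leading coordinates) and using the boundary value $H_{\binom{m+e+1}{e+1}}(d-1,m)=(d-e-2)q^{m-1}$ from Lemma~\ref{Lem: Special value of Hrdm} together with the monotonicity of $H_\bullet$ to absorb the subcase in which $r'-t'$ drops into a higher block (where $H_{r'-t'}(d-1,m-1)$ picks up a larger leading coefficient). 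Everything outside this combinatorial core is bookkeeping, so the crux is establishing the tight $H$-inequality above.
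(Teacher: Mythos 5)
Your reduction is sound and the final inequality you isolate is true, but you take a genuinely different and considerably more elaborate route than the paper, and you misjudge where the difficulty lies. The paper's proof of this lemma is a four-line computation that needs \emph{no} inductive hypothesis in dimension $m-1$: after applying the first inequality of Lemma~\ref{rec2} exactly as you do, it simply uses monotonicity of $e_{\bullet}$ and $H_{\bullet}$ together with the two explicit values $e_1(d,m-1)=(d-1)q^{m-2}+\pi_{m-2}$ (Serre's bound) and $H_{\binom{m+e}{e}+1}(d-1,m)=(d-2-e)q^{m-1}+(e+1)q^{m-2}$, so that
\[
|V(W)(\fq)|\leq e_1(d,m-1)+H_{\binom{m+e}{e}+1}(d-1,m)=(d-2-e)q^{m-1}+(d+e)q^{m-2}+\pi_{m-2}\leq H_r(d-1,m)+\pi_{m-1},
\]
the last step using $q\geq d+e$ and $H_r(d-1,m)\geq(d-2-e)q^{m-1}$. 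The inequality you flag as the ``main obstacle,'' $H_{r-t}(d-1,m-1)+H_t(d-1,m)\leq H_r(d-1,m)+q^{m-1}$, is not tight at all: it follows immediately from the same two crude bounds ($H_{r-t}(d-1,m-1)\leq(d-1)q^{m-2}$ and $H_t(d-1,m)\leq(d-2-e)q^{m-1}+(e+1)q^{m-2}$), so the block-peeling and induction on $e$ you sketch are unnecessary. Your route also has two costs the paper's avoids: invoking $e_{r-t}(d,m-1)\leq H_{r-t}(d-1,m-1)+\pi_{m-2}$ requires carrying the dimension-$(m-1)$ inductive hypothesis into this lemma (and handling the stray case $r-t=1$, which falls outside the blocks of Theorem~\ref{Thm l=1 with e} and must be covered by Serre's bound separately), and it implicitly consumes the stronger hypothesis on $q$ from that inductive statement rather than the bare $q\geq d+e$ the lemma asserts. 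What your approach would buy, if the sharp $H$-inequality were actually needed, is a tighter estimate; here it is not needed, and the coarse bound is the whole proof.
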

\begin{proof}
Let $w_{r}(d-1,m)=(\beta_1,\dots,\beta_{m+1})$.
By Lemma~\ref{bound on betaone}, $\binom{m+e}{e}< r \leq \binom{m+e+1}{e+1}$ implies that $\beta_1=d-2-e$. By Lemma \ref{rec2}, we have
$$|V(W)(\fq)| \leq e_{r-t_W}(d,m-1)+H_{t_W}(d-1,m).$$
Since $r-t_W \geq 1$ and $t_W > \binom{m+e}{e}$, we have
\begin{align*}
    |V(W)(\fq)| &\leq e_{1}(d,m-1)+H_{\binom{m+e}{e}+1}(d-1,m)\\
    &= (d-1)q^{m-2}+\pi_{m-2}+(d-2-e)q^{m-1}+q^{m-2}(e+1) \\
    &=\beta_1q^{m-1}+(d+e)q^{m-2}+\pi_{m-2}
     \leq \beta_1q^{m-1}+\pi_{m-1} \\
    & \leq H_r(d-1,m)+\pi_{m-1}.\qedhere
\end{align*}
\end{proof}

Finally, we consider the case $\binom{m+e-1}{e-1}< t_W \leq\binom{m+e}{e}$. We will apply the following proposition of Homma and Kim.
\begin{proposition}
\cite[Theorem 1.2]{homma2013elementary}\label{no linear divisor hypersurface}
For $d\geq 2$, consider $G\in S_d(m)$, $G\neq 0$. Assume that $G$ has no linear factors. Then, we have
$$|V(G)(\fq)| \leq (d-1)q^{m-1}+dq^{m-2}+\pi_{m-3}.$$
\end{proposition}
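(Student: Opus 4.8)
The plan is to prove the bound by induction on $m$, reducing a general hypersurface to the case of plane curves, which admits a completely elementary treatment. For the \emph{base case} $m=2$ the claimed bound reads $|V(G)(\fq)|\leq (d-1)q+d$, since $\pi_{-1}=0$. If $V(G)(\fq)=\emptyset$ there is nothing to prove, so fix a point $P\in V(G)(\fq)$. Since $G$ has no linear factor, no $\fq$-line is a component of $V(G)$, so every $\fq$-line meets $V(G)$ in at most $\deg G=d$ points by B\'ezout. Every point of $\PPP^2(\fq)$ other than $P$ lies on exactly one of the $q+1$ lines through $P$, and each such line carries $P$ together with at most $d-1$ further points of $V(G)(\fq)$; hence $|V(G)(\fq)|\leq 1+(q+1)(d-1)=(d-1)q+d$, exactly as required.

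For the \emph{inductive step} ($m\geq 3$) I would pass to hyperplane sections. For an $\fq$-hyperplane $\HH=V(L)$ the no-linear-factor hypothesis forces $L\nmid G$, so $G|_{\HH}$ is a nonzero degree-$d$ form on $\HH\cong\PPP^{m-1}(\fq)$ and $|V(G)(\fq)\cap\HH|=|V(G|_{\HH})(\fq)|$. Call $\HH$ \emph{degenerate} if $G|_{\HH}$ acquires a linear factor (equivalently, $\HH$ contains an $(m-2)$-dimensional $\fq$-linear subspace lying on $V(G)$), and \emph{good} otherwise. For a good $\HH$ the induction hypothesis gives $|V(G|_{\HH})(\fq)|\leq (d-1)q^{m-2}+dq^{m-3}+\pi_{m-4}$, while for a degenerate $\HH$ one falls back on the bound of Serre and S\o rensen (the case $r=1$ of Conjecture~\ref{TBC}), namely $|V(G|_{\HH})(\fq)|\leq dq^{m-2}+\pi_{m-3}$. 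Summing over all $\pi_m$ hyperplanes and using that each point of $V(G)(\fq)$ lies on exactly $\pi_{m-1}$ of them gives $\pi_{m-1}|V(G)(\fq)|=\sum_{\HH}|V(G)(\fq)\cap\HH|$, from which one solves for $|V(G)(\fq)|$. The per-section bounds differ by $q^{m-3}(q-d+1)$, so the resulting estimate is the good bound (whose leading term is already $(d-1)q^{m-1}$) plus a correction proportional to the number of degenerate hyperplanes.

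The crux, and the step I expect to be hardest, is controlling the number of degenerate hyperplanes, equivalently the number of $\fq$-rational $(m-2)$-planes contained in $V(G)$. The hypothesis that $G$ has no linear factor is precisely what forbids an $(m-1)$-plane component and keeps this family small; each such $(m-2)$-plane lies in only $q+1$ hyperplanes, and a sufficiently sharp count of these planes is what turns the crude Serre constant $d$ into the desired $(d-1)$. Making this count sharp \emph{uniformly in $q$} (including small $q$) is the delicate point, and is exactly where the detailed analysis of Homma and Kim is needed; note also that the full plane-curve bound in its sharp Sziklai form is itself a substantial result, although the weaker $(d-1)q+d$ used in the base case above is elementary. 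As a consistency check valid when $q$ is large, one can instead argue componentwise: since $G$ has no linear factor, every irreducible component of $V(G)$ has degree at least $2$, so there are at most $\lfloor d/2\rfloor\leq d-1$ of them, and applying Corollary~\ref{Cor bound} and Corollary~\ref{cor: bound for non-irreducible} to each component produces a leading term at most $\tfrac{d}{2}q^{m-1}\leq (d-1)q^{m-1}$, in agreement with the stated estimate.
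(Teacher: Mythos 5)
The paper does not actually prove this proposition; it is imported verbatim from Homma and Kim, so there is no internal argument to measure yours against. Your base case for plane curves is correct and elementary. The difficulty is in the inductive step, and it is not only the missing count of degenerate hyperplanes that you flag: the averaging scheme itself is quantitatively too weak to close the induction, even if that count were supplied exactly. Writing $N$ for the number of degenerate hyperplanes and $T=(d-1)q^{m-1}+dq^{m-2}+\pi_{m-3}$ for the target, your double count gives
$$\pi_{m-1}\,|V(G)(\fq)|\;\le\;(\pi_m-N)\bigl((d-1)q^{m-2}+dq^{m-3}+\pi_{m-4}\bigr)+N\bigl(dq^{m-2}+\pi_{m-3}\bigr),$$
and since $\pi_m=q\pi_{m-1}+1$ one checks that the first summand with $N=0$ already equals $\pi_{m-1}(T-1)$ plus a term of size about $(d-1)q^{m-2}$; the total slack available is therefore only about $q^{m-2}(q-d+1)$, which forces $N$ to be $O(q)$ for the argument to close. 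That is false in general. Concretely, take $m=3$, $d=2$ and $G$ a smooth hyperbolic quadric: the target $q^2+2q+1$ is attained with equality, every tangent plane is degenerate (its section is a pair of lines), so $N=(q+1)^2$, while your inequality requires $N(q-1)\le q^2-1$, i.e.\ $N\le q+1$. Plugging the true $N$ into your estimate yields only $|V(G)(\fq)|\le q^2+3q$, which does not prove the proposition. The structural reason is that a one-sided average cannot exploit the cancellation between the many good sections that fall one point short of the inductive bound (smooth conic sections have $q+1$ points, not $q+2$) and the degenerate sections that exceed it.

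So the inductive step needs a genuinely different idea, not merely the missing lemma on $(m-2)$-planes; this is why the paper defers to the finer analysis of Homma and Kim rather than giving a short section-and-average argument. Your closing consistency check via Corollary~\ref{Cor bound} and Corollary~\ref{cor: bound for non-irreducible} is sound as a sanity check for large $q$, but, as you note, it cannot replace the proposition: the application in Lemma~\ref{medium t} uses this bound for all $q$ in the stated (linear-in-$d$) range, where those corollaries do not apply.
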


\begin{lemma}\label{medium t}
Suppose $m\geq 2$, $d\geq 2$, $0\leq e\leq d-2$ and $\binom{m+e}{e}< r \leq \binom{m+e+1}{e+1}$.
Let $W \subseteq S_d(m)$ be a linear subspace of dimension $r$ with $\binom{m+e-1}{e-1}< t_W \leq\binom{m+e}{e}$. Suppose that
\[q\geq \max\{d+e+\frac{e^2-1}{d-(e+1)}, d-1+e^2-e\}\]
and for each $s,d_1$ satisfying $0\leq s\leq \binom{m-1+e+1}{e+1}$ and $1\leq d_1\leq d$, we have
\[e_{s}(d_1,m-1)\leq H_{s}(d_1-1,m-1)+\pi_{m-2}.\] Then we have \[|V(W)(\fq)| \leq H_r(d-1,m)+\pi_{m-1}.\]
\end{lemma}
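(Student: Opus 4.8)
The plan is to run the induction step by splitting on the greatest common divisor $G=\gcd(W)$ of a basis of $W$. The first thing I would record is that in the medium range we have $t_W\le\binom{m+e}{e}<r$, so $t_W\neq r$; hence no linear form divides every element of $W$ (such a form $L$ would force $W\subseteq L\,S_{d-1}(m)$ and thus $t_W=r$). Consequently $G$ has no linear factor, so $\deg G$ is either $0$ or at least $2$. I would then treat the coprime case $\gcd(W)=1$ and the case $\deg G=a\ge 2$ separately, using Lemma~\ref{rec2} in the former and the Homma--Kim bound (Proposition~\ref{no linear divisor hypersurface}) in the latter.

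In the coprime case the key structural fact is that no irreducible hypersurface can sit inside every $V(F_i)$ (it would be cut out by a common irreducible factor of $W$), so $\dim V(W)\le m-2$ and the dangerous top-degree contribution is absent. When $e\ge 1$ the medium range already forces $t_W\ge 2$, so the ``moreover'' clause of Lemma~\ref{rec2} applies and gives $|V(W)(\fq)|\le e_{r-t_W}(d,m-1)+(d-1)^2q^{m-2}$; I would bound $e_{r-t_W}(d,m-1)$ by the inductive hypothesis with $d_1=d$ when $r-t_W\le\binom{m+e}{e+1}$ and by monotonicity in $s$ together with the special-value estimate of Lemma~\ref{Lem: Special value of Hrdm} otherwise, and then reduce the comparison with $H_r(d-1,m)+\pi_{m-1}$ to the shift inequality of Lemma~\ref{Hrdm Hrdm-1}. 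The point is that the leading $H_{t_W}(d-1,m)\sim(d-1-e)q^{m-1}$ has been traded for the lower-order $(d-1)^2q^{m-2}$. The endpoint $e=0$, where $t_W=1$ and the ``moreover'' clause is unavailable, I would dispatch directly from $\dim V(W)\le m-2$ via the degree/dimension bound of Proposition~\ref{Prop: Components of codim geq l}.

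In the case $\deg G=a\ge 2$ I would write $W=G\,W'$ with $W'\subseteq S_{d-a}(m)$, $\dim W'=r$ and $\gcd(W')=1$, so that $V(W)=V(G)\cup V(W')$. Since $\dim W'=r>\binom{m+e}{e}$ and $W'\subseteq S_{d-a}(m)$ we must have $d-a>e$, i.e. $2\le a\le d-e-1$. Homma and Kim's bound, applicable because $G$ has no linear factor, gives $|V(G)(\fq)|\le(a-1)q^{m-1}+aq^{m-2}+\pi_{m-3}$, while the residual $V(W')$ is cut out by degree-$(d-a)$ forms with trivial gcd; I would bound it by slicing with a hyperplane and invoking the inductive hypothesis for the degree $d_1=d-a$, which is precisely why that hypothesis is stated for all $1\le d_1\le d$. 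Adding the two estimates and using $a-1\le d-2-e=\beta_1$ (Lemma~\ref{bound on betaone}) matches the leading term against $H_r(d-1,m)+\pi_{m-1}$.

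The main obstacle is that the target inequality is genuinely tight: a product of distinct linear forms already makes $V(W)$ nearly attain the bound, so crude degree-times-$\pi_k$ estimates do not suffice and the entire difficulty lies in controlling the lower-order terms exactly. This is sharpest in the boundary subcase $a=d-e-1$, where $(a-1)q^{m-1}$ equals the leading term of the target and the comparison is decided at order $q^{m-2}$ and below: here $d-a=e+1$, the Homma--Kim remainder contributes $(d-1-e)q^{m-2}$ and $V(W')$ contributes up to order $(e+1)^2q^{m-2}$, all of which must be absorbed by $\pi_{m-1}$ together with the smaller $\beta_i$-terms of $H_r(d-1,m)$. It is exactly this balancing that forces the explicit hypotheses $q\ge d+e+\frac{e^2-1}{d-e-1}$ and $q\ge d-1+e^2-e$, and that makes the combinatorial identities for $H$ (Lemmas~\ref{Hrdm Hrdm-1} and \ref{Lem: Special value of Hrdm}) the technical heart of the argument.
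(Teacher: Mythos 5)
Your overall plan is the same as the paper's: factor out $G=\gcd(W)$, observe that $t_W\neq r$ forces $G$ to have no linear factor, apply the ``moreover'' clause of Lemma~\ref{rec2} to the coprime part, control $V(G)$ with Proposition~\ref{no linear divisor hypersurface}, invoke the inductive hypothesis at degree $d_1=d-\deg G$, and locate the sharp constants at the endpoints of the ratio $\frac{(x-1)^2+d-2-e}{x-e}$ for $x=d-\deg G\in[e+1,d]$. Your identification of where the two hypotheses on $q$ come from is accurate. However, two steps as written would fail. First, in the coprime case you propose to close the comparison with ``the shift inequality of Lemma~\ref{Hrdm Hrdm-1}''; that lemma requires $t\leq\binom{m+e-1}{e-1}$, which is exactly the inequality that fails in the medium range $t_W>\binom{m+e-1}{e-1}$. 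The correct tool is to first use monotonicity of $e_s$ in $s$ to replace $t_W$ by its maximal value $\binom{m+e}{e}$, and then apply the exact identity of Lemma~\ref{relation Hr}, namely $H_r(d-1,m)-H_{r-\binom{m+e}{e}}(d-1,m-1)=(d-2-e)q^{m-2}(q-1)$; it is this $(d-2-e)q^{m-1}$ of slack that absorbs the $(d-\deg G-1)^2q^{m-2}$ error term and produces the condition $q\geq d+e+\frac{e^2-1}{d-(e+1)}$. This same monotonicity step also removes the need for your case split on whether $r-t_W\leq\binom{m+e}{e+1}$, since $r-\binom{m+e}{e}\leq\binom{m+e}{e+1}$ always holds.

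Second, your patch for the endpoint $e=0$ (where $t_W=1$ and the ``moreover'' clause of Lemma~\ref{rec2} is formally unavailable) does not close numerically. From $\gcd(W)=1$ you correctly get $\dim V(W)\leq m-2$, but Proposition~\ref{Prop: Components of codim geq l} then only yields $|V(W)(\fq)|\leq d^2\pi_{m-2}$, whereas the target is $H_r(d-1,m)+\pi_{m-1}$, which for $e=0$ is of size $(d-1)q^{m-1}+O(q^{m-2})$. The required inequality reduces to roughly $(d+1)\pi_{m-2}\leq q^{m-1}$, i.e.\ $q\geq d+2$, while the hypothesis for $e=0$ only guarantees $q\geq d$; at $q=d$ the crude bound $d^2\pi_{m-2}$ strictly exceeds the target. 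You are right to flag this edge case (the paper applies the ``moreover'' clause of Lemma~\ref{rec2} here without verifying $t\geq 2$), but a dimension-and-degree count is too lossy to resolve it; one needs either to verify that the conclusion of Lemma~\ref{rec2} persists for $t=1$ under $\gcd(W)=1$, or a sharper substitute for Proposition~\ref{Prop: Components of codim geq l} in codimension two.
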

\begin{proof}
Denote the g.c.d. of all polynomials in $W$ as $G=\gcd(W)$. Let $c_1=\deg(G)$ and $t=t_W$. Suppose $W=GW_1$ with $W_1\subseteq S_{d-c_1}(m)$ and $\gcd(W_1)=1$. Since $t\neq r$, we know that $G$ has no linear factors and $t=t_W=t_{W_1}$.
Since $t\leq \binom{m+e}{e}$ and $\gcd(W_1)=1$, by Lemma \ref{rec2}  we know that
\begin{align*}
|V(W_1)(\fq)|&\leq e_{r-t}(d-c_1,m-1) +(d-c_1-1)^2 q^{m-2}\\
&\leq e_{r-\binom{m+e}{e}}(d-c_1,m-1) +(d-c_1-1)^2 q^{m-2}.
\end{align*}
Now $r-\binom{m+e}{e}\leq \binom{m+e+1}{e+1}-\binom{m+e}{e}=\binom{m-1+e+1}{e+1}$, so
\[e_{r-\binom{m+e}{e}}(d-c_1,m-1)\leq H_{r-\binom{m+e}{e}}(d-c_1-1,m-1)+\pi_{m-2}.\]
By Lemma \ref{relation Hr} and Lemma \ref{Lem d-c to d} this implies that
\begin{align*}
&|V(W_1)(\fq)|\leq H_{r-\binom{m+e}{e}}(d-c_1-1,m-1)+\pi_{m-2} +(d-c_1-1)^2 q^{m-2}\\
&= H_{r}(d-c_1-1,m)-(d-c_1-2-e)q^{m-2}(q-1)\\
&+\pi_{m-1}-q^{m-1} +(d-c_1-1)^2 q^{m-2}\\
&=H_{r}(d-c_1-1,m)+c_1q^{m-1}+ \pi_{m-1}-(d-1-e)q^{m-1}\\
&+((d-c_1-1)^2+d-c_1-2-e)q^{m-2}\\
&\leq H_{r}(d-1,m)+ \pi_{m-1}-(d-1-e)q^{m-1}
+((d-c_1-1)^2+d-c_1-2-e)q^{m-2}.
\end{align*}
Now if $c_1=0$, then $W=W_1$. Moreover, $d+e+\frac{e^2-1}{d-(e+1)}\leq q$ implies that
$$\big((d-1)^2 +d-2-e\big)q^{m-2}\leq (d-1-e)q^{m-1}.$$
This shows that $|V(W)(\fq)|\leq H_{r}(d-1,m)+ \pi_{m-1}$.

On the other hand, if $c_1\neq 0$, then Proposition~\ref{no linear divisor hypersurface} implies that
$$|V(G)(\fq)| \leq (c_1-1)q^{m-1}+c_1q^{m-2}+\pi_{m-3}.$$
Then we have
\begin{align*}
|V(W)(\fq)| &\leq |V(W_1)(\fq)|+ |V(G)(\fq)|\\
&\leq H_{r}(d-1,m)+ \pi_{m-1}-(d-c_1-e)q^{m-1}\\
&+((d-c_1-1)^2+d-2-e)q^{m-2} +\pi_{m-3}.
\end{align*}
Therefore, we will be done if we show that
$$\big((d-c_1-1)^2 +d-2-e\big)q^{m-2}+\pi_{m-3}\leq (d-c_1-e)q^{m-1}.$$
For this it is enough to check that $\frac{(d-c_1-1)^2 +d-2-e}{d-c_1-e}\leq q-1$. Since $\dim(W_1)=r$ and $W_1\subseteq S_{d-c_1}(m)$, we know that
$\tbinom{m+e}{e} < r\leq \tbinom{m+d-c_1}{m},$
that is $e+1\leq d-c_1$. Consider the function $f(x)=\frac{(x-1)^2+d-2-e}{x-e}$ on the interval $e+1\leq x\leq d$. It is easy to see from elementary calculus that $f$ must be maximized at an end point. Now $f(e)=d-2+e^2-e\leq q-1$ and $$f(d)=d+e-1+\frac{e^2-2e-1}{d-e}\leq d+e-1+\frac{e^2-1}{d-(e+1)}\leq q-1.$$
The result follows.
\end{proof}

\begin{customthm}{\ref{Thm l=1 with e}}
Suppose $m\geq 2$, $d\geq 2$, $0\leq e\leq d-2$ and $\binom{m+e}{e}< r \leq \binom{m+e+1}{e+1}$.
If $q\geq \max\{d+e+\frac{e^2-1}{d-(e+1)}, d-1+e^2-e\}$, then we have
$$e_r(d,m)= H_r(d-1,m)+\pi_{m-1}.$$
\end{customthm}
\begin{proof}
The case $d=1$ was proved in \cite{datta2017number}, so assume $d\geq 2$.
We induct on $m$. The base case $m=1$ is shown in \cite{datta2017number}. The induction step follows from Lemma \ref{linear factor}, Lemma \ref{small t}, Lemma \ref{large t} and Lemma \ref{medium t}.
\end{proof}

\appendix

\section{Technical Lemmas}\label{Sec: Appendix}

\subsection{Lemmas for Section \ref{sec: case when X does not contain a linear subspace}}

The following lemma is a restatement of \cite[Theorem 3.1]{beelen2019note}.
\begin{lemma}\label{Lem: r in term of Hrdm}
Suppose $H_r(d,m)=\sum_{j=1}^{d}\lfloor q^{a_j}\rfloor$ for some $-1\leq a_1\leq a_2\dots\leq a_d\leq m-1$ and $1\leq r\leq \binom{m+d}{d}$. Then (for $q>d$) we have
$$r=\tbinom{m+d}{d}-\sum_{j=1}^{d}\tbinom{a_j+j}{j}.$$    
\end{lemma}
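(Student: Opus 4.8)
The plan is to translate both sides of the asserted identity into counts of tuples in $\Omega(d,m)$ and then match these counts. Recall that $\omega_r(d,m)=(\beta_1,\dots,\beta_{m+1})$ is the $r$-th largest element of $\Omega(d,m)$ in the lexicographic order, so exactly $r$ tuples of $\Omega(d,m)$ are $\geq_{\mathrm{lex}}(\beta_1,\dots,\beta_{m+1})$, and hence exactly $\binom{m+d}{d}-r$ tuples are $<_{\mathrm{lex}}(\beta_1,\dots,\beta_{m+1})$. Thus the formula $r=\binom{m+d}{d}-\sum_{j}\binom{a_j+j}{j}$ is equivalent to the purely combinatorial identity
$$\#\{\gamma\in\Omega(d,m)\mid \gamma<_{\mathrm{lex}}(\beta_1,\dots,\beta_{m+1})\}=\sum_{j=1}^d\binom{a_j+j}{j}.$$

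Before proving this, I would first argue that the representation $H_r(d,m)=\sum_{j=1}^d\lfloor q^{a_j}\rfloor$ is \emph{forced}, so that the right-hand side is well defined and does not depend on a choice. Since $H_r(d,m)=\sum_{i=1}^m\beta_iq^{m-i}$ and every $\beta_i\leq d<q$, this is precisely the base-$q$ expansion of $H_r(d,m)$, with digit $\beta_{m-e}$ in position $e$. On the other hand, writing $c_e=\#\{j\mid a_j=e\}$ and noting that the terms with $a_j=-1$ vanish (as $q\geq 2$), we have $\sum_j\lfloor q^{a_j}\rfloor=\sum_{e=0}^{m-1}c_eq^e$ with each $c_e\leq d<q$. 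By uniqueness of the base-$q$ expansion, $c_e=\beta_{m-e}$ for $0\leq e\leq m-1$, and then the number of indices with $a_j=-1$ equals $d-\sum_{i=1}^m\beta_i=\beta_{m+1}$. Hence the sorted multiset $(a_j)$ is determined by $(\beta_1,\dots,\beta_{m+1})$: the value $m-i$ occurs exactly $\beta_i$ times.

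Finally I would compute both sides and verify they agree block by block. Set $s_i=\beta_1+\dots+\beta_i$ with $s_0=0$. Counting the tuples $\gamma<_{\mathrm{lex}}(\beta_1,\dots,\beta_{m+1})$ by their first index $i$ of disagreement (so $\gamma_1=\beta_1,\dots,\gamma_{i-1}=\beta_{i-1}$, $\gamma_i=v<\beta_i$, and the remaining $m+1-i$ coordinates free) gives a contribution $\sum_{v=0}^{\beta_i-1}\binom{d-s_{i-1}-v+m-i}{m-i}$, which by the hockey-stick identity equals $\sum_{w=d-s_i+m-i+1}^{\,d-s_{i-1}+m-i}\binom{w}{m-i}$. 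On the other side, the positions $j$ with $a_j=m-i$ are exactly those with $d-s_i+1\leq j\leq d-s_{i-1}$, and the corresponding block of $\sum_j\binom{a_j+j}{j}=\sum_j\binom{(m-i)+j}{m-i}$ is the \emph{same} telescoping sum $\sum_{w=d-s_i+m-i+1}^{\,d-s_{i-1}+m-i}\binom{w}{m-i}$. The two contributions therefore coincide for each $i$, and the block $i=m+1$ (exponent $-1$) contributes $0$ on both sides; summing over $i$ yields the identity.

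The only genuinely delicate points are establishing that $q>d$ forces the representation (the base-$q$ uniqueness step, without which the right-hand side could be ambiguous) and keeping the binomial-coefficient conventions straight for the boundary terms $a_j=-1$ and $i=m+1$, where the arguments become negative and the terms must be read as $0$. The core telescoping is routine once the partial sums $s_i$ are introduced, so I expect the bookkeeping with $s_i$ and the off-by-one ranges to be the main source of potential error rather than any conceptual obstacle.
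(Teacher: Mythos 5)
Your proof is correct. Note that the paper itself does not prove this lemma at all: it is stated as a restatement of an external result and the ``proof'' is simply a citation to Theorem 3.1 of Beelen--Datta--Ghorpade's note, so your argument is a genuinely self-contained alternative rather than a reproduction of the paper's reasoning. The two pillars of your argument both check out: (i) the uniqueness step is exactly where the hypothesis $q>d$ enters, since $\beta_i\leq d<q$ and $c_e=\#\{j\mid a_j=e\}\leq d<q$ make both expressions the base-$q$ expansion of $H_r(d,m)$, which pins down the multiset $\{a_j\}$ as ``$m-i$ with multiplicity $\beta_i$, and $-1$ with multiplicity $\beta_{m+1}$''; and (ii) the block-by-block count of $\{\gamma<_{\mathrm{lex}}\omega_r(d,m)\}$ by first index of disagreement matches $\sum_j\binom{a_j+j}{j}$ term for term, with the boundary conventions $\binom{j-1}{j}=0$ and the empty $i=m+1$ block handled correctly (I verified the formula on several small cases, e.g.\ $d=m=2$, and it agrees). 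One cosmetic quibble: the step you attribute to the hockey-stick identity is really just the substitution $w=d-s_{i-1}-v+m-i$; no binomial identity is needed there. It would also be worth stating explicitly that the positions $j$ with $a_j=m-i$ are $d-s_i+1\leq j\leq d-s_{i-1}$ precisely because the $a_j$ are sorted increasingly, which is the hypothesis $a_1\leq\cdots\leq a_d$ of the lemma; this is where the indexing by $j$ in $\binom{a_j+j}{j}$ becomes rigid rather than a free labelling of a multiset.
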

\begin{proof}
See \cite[Theorem 3.1]{beelen2019note}.
\end{proof}

We use this to derive a lemma about special values of $H_r(d,m)$.

\begin{lemma}\label{Lem: Special value of Hrdm}
For $0\leq e\leq d$, we have $H_{\binom{m+e}{e}}(d,m)=(d-e)q^{m-1}$.
\end{lemma}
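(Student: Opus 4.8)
The plan is to compute $H_{\binom{m+e}{e}}(d,m)$ directly from the definition, by identifying the lexicographically $\binom{m+e}{e}$-th largest element of $\Omega(d,m)$ and reading off its first $m$ coordinates. Recall that $H_r(d,m)=\sum_{i=1}^m \beta_i q^{m-i}$, where $\omega_r(d,m)=(\beta_1,\dots,\beta_{m+1})$ is the $r$-th largest element of $\Omega(d,m)$. Everything thus reduces to pinning down this one tuple, and in particular no condition on $q$ should be needed.

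First I would sort $\Omega(d,m)$ by the value of the first coordinate $\gamma_1$ in decreasing order, since lexicographic comparison examines $\gamma_1$ first. For a fixed value $\gamma_1=d-k$ with $0\leq k\leq d$, the remaining coordinates $(\gamma_2,\dots,\gamma_{m+1})$ range over all of $\mathbb{N}^m$ with sum $k$, and there are exactly $\binom{m-1+k}{m-1}$ such tuples. Summing over $k$ via the hockey-stick identity,
$$\sum_{k=0}^{e}\binom{m-1+k}{m-1}=\binom{m+e}{m}=\binom{m+e}{e},$$
so the tuples with $\gamma_1\geq d-e$ are precisely the first $\binom{m+e}{e}$ elements in the ordering. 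Hence $\omega_{\binom{m+e}{e}}(d,m)$ is the lexicographically smallest tuple having $\gamma_1=d-e$.

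Next I would observe that among tuples with $\gamma_1=d-e$, the lexicographically smallest one pushes all the remaining weight $e$ into the final coordinate, i.e. it is $(d-e,0,\dots,0,e)$, so $\beta_1=d-e$ and $\beta_2=\dots=\beta_m=0$ (only $\beta_{m+1}=e$ is nonzero, and this coordinate does not appear in $H$). Plugging into the definition gives $H_{\binom{m+e}{e}}(d,m)=\beta_1 q^{m-1}=(d-e)q^{m-1}$, as claimed. The two boundary cases $e=0$ (tuple $(d,0,\dots,0)$, value $dq^{m-1}$) and $e=d$ (tuple $(0,\dots,0,d)$, value $0$) agree with the formula and serve as sanity checks.

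There is essentially no hard obstacle here; the only care needed is in the combinatorial bookkeeping, namely correctly converting the lex order into an ordering-by-$\gamma_1$ and applying the hockey-stick identity. Alternatively, one could avoid the direct count by invoking Lemma~\ref{Lem: r in term of Hrdm}: writing $(d-e)q^{m-1}=\sum_{j=1}^d \lfloor q^{a_j}\rfloor$ with $a_1=\dots=a_e=-1$ and $a_{e+1}=\dots=a_d=m-1$ (using $\lfloor q^{-1}\rfloor=0$), the stated formula gives $r=\binom{m+d}{d}-\sum_{j=e+1}^d\binom{m-1+j}{j}=\binom{m+e}{e}$ after one further hockey-stick computation. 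This route, however, requires $q>d$, whereas the direct argument does not.
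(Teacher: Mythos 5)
Your direct argument is correct, and it takes a genuinely different route from the paper. The paper proves this in one line by invoking Lemma~\ref{Lem: r in term of Hrdm} (the restatement of Beelen's formula $r=\binom{m+d}{d}-\sum_{j=1}^d\binom{a_j+j}{j}$ applied to the exponent multiset $a_1=\dots=a_e=-1$, $a_{e+1}=\dots=a_d=m-1$), which is precisely the alternative you sketch at the end; the computation $\binom{m+d}{d}-\sum_{j=e+1}^{d}\binom{m-1+j}{j}=\binom{m+e}{e}$ is the same hockey-stick identity you use. Your primary argument instead identifies $\omega_{\binom{m+e}{e}}(d,m)$ explicitly: counting the tuples with $\gamma_1\geq d-e$ gives exactly $\binom{m+e}{e}$, so the tuple in question is the lex-smallest with $\gamma_1=d-e$, namely $(d-e,0,\dots,0,e)$, and the formula follows from the definition of $H_r$. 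Both arguments are sound, but yours is more elementary and, as you note, dispenses with the hypothesis $q>d$ that Lemma~\ref{Lem: r in term of Hrdm} carries; this matches the fact that the lemma as stated carries no condition on $q$, whereas the paper's one-line proof technically inherits one. The two sanity checks at $e=0$ and $e=d$ are a nice touch and both verify correctly.
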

\begin{proof}
By Lemma \ref{Lem: r in term of Hrdm}, $H_r(d,m)=(d-e)q^{m-1}$ for
\[r =\tbinom{m+d}{d}-\sum_{j=1}^{e} \tbinom{-1+j}{j} -\sum_{j=e+1}^{d} \tbinom{m-1+j}{j} =\tbinom{m+e}{e}.\qedhere\]
\end{proof}

\begin{proof}[Proof of Lemma \ref{Lem starts with c q m-l}]
Let $r_1=r-\binom{m+d}{d}+\binom{m+d+1-l}{d}$, so
$0<r_1\leq \tbinom{m-l+1+d-c}{d-c}$.
Lemma~\ref{Lem: Special value of Hrdm} says that
\[H_{\binom{m-l+1+d-c}{d-c}}(d-1,m-l+1)=((d-1)-(d-c))q^{m-l+1-1}=(c-1)q^{m-l}.\]
This implies that $(c-1)q^{m-l}\leq H_{r_1}(d-1,m-l+1)$. Since $q^{m-l}<\pi_{m-l}$, we conclude that $c q^{m-l}< H_{r_1}(d-1,m-l+1)+\pi_{m-l}$.
\end{proof}

\subsection{Lemmas for Section \ref{sec: if X contains a linear subspace}}


\begin{lemma}\label{r as function of alpha}
If $w_r(d,m)=(a_1, \dots, a_{m+1})$, then
\begin{align*}
    r&=1+\sum_{k=1}^m \tbinom{m-k+d-\sum_{j=1}^k a_j}{m-k+1}.
\end{align*}
\end{lemma}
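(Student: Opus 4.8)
The plan is to interpret $r$ combinatorially and then reduce the identity to an elementary stars-and-bars count. By definition $w_r(d,m)=(a_1,\dots,a_{m+1})$ is the $r$-th largest element of $\Omega(d,m)$ in the lexicographic order, so $r$ is exactly the number of tuples of $\Omega(d,m)$ that are lexicographically greater than or equal to $(a_1,\dots,a_{m+1})$:
$$r=\#\{(\gamma_1,\dots,\gamma_{m+1})\in\Omega(d,m)\mid (\gamma_1,\dots,\gamma_{m+1})\geq_{\mathrm{lex}}(a_1,\dots,a_{m+1})\}.$$
First I would peel off the tuple $(a_1,\dots,a_{m+1})$ itself, which accounts for the leading $1$, and count the strictly larger tuples.

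To count the strictly larger tuples I would stratify them by the first coordinate at which they deviate upward. Every $\gamma>_{\mathrm{lex}}(a_1,\dots,a_{m+1})$ has a unique smallest index $k$ with $\gamma_j=a_j$ for $j<k$ and $\gamma_k>a_k$. Writing $s_k=\sum_{j=1}^k a_j$, the constraint $\sum_i\gamma_i=d$ then forces $\gamma_k+\gamma_{k+1}+\dots+\gamma_{m+1}=d-s_{k-1}$ with $\gamma_k\geq a_k+1$ and $\gamma_{k+1},\dots,\gamma_{m+1}\geq 0$ otherwise unconstrained. Substituting $\gamma_k\mapsto\gamma_k-a_k-1$ turns this into the count of nonnegative integer solutions of an equation in $m-k+2$ variables whose sum is $d-s_k-1$, which by stars and bars equals
$$\binom{(d-s_k-1)+(m-k+2)-1}{(m-k+2)-1}=\binom{m-k+d-s_k}{m-k+1}.$$

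Finally I would sum over the admissible values of $k$. The largest relevant index is $k=m$: at $k=m+1$ the sum condition already pins $\gamma_{m+1}=d-s_m=a_{m+1}$, so no strictly larger tuple can first deviate in the last coordinate and that stratum is empty. Hence
$$r=1+\sum_{k=1}^m\binom{m-k+d-s_k}{m-k+1},$$
which is exactly the claimed identity once $s_k$ is expanded as $\sum_{j=1}^k a_j$.

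I do not expect a genuine obstacle here: the argument is an elementary occupancy count. The only points requiring care are the bookkeeping of the number of free variables ($m-k+2$) and the range of summation ($k$ up to $m$, not $m+1$), together with checking that the binomial coefficients behave correctly at the boundary, where they simply vanish when $d-s_k<m-k+1$. As a consistency check I would verify the two extreme tuples: $(d,0,\dots,0)$ gives $s_k=d$ for every $k$, so each summand vanishes and $r=1$; while $(0,\dots,0,d)$ gives $s_k=0$, so the sum telescopes via the hockey-stick identity to $\binom{m+d}{d}-1$, yielding $r=\binom{m+d}{d}$, as it must.
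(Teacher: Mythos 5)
Your proof is correct and follows essentially the same route as the paper's: both decompose the set of tuples lexicographically greater than $w_r(d,m)$ according to the first coordinate of deviation and count each stratum $S_k$ by stars and bars, obtaining $\tbinom{m-k+d-\sum_{j=1}^k a_j}{m-k+1}$. Your added boundary checks (the empty $k=m+1$ stratum and the two extreme tuples) are consistent with, and slightly more explicit than, the paper's argument.
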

\begin{proof}
Let $S$ denote $\{w \in \Omega(d,m): w >_{\text{lex}} w_r(d,m)\}$. We know $r=1+|S|$. Note that $S=\bigsqcup_{k=1}^{m} S_k$, where
$$S_k=\{(b_1, \dots, b_{m+1}) \in \Omega(d,m): b_i=a_i \text{ for } i \leq k-1, b_k \geq a_{k}+1\}.$$
Now,
\[|S_k|=|\Omega(d-1-\sum_{j=1}^k a_k, m-(k-1))|=\tbinom{m-k+d-\sum_{j=1}^k a_j}{m-k+1}.\] Therefore, 
\[    r=1+\Big|\bigsqcup_{k=1}^m S_k\Big| =1+\sum_{k=1}^m \tbinom{m-k+d-\sum_{j=1}^ka_j}{m-k+1}. \qedhere\]
\end{proof}

\begin{lemma}
For any non-negative integers $a, b, m, n$ with $n\geq 1$, we have
\[\tbinom{m-a}{n}+ \tbinom{m-b}{n} \leq \tbinom{m-a-b}{n}+ \tbinom{m}{n}.\]
\end{lemma}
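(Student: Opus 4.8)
The plan is to interpret every binomial coefficient in the combinatorial sense, namely $\binom{y}{k}=0$ whenever $y<k$ (in particular whenever $y<0$) and $\binom{y}{k}$ equal to its usual value when $y\geq k\geq 0$. This convention is essentially forced here: under the polynomial convention the inequality already fails for $n\geq 3$ and sufficiently negative top arguments (e.g. $m=0$, $a=b=3$, $n=3$). The two facts I will need are that, for every \emph{integer} $y$ and every $k\geq 0$, (i) Pascal's rule $\binom{y}{k}-\binom{y-1}{k}=\binom{y-1}{k-1}$ holds, and (ii) $\binom{y}{k}$ is non-decreasing in $y$. Both follow from a short case check at the boundary values $y<k$, and, crucially, both hold for \emph{all} integers $y$; this is what lets the argument run even when $m-a-b$ is negative.

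With this in hand, I would rewrite the claim as the statement that the first difference $y\mapsto \binom{y}{n}-\binom{y-a}{n}$ is at least as large at $y=m$ as at $y=m-b$. To prove this I would telescope Pascal's rule $a$ times to obtain, for any integer $z$,
\[\binom{z}{n}-\binom{z-a}{n}=\sum_{i=0}^{a-1}\binom{z-1-i}{n-1}.\]
Applying this at $z=m$ and at $z=m-b$ and subtracting gives
\[\Big(\tbinom{m}{n}+\tbinom{m-a-b}{n}\Big)-\Big(\tbinom{m-a}{n}+\tbinom{m-b}{n}\Big)=\sum_{i=0}^{a-1}\Big(\tbinom{m-1-i}{n-1}-\tbinom{m-b-1-i}{n-1}\Big).\]
Since $b\geq 0$ we have $m-1-i\geq m-b-1-i$ for each $i$, so every summand is non-negative by the monotonicity (ii). Hence the whole sum is $\geq 0$, which is precisely the desired inequality. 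The cases $a=0$ or $b=0$ give equality, and the statement is symmetric in $a$ and $b$, so it is irrelevant whether one telescopes in $a$ (as above) or in $b$.

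I expect the only genuine obstacle to be the bookkeeping around the convention: one must verify that Pascal's rule and the monotonicity statement remain valid for all integer arguments, including negative ones and ones strictly below $n$, because the telescoping passes through such values whenever $a$ or $b$ is large relative to $m$. Once those two boundary facts are pinned down, the argument reduces to a single telescoping identity followed by a term-by-term comparison, so no serious computation is involved.
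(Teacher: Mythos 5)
Your proof is correct and is essentially the paper's own argument: the paper uses the hockey-stick identity $\binom{N}{k}=\sum_{s=k-1}^{N-1}\binom{s}{k-1}$ to write $\binom{m}{n}-\binom{m-b}{n}$ and $\binom{m-a}{n}-\binom{m-a-b}{n}$ as sums of $b$ terms and compares them termwise, which is exactly your telescoped Pascal's rule applied in the variable $b$ instead of $a$ (immaterial by symmetry). Your explicit verification that Pascal's rule and monotonicity persist under the convention $\binom{y}{k}=0$ for $y<k$ is a point the paper leaves implicit, and is worth having.
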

\begin{proof}
We have the identity $\binom{n}{k}=\sum_{s=k-1}^{n-1} \binom{s}{k-1}$. Therefore,
\begin{align*}
    \tbinom{m}{n}-\tbinom{m-b}{n} &=\sum_{s=n-1}^{m-1}\tbinom{s}{n-1}-\sum_{s=n-1}^{m-b-1}\tbinom{s}{n-1} 
    =\sum_{s=m-b}^{m-1}\tbinom{s}{n-1}=\sum_{t=0}^{b-1}\tbinom{t+m-b}{n-1}.\\
\end{align*}
Similarly, $\binom{m-a}{n}-\binom{m-a-b}{n}=\sum_{t=0}^{b-1}\binom{t+m-a-b}{n-1}$. It follows that
\begin{align*}
\tbinom{m-a}{n}-\tbinom{m-a-b}{n}
&=\sum_{t=0}^{b-1}\tbinom{t+m-a-b}{n-1} 
\leq \sum_{t=0}^{b-1}\tbinom{t+m-b}{n-1} =\tbinom{m}{n}-\tbinom{m-b}{n}.\qedhere  
\end{align*}
\end{proof}

\begin{lemma}\label{Lem: Sum of Hsk(d,m)}
Suppose that we have $1\leq s_1,\dots,s_l\leq \binom{m+d}{d}$ with $\sum_{k=1}^l s_k> (l-1)\binom{m+d}{d}$. Let $r=\sum_{k=1}^l s_k -(l-1)\binom{m+d}{d}$. If $q\geq d+1$, then we have
$$\sum_{k=1}^{l}H_{s_k}(d,m)\leq H_{r}(d,m).$$    
\end{lemma}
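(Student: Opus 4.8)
The plan is to reduce first to the two–term case $l=2$, then to recast the statement as a superadditivity property of $H$ through Lemma~\ref{Lem: r in term of Hrdm}, and finally to prove that superadditivity by induction on $m$ using the recursion that peels off the leading coordinate of $\omega_s(d,m)$, with the convexity lemma as the combinatorial engine.

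\textbf{Reduction to $l=2$.} Write $N=\binom{m+d}{d}$ and set $\delta_k=N-s_k\ge 0$. The hypothesis $\sum_k s_k>(l-1)N$ reads $\sum_k\delta_k<N$, and $r=N-\sum_k\delta_k$. For any two indices, replacing $s_1,s_2$ by $s_1+s_2-N$ produces a term of deficiency $\delta_1+\delta_2\le\sum_k\delta_k<N$, hence lying in $\{1,\dots,N\}$; the shortened family of $l-1$ terms has the same value of $r$ and still satisfies every hypothesis. Thus, by induction on $l$, it suffices to prove $H_{s_1}(d,m)+H_{s_2}(d,m)\le H_{s_1+s_2-N}(d,m)$ whenever $s_1+s_2>N$.

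\textbf{Translation.} By Lemma~\ref{Lem: r in term of Hrdm}, writing $H_s(d,m)=\sum_{j=1}^d\lfloor q^{a_j}\rfloor$ with $-1\le a_1\le\cdots\le a_d\le m-1$ gives $N-s=\sum_{j=1}^d\binom{a_j+j}{j}$. Since $c_j:=a_j+j$ is strictly increasing and nonnegative, this is precisely the combinatorial number system, so $s\mapsto\{a_j\}$ is an order–reversing bijection; moreover, as $q\ge d+1$, the $\beta_i$ are the base-$q$ digits of $H_s(d,m)$, so $H_s(d,m)$ is strictly decreasing in $s$. Hence $H_s(d,m)$ equals the value $V(\delta):=\sum_{j:\,a_j\ge 0}q^{a_j}$ of the unique length-$d$ exponent multiset with $\sum_j\binom{a_j+j}{j}=\delta=N-s$, and $V$ is strictly increasing in $\delta$. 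The two–term case now becomes the superadditivity statement $V(\delta_1+\delta_2)\ge V(\delta_1)+V(\delta_2)$ for $\delta_1+\delta_2\le N-1$.

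\textbf{Superadditivity.} The leading coordinate of $\omega_s(d,m)$ equals $d-e$, where $e$ is fixed by $\binom{m+e-1}{e-1}<s\le\binom{m+e}{e}$, and peeling it off yields the recursion
\[H_s(d,m)=(d-e)q^{m-1}+H_{\,s-\binom{m+e-1}{e-1}}(e,m-1),\]
whose tail lies in $[0,eq^{m-2}]\subseteq[0,q^{m-1})$ exactly because $q\ge d+1$; thus the leading $q^{m-1}$–coefficients add without any carry from the tails. The convexity lemma, used in the form $\binom{m+e_1}{e_1}+\binom{m+e_2}{e_2}\le 1+\binom{m+e_1+e_2}{e_1+e_2}$ (take $M=m+e_1+e_2$, $n=m$), converts $s_1+s_2\ge N+1$ into $e_1+e_2\ge d$ for the two block indices, which pins the leading coefficient of the target; the same lemma applied rank by rank to the terms $\binom{a_j+j}{j}$ controls the tails. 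Feeding these into the recursion and inducting on $m$ is intended to close the argument.

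\textbf{Main obstacle.} The difficulty lies entirely in the last step. Peeling off the leading coordinate drops the top degree from $d$ to the block index $e$, and since $e_1\ne e_2$ in general the two tails live over \emph{different} top degrees $e_1,e_2$; comparing their sum to the target tail forces one to strengthen the inductive hypothesis so that the top degree is allowed to vary among the summands (equivalently, to prove the superadditivity simultaneously for the whole family $H_\bullet(\cdot,m)$). Checking that the convexity lemma supplies exactly the inequalities needed at every rank, and that $q\ge d+1$ is precisely what forbids a carry out of the tail into the leading coordinate, is the crux. That the threshold is sharp is already visible for $d=2$: taking $s_1=s_2$ with $\delta_1=\delta_2=2$ requires $V(4)=q+1\ge V(2)+V(2)=4$, i.e.\ $q\ge 3=d+1$.
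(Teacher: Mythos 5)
Your reduction to $l=2$ and the translation via Lemma~\ref{Lem: r in term of Hrdm} into the statement $\sum_j\binom{a_j+j}{j}+\sum_j\binom{b_j+j}{j}=\sum_j\binom{c_j+j}{j}\implies\sum_j\lfloor q^{a_j}\rfloor+\sum_j\lfloor q^{b_j}\rfloor\le\sum_j\lfloor q^{c_j}\rfloor$ coincide exactly with the opening of the paper's proof. But the proof stops there: the entire content of the lemma is the superadditivity step, and you explicitly defer it (``is intended to close the argument'', ``is the crux''). The sketch you do give has concrete problems. First, the no-carry claim is unjustified: each tail individually lies in $[0,e_kq^{m-2}]\subseteq[0,q^{m-1})$, but the \emph{sum} of the two tails can reach $(e_1+e_2)q^{m-2}\le 2dq^{m-2}$, which exceeds $q^{m-1}$ whenever $q<2d$ --- and $q\ge d+1$ permits that. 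Second, applying the convexity lemma ``rank by rank'' presupposes an alignment between the exponent sequences $(a_j)$, $(b_j)$, $(c_j)$ that does not exist in general; this is precisely the obstacle you name without resolving.

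The paper closes the gap by a different induction: on the degree $d$, not on $m$. For $l=2$ at degree $d$ it compares the top exponents, setting $c=c_d$ and assuming $b_d\le a_d$. A counting argument rules out $a_d>c$. In the key case $a_d=c$, cancelling the common top term $\lfloor q^c\rfloor$ leaves a \emph{three}-term inequality at degree $d-1$ (the terms $\sum_{j<d}\lfloor q^{a_j}\rfloor$, $\sum_{j<d}\lfloor q^{b_j}\rfloor$, and $\lfloor q^{b_d}\rfloor$), over an auxiliary large ambient dimension $m_1$; this is why the paper proves ``$l=2$ at degree $d$ implies all $l$ at degree $d$'' \emph{before} the induction on $d$, so that the inductive hypothesis is available for $l=3$. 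The remaining cases $a_d\le c-1$ are settled by direct digit estimates, including a careful count of how many exponents equal $c-1$ to control exactly the carry phenomenon your sketch waves away. If you want to salvage your approach, you would at minimum need to strengthen the inductive statement to allow arbitrarily many summands of varying top degree and to handle carries explicitly; as written, the argument is not a proof.
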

\begin{proof}
First we claim that for a given $d$, if we prove the result for $l=2$, then we will automatically have it for all $l$ (and the given $d$). We show this by induction on $l$. Suppose that the result is known for $l-1$. Since $s_l\leq \binom{m+d}{d}$, we have
$$\sum_{i=1}^{l-1}s_i > (l-1)\tbinom{m+d}{d}-\tbinom{m+d}{d} =(l-2)\tbinom{m+d}{d}.$$
Let $r_1=\sum_{k=1}^{l-1}s_k -(l-2)\binom{m+d}{d}$ and $r=\sum_{k=1}^{l}s_k -(l-1)\binom{m+d}{d}$. By induction hypothesis we have $\sum_{k=1}^{l-1}H_{s_k}(d,m)\leq H_{r_1}(d,m)$. Finally $r_1+s_l= r+\binom{m+d}{d}>\binom{m+d}{d}$, so by using the result for $l=2$ we see that
$$\sum_{k=1}^{l}H_{s_k}(d,m)\leq H_{r_1}(d,m)+H_{s_k}(d,m)\leq H_{r}(d,m).$$
Thus, if we prove the result for a given $d$ with $l=2$, then we prove it for all $l$ with that given $d$.

Now, consider the case $l=2$. Write $H_{s_1}(d,m)=\sum_{j=1}^{d}\lfloor q^{a_j}\rfloor$, $H_{s_2}(d,m)=\sum_{j=1}^{d}\lfloor q^{b_j}\rfloor$ and $H_{r}(d,m)=\sum_{j=1}^{d}\lfloor q^{c_j}\rfloor$ with $-1\leq a_1\leq \dots\leq a_d\leq m-1$, $-1\leq b_1\leq \dots\leq b_d\leq m-1$, $-1\leq c_1\leq \dots\leq c_d\leq m-1$ and $r=s_1+s_2-\binom{m+d}{d}$. So by Lemma \ref{Lem: r in term of Hrdm} we have
$s_1=\binom{m+d}{d}-\sum_{j=1}^{d}\binom{a_j+j}{j}$, $s_2=\binom{m+d}{d}-\sum_{j=1}^{d}\binom{b_j+j}{j}$ and $r=\binom{m+d}{d}-\sum_{j=1}^{d}\binom{c_j+j}{j}$.
Thus
$$\sum_{j=1}^{d}\tbinom{a_j+j}{j}+ \sum_{j=1}^{d}\tbinom{b_j+j}{j}=\sum_{j=1}^{d}\tbinom{c_j+j}{j}.$$
We want to show that
$$\sum_{j=1}^{d}\lfloor q^{a_j}\rfloor +\sum_{j=1}^{d}\lfloor q^{b_j}\rfloor \leq \sum_{j=1}^{d}\lfloor q^{c_j}\rfloor.$$

We will prove the result by induction on $d$. The base case is $d=1$. This means $(a_1+1)+(b_1+1)=c_1+1$. Since $q^{a_1}+q^{b_1}\leq q^{a_1+b_1+1}$, the result holds for $d=1$ with any $m$.

Next, for the induction step, consider some $d\geq 2$ and assume that the result has been shown for smaller values of $d$ with any $m$. Without loss of generality assume that $b_d\leq a_d$ and denote $c_d=c$.
\begin{itemize}
    \item Case 1: $a_d\geq c+1$. Then we have
    \begin{align*}
    \tbinom{c+1+d}{d}&\leq \tbinom{a_d+d}{d}\leq \sum_{j=1}^{d}\tbinom{a_j+j}{j}+ \sum_{j=1}^{d}\tbinom{b_j+j}{j}
    =\sum_{j=1}^{d}\tbinom{c_j+j}{j}
    \leq \sum_{j=1}^{d}\tbinom{c+j}{j} =\tbinom{c+d+1}{d}-1.
    \end{align*}
    This is a contradiction, therefore $a_d\leq c$.
    \item Case 2: $a_d=c$. Then
    \begin{align*}
    \sum_{j=1}^{d-1}\tbinom{c_j+j}{j}
    &=\tbinom{b_d+d}{d}+ \sum_{j=1}^{d-1}\tbinom{a_j+j}{j}+ \sum_{j=1}^{d-1}\tbinom{b_j+j}{j}
    \geq \tbinom{b_d+d-1}{d-1}+ \sum_{j=1}^{d-1}\tbinom{a_j+j}{j}+ \sum_{j=1}^{d-1}\tbinom{b_j+j}{j}.
    \end{align*}
    Choose a sufficiently large $m_1$ such that $\sum_{j=1}^{d-1}\binom{c_j+j}{j}< \binom{m_1+d-1}{d-1}$.
    Let $s_1'=\binom{m_1+d-1}{d-1}-\sum_{j=1}^{d-1}\binom{a_j+j}{j}$, $s_2'=\binom{m_1+d-1}{d-1}-\sum_{j=1}^{d-1}\binom{b_j+j}{j}$, $s_3'=\binom{m_1+d-1}{d-1}-\binom{b_d+d-1}{d-1}$.
    So we have $1\leq s_1',s_2',s_3'\leq \binom{m_1+d-1}{d-1}$ and
    \begin{align*}
    s_1'+s_2'+s_3'&= 3\tbinom{m_1+d-1}{d-1}- \sum_{j=1}^{d-1}\tbinom{a_j+j}{j} -\sum_{j=1}^{d-1}\tbinom{b_j+j}{j} -\tbinom{b_d+d-1}{d-1}\\
    &\geq 3\tbinom{m_1+d-1}{d-1}- \sum_{j=1}^{d-1}\tbinom{c_j+j}{j}
    >2 \tbinom{m_1+d-1}{d-1}.
    \end{align*}
    Let $r_1=s_1'+s_2'+s_3'-2\binom{m_1+d-1}{d-1}$ and $r_2= \binom{m_1+d-1}{d-1}-\sum_{j=1}^{d-1}\binom{c_j+j}{j}$, so $r_1\geq r_2>0$. Therefore, by the inductive hypothesis hypothesis we have
    $$H_{s_1'}(d-1,m_1)+H_{s_2'}(d-1,m_1)+H_{s_3'}(d-1,m_1)\leq H_{r_1}(d-1,m_1)\leq H_{r_2}(d-1,m_1).$$
    Next, by Lemma \ref{Lem: r in term of Hrdm} we have
    \begin{align*}
    H_{s_1'}(d-1,m_1) &=\sum_{j=1}^{d-1}\lfloor q^{a_j}\rfloor =H_{s_1}(d,m)- \lfloor q^c\rfloor,\\
    H_{s_2'}(d-1,m_1) &=\sum_{j=1}^{d-1}\lfloor q^{b_j}\rfloor =H_{s_2}(d,m)- \lfloor q^{b_d}\rfloor,\\
    H_{s_3'}(d-1,m_1) &= \lfloor q^{b_d}\rfloor,\\
    H_{r_2}(d-1,m_1) &=\sum_{j=1}^{d-1}\lfloor q^{c_j}\rfloor =H_{r}(d,m)- \lfloor q^c\rfloor.
    \end{align*}
    Therefore, we conclude that $H_{s_1}(d,m)+H_{s_2}(d,m)\leq H_{r}(d,m)$ as required.
    \item Case 3: $a_d\leq c-1$ and $c=0$. Then all $a_i,b_i$ are $-1$, which implies $H_{s_1}(d,m)= H_{s_2}(d,m)=0$, so we are done.
    \item Case 4: $a_d\leq c-1$, $c\geq 1$ and $c_{d-1}=c$. We have
    \begin{align*}
     H_{s_1}(d,m)+ H_{s_2}(d,m)
     &= \sum_{j=1}^{d}\lfloor q^{a_j}\rfloor +\sum_{j=1}^{d}\lfloor q^{b_j}\rfloor
     \leq 2d q^{c-1} <2q^c\leq H_{r}(d,m).
    \end{align*}
    \item Case 5: $a_d\leq c-1$, $c\geq 1$ and $c_{d-1}\leq c-1$. Suppose $c-1$ occurs $k_1$ times in the list $a_1,\dots,a_d$, $k_2$ times in $b_1,\dots,b_d$ and $k_3$ times in $c_1,\dots,c_{d-1}$. Therefore, $H_{s_1}(d,m)+H_{s_2}(d,m)\leq (k_1+k_2) q^{c-1}+ (2d-k_1-k_2)q^{c-2}$ and $H_r(d,m)\geq q^c+ k_3 q^{c-1}$. So it is enough to show that
    $$(k_1+k_2-k_3)+\frac{2d-k_1-k_2}{q}\leq q.$$
    Case 5A: $k_1+k_2\leq d-2$. Then we have
    $$(k_1+k_2-k_3)+\frac{2d-k_1-k_2}{q}\leq d-2+\frac{2d}{q}<(d-2)+2<q.$$
    Case 5B: $k_1+k_2\in \{d-1,d\}$. In this case $2d-k_1-k_2\leq d+1\leq q$. Therefore,
    $$(k_1+k_2-k_3)+\frac{2d-k_1-k_2}{q}\leq (d)+1\leq q.$$
    Case 5C: $k_1+k_2\geq d+1$. In this case we have $2d-k_1-k_2\leq d-1< q$, so $\frac{2d-k_1-k_2}{q}<1$. We want to show that $k_3\geq k_1+k_2-d$. Assume for the sake of contradiction that $k_3\leq k_1+k_2-d-1$. Now $c_d=c$, $c_{d-1}=\dots=c_{d-k_3}=c-1$ and $c_{d-k_3-1}\leq c-2$. Moreover, $d-k_3-1\geq 2d-k_1-k_2$. This means that
    \begin{align*}
    &\tbinom{c+d}{d} + \tbinom{c+d-1}{c}-\tbinom{c+2d-k_1-k_2}{c}+\tbinom{c+2d-k_1-k_2-1}{c-1}-1\\
    =&\tbinom{c+d}{d} +\sum_{j=2d-k_1-k_2+1}^{d-1} \tbinom{c-1+j}{j} +\sum_{j=1}^{2d-k_1-k_2} \tbinom{c-2+j}{j}\\
    &\geq \tbinom{c+d}{d} +\sum_{j=d-k_3}^{d-1} \tbinom{c-1+j}{j} +\sum_{j=1}^{d-k_3-1} \tbinom{c-2+j}{j}
    \geq \sum_{j=1}^{d}\tbinom{c_j+j}{j}
    =\sum_{j=1}^{d}\tbinom{a_j+j}{j}+ \sum_{j=1}^{d}\tbinom{b_j+j}{j}\\
    &\geq \sum_{j=d-k_1+1}^{d}\tbinom{c-1+j}{j} + \sum_{j=d-k_2+1}^{d}\tbinom{c-1+j}{j}
    =\tbinom{c+d}{c}-\tbinom{c+d-k_1}{c}+ \tbinom{c+d}{c}-\tbinom{c+d-k_2}{c}\\
    &\geq 2\tbinom{c+d}{c}- \tbinom{c+2d-k_1-k_2}{c}-\tbinom{c}{c}.
    \end{align*}
    This implies that
    $$\tbinom{c+d-1}{c}+\tbinom{c+2d-k_1-k_2-1}{c-1}
    \geq \tbinom{c+d}{c}.$$
    From here, we see that $\binom{c+2d-k_1-k_2-1}{c-1}\geq \binom{c+d-1}{c-1}$, that is, $k_1+k_2\leq d$. This contradicts the fact that we are in Case 5C. Therefore, $k_3\geq k_1+k_2-d$. We conclude that
    $$(k_1+k_2-k_3)+\frac{2d-k_1-k_2}{q}< (d)+1\leq q.$$
\end{itemize}
This completes the induction step with respect to $d$ and hence completes the proof.
\end{proof}

\begin{proof}[Proof of Lemma \ref{Lem: sum of Hrk}]
If $d=1$, then all $H_{r_k}(d-1,m-k+1)=0$, so there is nothing to prove. Now assume $d\geq 2$.
First, we want to show that if 
$w_{r_k}(d-1,m-k+1)=(a_1, \dots, a_{m-k+2})$, then $a_1= \dots= a_{l-k}=0$. Notice that the largest element (by lexicographical ordering) $(b_1, \dots, b_{m-k+2}) \in \Omega(d-1, m-k+1)$ with $b_1= \dots= b_{l-k}=0$ is the element $(0, \dots, 0, d-1, 0, \dots, 0)$, where $b_{l-k+1}=d-1$. By Lemma \ref{r as function of alpha}, this is the $1+\sum_{j=1}^{l-k}\binom{m+d-j-k}{d-2}$th largest element of $\Omega(d-1, m-k+1)$.
Therefore, showing that $a_1= \dots=a_{l-k}=0$ is equivalent to showing that $r_k > \sum_{j=1}^{l-k}\binom{m+d-j-k}{d-2}$.
Now, since $r_j \leq \binom{m+d-j}{d-1}$ and
\[\sum r_j >\tbinom{m+d}{d}-\tbinom{m+d+1-l}{d}=\sum_{k=1}^{l-1}\tbinom{m+d-k}{d-1},\]
we have
\begin{align*}
    r_k &=\sum_{j=1}^l r_j-\sum_{j=1,j \neq k}^lr_j
    >\sum_{j=1}^{l-1}\tbinom{m+d-j}{d-1}-\sum_{j=1,j \neq k}^l\tbinom{m+d-j}{d-1}\\
    &=\tbinom{m+d-k}{d-1}-\tbinom{m+d-l}{d-1}
    =\sum_{j=1}^{l-k}\tbinom{m+d-k-j}{d-2}.
\end{align*}
Thus, $w_{r_k}(d-1,m-k+1)$ has $a_1= \dots= a_{l-k}=0$. Consequently,
$$H_{r_k}(d-1,m-k+1)=H_{r_k-\sum_{j=1}^{l-k}\binom{m+d-k-j}{d-2}}(d-1,m-l+1).$$
Now, we let $s_k=r_k-\sum_{j=1}^{l-k}\tbinom{m+d-k-j}{d-2}$, so
$$0< s_k \leq \tbinom{m+d-k}{d-1}-\sum_{j=1}^{l-k}\tbinom{m+d-k-j}{d-2} =\tbinom{m+d-l}{d-1}.$$
Next, we also have
\begin{align*}
\sum_{k=1}^l s_k
&=\sum_{k=1}^l r_k-\sum_{k=1}^{l-1}\sum_{j=k+1}^l\tbinom{m+d-j}{d-2} 
=\sum_{k=1}^l r_k- \sum_{k=1}^{l-1} \Big( \tbinom{m+d-k}{d-1}-\tbinom{m+d-l}{d-1}\Big)\\
&=(l-1)\tbinom{m+d-l}{d-1}+\sum_{k=1}^l r_k -\sum_{k=1}^{l-1} \tbinom{m+d-k}{d-1}\\
&=(l-1)\tbinom{m+d-l}{d-1}+\sum_{k=1}^l r_k -\tbinom{m+d}{d} +\tbinom{m+d-l+1}{d}.
\end{align*}
Therefore, we have
$$(l-1)\tbinom{m+d-l}{d-1}<\sum_{k=1}^ls_k \leq l\tbinom{m+d-l}{d-1}.$$
Thus, by Lemma \ref{Lem: Sum of Hsk(d,m)}, we have
$$\sum_{k=1}^{l}H_{s_k}(d-1,m-l+1)\leq H_{\sum_{k=1}^l s_k-(l-1)\binom{m+d-l}{d-1}}(d-1,m-l+1).$$
But we just saw that $\sum_{k=1}^l s_k-(l-1)\binom{m+d-l}{d-1}=\sum_{k=1}^l r_k -\binom{m+d}{d} +\binom{m+d-l+1}{d}=r'$. This completes the proof.
\end{proof}

\subsection{Lemmas for Section \ref{sec: proof for incomplete gdc}}

In this subsection we prove several lemmas that were used to prove Theorem \ref{Thm l=1 with e}.

Recall from the introduction that $$\Omega(d,m)=\Big\{(\gamma_1,\dots,\gamma_{m+1})\in \N^{m+1}\mid \sum_{i=1}^{m+1} \gamma_i=d\Big\}.$$
and $\omega_r(d,m)=(\beta_1,\dots,\beta_{m+1})$ is the $r^{th}$ largest element under lexicographical ordering in $\Omega(d,m)$.
\begin{lemma}\label{bound on betaone}
Let $(\beta_1, \dots,\beta_{m+1})$ be the $r^{th}$ largest element of $\Omega(d-1,m)$. Then we have $\beta_1 \geq k$ if and only if $r \leq \binom{m+d-1-k}{d-1-k}$.
\end{lemma}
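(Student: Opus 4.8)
The plan is to reduce the statement to a single counting fact: among the elements of $\Omega(d-1,m)$, exactly $\tbinom{m+d-1-k}{d-1-k}$ have first coordinate at least $k$, and under the lexicographic order these are precisely the largest ones. Once this is established, the $r$-th largest element has $\beta_1 \geq k$ if and only if its rank $r$ does not exceed this count, which is exactly the claimed equivalence.

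First I would record the behavior of the order: comparing two tuples lexicographically begins with the first coordinate, so an element with strictly larger first coordinate is strictly larger. Hence every element of $\Omega(d-1,m)$ with $\gamma_1 \geq k$ precedes (in the largest-first order) every element with $\gamma_1 < k$, and therefore the elements with $\gamma_1 \geq k$ form an initial segment of the ordering — they are exactly the top $N_k$ elements, where $N_k$ denotes their number.

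Next I would compute $N_k$ by a stars-and-bars count. Writing $\gamma_1 = k + \gamma_1'$ with $\gamma_1' \in \N$, a tuple $(\gamma_1,\dots,\gamma_{m+1}) \in \Omega(d-1,m)$ with $\gamma_1 \geq k$ corresponds bijectively to a tuple $(\gamma_1',\gamma_2,\dots,\gamma_{m+1}) \in \N^{m+1}$ with $\gamma_1' + \gamma_2 + \dots + \gamma_{m+1} = d-1-k$. The number of such tuples is $\tbinom{m+d-1-k}{m} = \tbinom{m+d-1-k}{d-1-k}$, so $N_k = \tbinom{m+d-1-k}{d-1-k}$, with the convention that this binomial is $0$ when $d-1-k < 0$. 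Combining the two observations, $(\beta_1,\dots,\beta_{m+1})$ is the $r$-th largest element, and it satisfies $\beta_1 \geq k$ precisely when it lies among the first $N_k$ elements, i.e. when $r \leq \tbinom{m+d-1-k}{d-1-k}$.

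The argument is entirely elementary, so there is no substantial obstacle; the only points requiring care are the direction of the lexicographic order and the degenerate ranges of $k$. I would check that these edge cases are consistent with the equivalence: when $k > d-1$ the binomial vanishes and $\beta_1 \geq k$ can never hold since $\gamma_1 \leq d-1$ and $r \geq 1$, while when $k \leq 0$ the inequality $\beta_1 \geq k$ always holds and $\tbinom{m+d-1-k}{d-1-k}$ is at least $|\Omega(d-1,m)| = \tbinom{m+d-1}{d-1} \geq r$.
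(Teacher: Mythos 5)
Your proof is correct and follows essentially the same route as the paper: the paper phrases it by noting that $\beta_1\geq k$ iff $\beta\geq_{\text{lex}}(k,0,\dots,0,d-1-k)$ and that this element has rank $\binom{m+d-1-k}{d-1-k}$, which is exactly your observation that the tuples with first coordinate $\geq k$ form an initial segment of size $\binom{m+d-1-k}{d-1-k}$ by stars and bars. Your write-up just makes the count and the edge cases more explicit.
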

\begin{proof}
Denote $\beta=(\beta_1,\dots,\beta_{m+1})$ and $\gamma=(k,0,\dots,0,d-1-k)\in\Omega(d-1,m)$. Note that we have $\beta_1\geq k$ if and only if $\beta\geq_{\text{lex}} \gamma$. Further, notice that $\gamma$ is the $\binom{m+d-1-k}{d-1-k}^{th}$ largest element of $\Omega(d-1,m)$.
\end{proof}

By convention $\binom{m-1}{-1}=0$.

\begin{lemma}\label{Hrdm Hrdm-1}
Given $d,m\geq 1$, $0\leq e\leq d-2$, $\binom{m+e}{e}<r\leq \binom{m+d-1}{d-1}$ and $t\leq \binom{m+e-1}{e-1}$, we have
$$H_{r}(d-1,m)\geq q H_{r-t}(d-1,m-1).$$
\end{lemma}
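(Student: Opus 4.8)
The plan is to reduce the inequality to a lexicographic comparison of truncated exponent vectors, and then to a short counting identity.

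Write $\omega_r(d-1,m)=(\beta_1,\dots,\beta_{m+1})$ and $\omega_{r-t}(d-1,m-1)=(\beta_1',\dots,\beta_m')$, so that $H_r(d-1,m)=\sum_{i=1}^m\beta_iq^{m-i}$ and $qH_{r-t}(d-1,m-1)=\sum_{i=1}^{m-1}\beta_i'q^{m-i}$. Assuming, as holds in every application of this lemma, that $q\geq d$, all entries are at most $d-1\leq q-1$, so these are genuine base-$q$ digit strings whose numerical order is governed by the lexicographic order of the digits. Hence it suffices to prove the prefix inequality
$$(\beta_1,\dots,\beta_{m-1})\geq_{\text{lex}}(\beta_1',\dots,\beta_{m-1}');$$
for then $\sum_{i=1}^{m-1}\beta_iq^{m-i}\geq\sum_{i=1}^{m-1}\beta_i'q^{m-i}$, and adding the nonnegative leftover term $\beta_mq^0$ yields $H_r(d-1,m)\geq qH_{r-t}(d-1,m-1)$.

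To prove the prefix inequality I would pass to projections. A tuple of $\Omega(d-1,m)$ is determined by its first $m$ coordinates, so if we group tuples by their length-$(m-1)$ prefix $p=(\gamma_1,\dots,\gamma_{m-1})$ (with $|p|=\sum_{i=1}^{m-1}\gamma_i\leq d-1$), the groups occur in lexicographic order of $p$, and the group of $p$ consists of exactly $d-|p|$ consecutive tuples. Identifying each such prefix $p$ with the tuple $(\gamma_1,\dots,\gamma_{m-1},d-1-|p|)\in\Omega(d-1,m-1)$ — whose last coordinate is the slack $d-1-|p|$ — this lexicographic listing of prefixes $p^{(1)}>_{\text{lex}}p^{(2)}>_{\text{lex}}\cdots$ is exactly the lexicographic listing of $\Omega(d-1,m-1)$. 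The prefix of the $r$-th largest element of $\Omega(d-1,m)$ is then $p^{(N)}$ with $N$ minimal such that $\sum_{i=1}^N(d-|p^{(i)}|)\geq r$, while the prefix of $\omega_{r-t}(d-1,m-1)$ is $p^{(r-t)}$. Thus the prefix inequality is equivalent to $N\leq r-t$, i.e. $\sum_{i=1}^{r-t}(d-|p^{(i)}|)\geq r$. Writing $\gamma_m^{(i)}=d-1-|p^{(i)}|$ for the last coordinate of the $i$-th largest element of $\Omega(d-1,m-1)$, this reads $\sum_{i=1}^{r-t}(1+\gamma_m^{(i)})\geq r$, that is
$$\sum_{i=1}^{r-t}\gamma_m^{(i)}\geq t.$$

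It remains to prove this last inequality, which is the heart of the matter. From $\binom{m+e}{e}<r$ and $t\leq\binom{m+e-1}{e-1}$ together with Pascal's identity $\binom{m+e}{e}=\binom{m+e-1}{e}+\binom{m+e-1}{e-1}$ we get $r-t>\binom{m+e-1}{e}$. Since each summand $\gamma_m^{(i)}\geq 0$, the partial sums are non-decreasing, so it suffices to bound the sum over the top $\binom{m+e-1}{e}$ elements. By Lemma~\ref{bound on betaone} (applied to $\Omega(d-1,m-1)$), these are exactly the tuples with first coordinate $\gamma_1\geq d-1-e$. Grouping them by $\gamma_1=d-1-s$ for $0\leq s\leq e$, the remaining $m-1$ coordinates range over all tuples summing to $s$, and by symmetry the sum of the last coordinate over these equals $\tfrac{s}{m-1}\binom{s+m-2}{m-2}=\binom{s+m-2}{m-1}$; the hockey-stick identity then gives $\sum_{s=0}^e\binom{s+m-2}{m-1}=\binom{m+e-1}{m}=\binom{m+e-1}{e-1}$. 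Hence $\sum_{i=1}^{r-t}\gamma_m^{(i)}\geq\binom{m+e-1}{e-1}\geq t$, as required.

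I expect the main obstacle to be the combinatorial bookkeeping in the middle step: correctly matching the lexicographic order on $\Omega(d-1,m)$ with the "prefix-with-multiplicity $d-|p|$" picture, and recognizing that this slack is precisely the last coordinate of the associated element of $\Omega(d-1,m-1)$. Once that translation is in place, the final step is a clean symmetry-plus-hockey-stick computation, and the degenerate cases (such as $e=0$, where $t=0$ and the claim is trivial, or small $m$, where the omitted $N$-th term may contribute a positive last coordinate) are harmless because every term dropped is non-negative.
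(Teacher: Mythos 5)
Your argument is correct, and it arrives at the same key quantity $\tbinom{m+e-1}{e-1}$ as the paper, but by a genuinely different decomposition. The paper's proof embeds $\Omega(d-1,m-1)$ into $\Omega(d-1,m)$ via the order-preserving injection $\psi_m$ (appending a zero), locates the first index $s_0\geq r$ whose element lies in $\im(\psi_m)$, and bounds the preimage rank $k$ by $r-\tbinom{m+e-1}{e-1}$ by directly parametrizing the elements among the top $\tbinom{m+e}{e}$ that have $\gamma_{m+1}\geq 1$. You instead fiber $\Omega(d-1,m)$ over $\Omega(d-1,m-1)$ by truncation to the first $m-1$ coordinates, with fiber sizes $1+\gamma_m^{(i)}$, which converts the lemma into the clean weighted count $\sum_{i=1}^{r-t}\gamma_m^{(i)}\geq t$, evaluated by symmetry plus the hockey-stick identity; these two counts are two slicings of the same set, but your route avoids the index chase through $s_0$ and $k$ and makes the role of the last coordinate transparent. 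Two minor caveats, neither of which separates you from the paper: you correctly note that the reduction to lexicographic comparison of digit strings needs $q\geq d$ (the paper's assertion that $\phi_m$ preserves ordering needs the same hypothesis, which holds in every application), and both proofs silently pass over the degenerate cases $m=1$ and $r-t>\tbinom{m+d-2}{d-1}$, where $H_{r-t}(d-1,m-1)=0$ by convention and the inequality is trivial; it would be worth one sentence in your write-up to dispose of these before invoking the symmetry computation, which divides by $m-1$.
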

\begin{proof}
Fix $d$. Recall that
$$\Omega(d-1,m)=\Big\{(\gamma_1,\dots,\gamma_{m+1})\in \N^{m+1}\mid \sum_{i=1}^{m+1} \gamma_i=d-1\Big\}.$$
It is ordered according to lexicographical ordering and its $r^{th}$ largest element is $w_r(d-1,m)$.
Consider the map $\phi_{m}:\Omega(d-1,m)\to \N$ defined by $\phi_m((\gamma_1,\dots,\gamma_{m+1}))=\sum_{i=1}^{m}\gamma_i q^{m-i}$. This means that $H_r(d-1,m)=\phi_m(w_r(d-1,m))$. In addition, note that the map $\phi_m$ preserves ordering.
Consider the map $\psi_{m}:\Omega(d-1,m-1)\to \Omega(d-1,m)$, given by $\psi_{m}(\gamma_1,\dots,\gamma_{m})=(\gamma_1,\dots,\gamma_m,0)$. This is an injective map, and it preserves ordering. Note that for $w\in \Omega(d-1,m-1)$, we have $\phi_m(\psi_m(w))\geq q \phi_{m-1}(w)$.

Now, let $s_0=\min\{s\geq r\mid w_s(d-1,m)\in \im(\psi_m)\}$. Suppose $w_{s_0}(d-1,m)=\psi_m(w_k(d-1,m-1))$. Then
$$s_0=k+|\{1\leq a\leq s_0\mid w_a(d-1,m)\notin \im(\psi_m)\}|.$$
However, by the definition of $s_0$, for $r \leq l \leq s_0-1$, we have $w_l(d-1,m)\notin \im(\psi_m)$. Thus,
\begin{align*}
&|\{1\leq a\leq s_0\mid w_a(d-1,m)\notin \im(\psi_m)\}|\\
&=s_0-r + |\{1\leq a< r\mid w_a(d-1,m)\notin \im(\psi_m)\}|\\
&\geq s_0-r+ |\{1\leq a \leq \tbinom{m+e}{e}\mid w_a(d-1,m)\notin \im(\psi_m)\}|.
\end{align*}

Now we want to count $|S|$ for 
$$S=\{w_a(d-1,m) \mid 1\leq a\leq \tbinom{m+e}{e}, w_a(d-1,m)\notin \im(\psi_m)\}.$$
By Lemma \ref{bound on betaone}, we know that $(\gamma_1,\dots,\gamma_{m+1}) \in S$ if and only if the following hold:
\begin{itemize}
    \item $\sum_{i=1}^{m+1}\gamma_i=d-1$;
    \item $a \leq \binom{m+e}{e}$. This happens if and only if $\gamma_1 \geq d-1-e$;
    \item $\gamma_{m+1} \geq 1$.
\end{itemize}
Such elements are given by $((d-1-e)+a_1,a_2 \dots,a_m, a_{m+1}+1)$, with $a_i \geq 0$ and $\sum_{i=1}^{m+1}a_i=e-1$. The number of solutions is $\binom{m+e-1}{e-1}$.
Thus, 
\begin{align*}
    s_0 \geq k+(s_0-r)+ |S|= k+(s_0-r)+ \tbinom{m+e-1}{e-1}
\end{align*}
Meaning, $k \leq r-\binom{m+e-1}{e-1}$. Since $t \leq \tbinom{m+e-1}{e-1}$, we have $k \leq r-t$. Thus, we have
\begin{align*}
    H_r(d-1,m) & \geq H_{s_0}(d-1,m) = \phi_{m}(w_{s_0}(d-1,m)) \\
    &=\phi_m(\psi_m(w_k(d-1,m-1)))
     \geq q\phi_{m-1}(w_k(d-1,m-1)) \\
    &=qH_k(d-1,m-1) \geq qH_{r-t}(d-1,m-1).\qedhere
\end{align*}
\end{proof}

\begin{lemma}\label{relation Hr}
If $\binom{m+e}{e} < r \leq \binom{m+e+1}{e+1}$, then
$$H_r(d-1,m)-H_{r-\binom{m+e}{e}}(d-1,m-1)=(d-2-e)q^{m-2}(q-1).$$
\end{lemma}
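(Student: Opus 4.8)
The plan is to determine the explicit shape of the tuples $w_r(d-1,m)$ and $w_{r'}(d-1,m-1)$, where I abbreviate $r'=r-\binom{m+e}{e}$, and then read the difference of the two $H$-values straight off the defining formula $H_s(d-1,n)=\sum_{i=1}^{n}(\text{$i$th coordinate of }w_s(d-1,n))\,q^{n-i}$.

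First I would pin down the leading coordinate of $w_r(d-1,m)=(\beta_1,\dots,\beta_{m+1})$. Lemma~\ref{bound on betaone} says $\beta_1\geq k$ iff $r\leq\binom{m+d-1-k}{d-1-k}$; taking $k=d-1-e$ and $k=d-2-e$ and invoking the hypothesis $\binom{m+e}{e}<r\leq\binom{m+e+1}{e+1}$ forces $\beta_1=d-2-e$. The same counting shows that exactly $\binom{m+e}{e}$ elements of $\Omega(d-1,m)$ have first coordinate strictly larger than $d-2-e$, so $w_r(d-1,m)$ is the $r'$th largest element among those whose first coordinate equals $d-2-e$. By Pascal's identity $1\leq r'\leq\binom{m+e+1}{e+1}-\binom{m+e}{e}=\binom{m+e}{e+1}=|\Omega(e+1,m-1)|$, which is the range I will need below.

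Next I would exhibit two order-preserving bijections with $\Omega(e+1,m-1)$. Deleting the first coordinate identifies the elements of $\Omega(d-1,m)$ having first coordinate $d-2-e$ with all of $\Omega(e+1,m-1)$, in a way that respects lexicographic order, so $(\beta_2,\dots,\beta_{m+1})=w_{r'}(e+1,m-1)=:(\delta_1,\dots,\delta_m)$. On the other side, Lemma~\ref{bound on betaone} (with $m$ replaced by $m-1$) shows that exactly $\binom{m+e}{e+1}$ elements of $\Omega(d-1,m-1)$ have first coordinate $\geq d-2-e$; since adding $d-2-e$ to the first coordinate is an order-preserving bijection from $\Omega(e+1,m-1)$ onto precisely these top elements, and $r'\leq\binom{m+e}{e+1}$, I obtain $w_{r'}(d-1,m-1)=(\delta_1+(d-2-e),\delta_2,\dots,\delta_m)$.

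Finally I would substitute. Both tuples are assembled from the same core $(\delta_1,\dots,\delta_m)$: namely $w_r(d-1,m)=(d-2-e,\delta_1,\dots,\delta_m)$ and $w_{r'}(d-1,m-1)=(\delta_1+d-2-e,\delta_2,\dots,\delta_m)$. Plugging these into the definition of $H$, every $\delta_i$ contribution cancels, and the difference telescopes to $(d-2-e)q^{m-1}-(d-2-e)q^{m-2}=(d-2-e)q^{m-2}(q-1)$, as claimed. I expect the only delicate points to be the coordinate bookkeeping---keeping track that $\Omega(d,m)$ has $m+1$ entries, so that deleting or adjoining a coordinate lands in $\Omega(e+1,m-1)$ with the correct number of variables---and the Pascal estimate confirming that $r'$ stays in the common range where both bijections are valid.
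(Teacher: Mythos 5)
Your proof is correct and follows essentially the same route as the paper's: both pin down $\beta_1=d-2-e$ via Lemma~\ref{bound on betaone}, identify $w_{r-\binom{m+e}{e}}(d-1,m-1)$ as the tuple obtained from $w_r(d-1,m)$ by folding the leading $d-2-e$ into the next coordinate, and then read off the difference by telescoping. The only cosmetic difference is that you justify the rank shift of $\binom{m+e}{e}$ by factoring through two order-preserving bijections with $\Omega(e+1,m-1)$, whereas the paper counts index tuples $(i_1,\dots,i_{e+1})$ directly via an explicit injection.
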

\begin{proof}
Let $w_r(d-1,m)=(\beta_1,\dots,\beta_{m+1})$. Since $\binom{m+e}{e}< r \leq \binom{m+e+1}{e+1}$, we have $\beta_1=d-2-e$. Denote $e_i=(0,\dots,0,1,0,\dots,0)$ with $1$ in $i^{th}$ spot.
Let $(i_1 ,i_2, \dots,i_{e+1})$ be the tuple for which $w_r(d-1,m)=(d-2-e)e_1+e_{i_1}+ \dots +e_{i_{e+1}}$ and $2 \leq i_1 \leq i_2 \leq \dots \leq i_{e+1} \leq m+1$. By definition of $w_r(d-1,m)$, there are $r$ elements of $\Omega(d-1,m)$ that are $\geq$ $w_r(d-1,m)$. Let
$$S_1=\{(j_1, \dots,j_{e+1}\mid  1 \leq j_1 \leq \dots \leq j_{e+1} \leq m+1, (j_1, \dots, j_{e+1} )\leq_{\text{lex}} (i_1, \dots, i_{e+1})\},$$
so $r=|S_1|$.

Next, denote $w_k(d-1,m-1)=(d-2-e)e_1+e_{i_1-1}+ \dots +e_{i_{e+1}-1}$. We want to show that
$r-k=\tbinom{m+e}{e}$.
As before let 
$$S_2=\{(a_1, \dots,a_{e+1}) \mid 1 \leq a_1 \leq \dots \leq a_{e+1} \leq m, (a_1, \dots, a_{e+1} )\leq_{\text{{lex}}} (i_1-1, \dots, i_{e+1}-1)\},$$
so $k=|S_2|$. 
Notice that we have an injection $f:S_2 \to S_1$ defined by
$$f(a_1, \dots,a_{e+1})=(a_1+1, \dots,a_{e+1}+1).$$
Thus, $r-k=|S_1\setminus f(S_2)|$. For $(j_1, \dots, j_{e+1}) \in S_1$, note that $(j_1, \dots, j_{e+1})\notin f(S_2)$ if and only if $j_1=1$. Therefore, since $i_2\geq 2$, we have 
\begin{align*}
&|S_1\setminus f(S_2)|\\
&=|\{(1,j_2 \dots,j_{e+1}\mid  1 \leq j_2 \leq \dots \leq j_{e+1} \leq m+1, (1,j_2, \dots, j_{e+1} )\leq_{\text{lex}} (i_1, \dots, i_{e+1})\}|\\
&=|\{(j_2, \dots,j_{e+1})\mid 1 \leq j_2 \leq \dots \leq j_{e+1} \leq m+1\}|
=\tbinom{m+e}{e}.
\end{align*}
Therefore, $k=r-\binom{m+e}{e}$. 
Finally we see that,
\begin{align*}
&H_r(d-1,m)-H_{r-\binom{m+e}{e}}(d-1,m-1)
=H_r(d-1,m)-H_{k}(d-1,m-1)\\
&=\Big((d-2-e)q^{m-1}+\sum_{u=1}^{e+1}\lfloor q^{m-i_u}\rfloor\Big) - \Big((d-2-e)q^{(m-1)-1}+\sum_{u=1}^{e+1}\lfloor q^{(m-1)-(i_u-1)}\rfloor\Big)\\
&=(d-2-e)(q^{m-1}-q^{m-2}).\qedhere
\end{align*}
\end{proof}

\begin{lemma}\label{Lem d-c to d}
Given $m\geq 1$, $1\leq c\leq d-1$ and $1\leq r\leq \binom{m+d-c}{d-c}$, we have
$$H_r(d-c,m)+cq^{m-1}\leq H_r(d,m).$$
\end{lemma}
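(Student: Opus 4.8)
The plan is to identify $w_r(d,m)$ explicitly with a shift of $w_r(d-c,m)$ and then read off the difference of the two $H_r$ values. The device is the map
$\psi\colon\Omega(d-c,m)\to\Omega(d,m)$ defined by
$\psi(\gamma_1,\dots,\gamma_{m+1})=(\gamma_1+c,\gamma_2,\dots,\gamma_{m+1})$,
which adds $c$ to the first coordinate. First I would record its basic properties: $\psi$ is injective; its image is exactly the set of tuples $(\delta_1,\dots,\delta_{m+1})\in\Omega(d,m)$ with $\delta_1\geq c$; and $\psi$ preserves the lexicographic ordering, since if $\gamma>_{\text{lex}}\gamma'$ then either $\gamma_1>\gamma_1'$, whence $\gamma_1+c>\gamma_1'+c$, or $\gamma_1=\gamma_1'$ and the comparison is decided by the (unchanged) tails.

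The crucial observation is that the image of $\psi$ forms an initial segment of $\Omega(d,m)$ under $\geq_{\text{lex}}$. Indeed, any tuple with first coordinate $\geq c$ is lexicographically larger than any tuple with first coordinate $\leq c-1$, because the comparison is already settled at the first coordinate. Counting these tuples, the number of $(\delta_1,\dots,\delta_{m+1})\in\Omega(d,m)$ with $\delta_1\geq c$ equals $|\Omega(d-c,m)|=\binom{m+d-c}{d-c}$ (substitute $\delta_1=\delta_1'+c$ with $\delta_1'\geq 0$). Hence the $\binom{m+d-c}{d-c}$ largest elements of $\Omega(d,m)$ are precisely the elements of the image of $\psi$, and since $\psi$ is order-preserving they appear in the same order as the elements of $\Omega(d-c,m)$.

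Consequently, for every $r$ with $1\leq r\leq\binom{m+d-c}{d-c}$, the $r$-th largest element of $\Omega(d,m)$ is the $\psi$-image of the $r$-th largest element of $\Omega(d-c,m)$. Writing $w_r(d-c,m)=(\beta_1,\dots,\beta_{m+1})$, this gives $w_r(d,m)=(\beta_1+c,\beta_2,\dots,\beta_{m+1})$. I would then substitute into $H_r(d,m)=\sum_{i=1}^m\beta_i q^{m-i}$: since $\psi$ modifies only the first coordinate, whose weight is $q^{m-1}$, the two sums differ in exactly that term, so that $H_r(d,m)-H_r(d-c,m)=c\,q^{m-1}$. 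This yields the desired inequality, in fact with equality.

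I do not anticipate a genuine obstacle here; the entire argument is structural rather than computational. The one point requiring care is the claim that the tuples with first coordinate $\geq c$ constitute an initial segment in the lexicographic order, as this is precisely what lets one transport the index $r$ through $\psi$ and identify $w_r(d,m)$ with the shift of $w_r(d-c,m)$. Note that the hypothesis $1\leq c\leq d-1$ guarantees $d-c\geq 1$, so that $\Omega(d-c,m)$ and $w_r(d-c,m)$ are defined, and that the range $r\leq\binom{m+d-c}{d-c}$ is exactly what keeps $r$ within the initial segment on which the identification is valid.
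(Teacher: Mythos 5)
Your proof is correct and uses essentially the same device as the paper: the order-preserving injection $\psi$ that adds $c$ to the first coordinate of elements of $\Omega(d-c,m)$. The paper stops at the weaker observation that $\psi(w_r(d-c,m))=w_k(d,m)$ for some $k\geq r$, which already gives the stated inequality via monotonicity of $H_{\bullet}(d,m)$, whereas your initial-segment argument pins down $k=r$ and hence yields the stronger conclusion $H_r(d,m)-H_r(d-c,m)=c\,q^{m-1}$ with equality.
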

\begin{proof}
Consider the map $f$ from $\Omega(d-c,m)$ to $\Omega(d,m)$, given by $f(\gamma_1,\dots,\gamma_{m+1})=(c+\gamma_1,\gamma_2,\dots,\gamma_{m+1})$. This map is injective and preserves order. Suppose $f(w_r(d-c,m))=w_k(d,m)$. Then $k$ is the number of elements of $\Omega(d,m)$ that are at least $w_k(d,m)$, so $k\geq r$. Therefore, we see that
\[H_r(d-c,m)+cq^{m-1}=H_k(d,m)\leq H_r(d,m).\qedhere\]
\end{proof}

\backmatter

\bmhead{Acknowledgments}
We thank Sudhir Ghorpade for introducing us to this problem. We thank Nathan Kaplan for many helpful discussions about the problem. The first author received support from the NSF Grant DMS 2154223.

\section*{Statements and Declarations}
Competing interests:
The authors have no competing interests.






\bibliography{sn-bibliography}


\begin{thebibliography}{15}
\ifx \bisbn   \undefined \def \bisbn  #1{ISBN #1}\fi
\ifx \binits  \undefined \def \binits#1{#1}\fi
\ifx \bauthor  \undefined \def \bauthor#1{#1}\fi
\ifx \batitle  \undefined \def \batitle#1{#1}\fi
\ifx \bjtitle  \undefined \def \bjtitle#1{#1}\fi
\ifx \bvolume  \undefined \def \bvolume#1{\textbf{#1}}\fi
\ifx \byear  \undefined \def \byear#1{#1}\fi
\ifx \bissue  \undefined \def \bissue#1{#1}\fi
\ifx \bfpage  \undefined \def \bfpage#1{#1}\fi
\ifx \blpage  \undefined \def \blpage #1{#1}\fi
\ifx \burl  \undefined \def \burl#1{\textsf{#1}}\fi
\ifx \doiurl  \undefined \def \doiurl#1{\url{https://doi.org/#1}}\fi
\ifx \betal  \undefined \def \betal{\textit{et al.}}\fi
\ifx \binstitute  \undefined \def \binstitute#1{#1}\fi
\ifx \binstitutionaled  \undefined \def \binstitutionaled#1{#1}\fi
\ifx \bctitle  \undefined \def \bctitle#1{#1}\fi
\ifx \beditor  \undefined \def \beditor#1{#1}\fi
\ifx \bpublisher  \undefined \def \bpublisher#1{#1}\fi
\ifx \bbtitle  \undefined \def \bbtitle#1{#1}\fi
\ifx \bedition  \undefined \def \bedition#1{#1}\fi
\ifx \bseriesno  \undefined \def \bseriesno#1{#1}\fi
\ifx \blocation  \undefined \def \blocation#1{#1}\fi
\ifx \bsertitle  \undefined \def \bsertitle#1{#1}\fi
\ifx \bsnm \undefined \def \bsnm#1{#1}\fi
\ifx \bsuffix \undefined \def \bsuffix#1{#1}\fi
\ifx \bparticle \undefined \def \bparticle#1{#1}\fi
\ifx \barticle \undefined \def \barticle#1{#1}\fi
\bibcommenthead
\ifx \bconfdate \undefined \def \bconfdate #1{#1}\fi
\ifx \botherref \undefined \def \botherref #1{#1}\fi
\ifx \url \undefined \def \url#1{\textsf{#1}}\fi
\ifx \bchapter \undefined \def \bchapter#1{#1}\fi
\ifx \bbook \undefined \def \bbook#1{#1}\fi
\ifx \bcomment \undefined \def \bcomment#1{#1}\fi
\ifx \oauthor \undefined \def \oauthor#1{#1}\fi
\ifx \citeauthoryear \undefined \def \citeauthoryear#1{#1}\fi
\ifx \endbibitem  \undefined \def \endbibitem {}\fi
\ifx \bconflocation  \undefined \def \bconflocation#1{#1}\fi
\ifx \arxivurl  \undefined \def \arxivurl#1{\textsf{#1}}\fi
\csname PreBibitemsHook\endcsname

\bibitem[\protect\citeauthoryear{Beelen et~al.}{2022}]{beelen2022combinatorial}
\begin{barticle}
\bauthor{\bsnm{Beelen}, \binits{P.}},
\bauthor{\bsnm{Datta}, \binits{M.}},
\bauthor{\bsnm{Ghorpade}, \binits{S.}}:
\batitle{A combinatorial approach to the number of solutions of systems of homogeneous polynomial equations over finite fields}.
\bjtitle{Moscow Mathematical Journal}
\bvolume{22}(\bissue{4}),
\bfpage{565}--\blpage{593}
(\byear{2022})
\end{barticle}
\endbibitem

\bibitem[\protect\citeauthoryear{Lachaud and Rolland}{2015}]{lachaud2015number}
\begin{barticle}
\bauthor{\bsnm{Lachaud}, \binits{G.}},
\bauthor{\bsnm{Rolland}, \binits{R.}}:
\batitle{On the number of points of algebraic sets over finite fields}.
\bjtitle{Journal of Pure and Applied Algebra}
\bvolume{219}(\bissue{11}),
\bfpage{5117}--\blpage{5136}
(\byear{2015})
\end{barticle}
\endbibitem

\bibitem[\protect\citeauthoryear{Beelen et~al.}{2018}]{beelen2018maximum}
\begin{barticle}
\bauthor{\bsnm{Beelen}, \binits{P.}},
\bauthor{\bsnm{Datta}, \binits{M.}},
\bauthor{\bsnm{Ghorpade}, \binits{S.}}:
\batitle{Maximum number of common zeros of homogeneous polynomials over finite fields}.
\bjtitle{Proceedings of the American Mathematical Society}
\bvolume{146}(\bissue{4}),
\bfpage{1451}--\blpage{1468}
(\byear{2018})
\end{barticle}
\endbibitem

\bibitem[\protect\citeauthoryear{Heijnen and Pellikaan}{1998}]{heijnen1998generalized}
\begin{barticle}
\bauthor{\bsnm{Heijnen}, \binits{P.}},
\bauthor{\bsnm{Pellikaan}, \binits{R.}}:
\batitle{Generalized {Hamming} weights of q-ary {Reed-Muller} codes}.
\bjtitle{IEEE Transactions on Information Theory}
\bvolume{44}(\bissue{1}),
\bfpage{181}--\blpage{196}
(\byear{1998})
\end{barticle}
\endbibitem

\bibitem[\protect\citeauthoryear{Boguslavsky}{1997}]{boguslavsky1997number}
\begin{barticle}
\bauthor{\bsnm{Boguslavsky}, \binits{M.}}:
\batitle{On the number of solutions of polynomial systems}.
\bjtitle{Finite fields and their applications}
\bvolume{3}(\bissue{4}),
\bfpage{287}--\blpage{299}
(\byear{1997})
\end{barticle}
\endbibitem

\bibitem[\protect\citeauthoryear{Serre}{1991}]{serre1991lettre}
\begin{barticle}
\bauthor{\bsnm{Serre}, \binits{J.-P.}}:
\batitle{Lettre {\`a} {M. Tsfasman}}.
\bjtitle{Ast{\'e}risque}
\bvolume{198}(\bissue{200}),
\bfpage{351}--\blpage{353}
(\byear{1991})
\end{barticle}
\endbibitem

\bibitem[\protect\citeauthoryear{Sorensen}{1991}]{sorensen1991projective}
\begin{barticle}
\bauthor{\bsnm{Sorensen}, \binits{A.B.}}:
\batitle{Projective {Reed-Muller} codes}.
\bjtitle{IEEE Transactions on Information Theory}
\bvolume{37}(\bissue{6}),
\bfpage{1567}--\blpage{1576}
(\byear{1991})
\end{barticle}
\endbibitem

\bibitem[\protect\citeauthoryear{Zanella}{1998}]{zanella1998linear}
\begin{barticle}
\bauthor{\bsnm{Zanella}, \binits{C.}}:
\batitle{Linear sections of the finite {Veronese} varieties and authentication systems defined using geometry}.
\bjtitle{Designs, Codes and Cryptography}
\bvolume{13}(\bissue{2}),
\bfpage{199}--\blpage{212}
(\byear{1998})
\end{barticle}
\endbibitem

\bibitem[\protect\citeauthoryear{Datta and Ghorpade}{2017}]{datta2017number}
\begin{barticle}
\bauthor{\bsnm{Datta}, \binits{M.}},
\bauthor{\bsnm{Ghorpade}, \binits{S.}}:
\batitle{Number of solutions of systems of homogeneous polynomial equations over finite fields}.
\bjtitle{Proceedings of the American Mathematical Society}
\bvolume{145}(\bissue{2}),
\bfpage{525}--\blpage{541}
(\byear{2017})
\end{barticle}
\endbibitem

\bibitem[\protect\citeauthoryear{Datta and Ghorpade}{2015}]{datta2015conjecture}
\begin{barticle}
\bauthor{\bsnm{Datta}, \binits{M.}},
\bauthor{\bsnm{Ghorpade}, \binits{S.}}:
\batitle{On a conjecture of {Tsfasman} and an inequality of {Serre} for the number of points of hypersurfaces over finite fields}.
\bjtitle{Moscow Mathematical Journal}
\bvolume{15}(\bissue{4}),
\bfpage{715}--\blpage{725}
(\byear{2015})
\end{barticle}
\endbibitem

\bibitem[\protect\citeauthoryear{Datta and Ghorpade}{2017}]{datta2017remarks}
\begin{bchapter}
\bauthor{\bsnm{Datta}, \binits{M.}},
\bauthor{\bsnm{Ghorpade}, \binits{S.R.}}:
\bctitle{Remarks on the {Tsfasman-Boguslavsky} conjecture and higher weights of projective {Reed-Muller} codes}.
In: \bbtitle{Arithmetic, Geometry, Cryptography and Coding Theory},
pp. \bfpage{157}--\blpage{169}
(\byear{2017})
\end{bchapter}
\endbibitem

\bibitem[\protect\citeauthoryear{Cafure and Matera}{2006}]{cafure2006improved}
\begin{barticle}
\bauthor{\bsnm{Cafure}, \binits{A.}},
\bauthor{\bsnm{Matera}, \binits{G.}}:
\batitle{Improved explicit estimates on the number of solutions of equations over a finite field}.
\bjtitle{Finite Fields and Their Applications}
\bvolume{12}(\bissue{2}),
\bfpage{155}--\blpage{185}
(\byear{2006})
\end{barticle}
\endbibitem

\bibitem[\protect\citeauthoryear{Gathmann}{2002}]{Gathmann2002}
\begin{botherref}
\oauthor{\bsnm{Gathmann}, \binits{A.}}:
Algebraic Geometry.
Class notes, University of Kaiserslautern, available at \url{https://www.mathematik.uni-kl.de/~gathmann/class/alggeom-2002/alggeom-2002.pdf}
(2002)
\end{botherref}
\endbibitem

\bibitem[\protect\citeauthoryear{Homma and Kim}{2013}]{homma2013elementary}
\begin{barticle}
\bauthor{\bsnm{Homma}, \binits{M.}},
\bauthor{\bsnm{Kim}, \binits{S.J.}}:
\batitle{An elementary bound for the number of points of a hypersurface over a finite field}.
\bjtitle{Finite Fields and Their Applications}
\bvolume{20},
\bfpage{76}--\blpage{83}
(\byear{2013})
\end{barticle}
\endbibitem

\bibitem[\protect\citeauthoryear{Beelen}{2019}]{beelen2019note}
\begin{barticle}
\bauthor{\bsnm{Beelen}, \binits{P.}}:
\batitle{A note on the generalized {Hamming} weights of {Reed--Muller} codes}.
\bjtitle{Applicable Algebra in Engineering, Communication and Computing}
\bvolume{30},
\bfpage{233}--\blpage{242}
(\byear{2019})
\end{barticle}
\endbibitem

\end{thebibliography}

\end{document}